\def\ps@pprintTitle{%
 \let\@oddhead\@empty
 \let\@evenhead\@empty
 \def\@oddfoot{}%
 \let\@evenfoot\@oddfoot}
\newcommand{\inclpic}[1]{\includegraphics{#1}}
\definecolor{links}{rgb}{.2,.1,.5}
\definecolor{cites}{rgb}{.5,.1,.2}
\newcolumntype{d}[1]{D{.}{.}{#1} }
\theoremstyle{plain}
\newtheorem{theorem}{Theorem}
\newtheorem{lemma}[theorem]{Lemma}
\newtheorem{proposition}[theorem]{Proposition}
\newtheorem{corollary}[theorem]{Corollary}
\theoremstyle{definition}
\newtheorem{remark}[theorem]{Remark}
\newtheorem{example}[theorem]{Example}
\newtheorem{problem}[theorem]{Problem}
\newcommand\cA{{\mathcal A}}
\newcommand\cB{{\mathcal B}}
\newcommand\cG{{\mathcal G}}
\newcommand\cH{{\mathcal H}}
\newcommand\cK{{\mathcal K}}
\newcommand\cM{{\mathcal M}}
\newcommand\RR{{\mathbb R}}
\newcommand\TT{{\mathbb T}}
\newcommand{\1}{\mathbf 1}
\newcommand{\0}{\mathbf 0}
\newcommand\SetOf[2]{\left\{\left.#1\vphantom{#2}\ \right|\ #2\vphantom{#1}\right\}}
\newcommand\smallSetOf[2]{\{{#1}\,|\,{#2}\}}
\DeclareMathOperator{\conv}{conv}
\DeclareMathOperator{\pos}{pos}
\DeclareMathOperator{\supp}{supp}
\newcommand\pair[2]{({#1},{#2})}
\newcommand\triple[3]{({#1},{#2},{#3})}
\newcommand\dcup{\sqcup}  %\amalg  %{\mathbin{\dot{\cup}}} 
\newcommand\transpose[1]{{#1}^\top}
\newcommand\TPmin{\mathbb{TP}_{\min}}
\DeclareMathOperator{\tdet}{tdet}
\DeclareMathOperator{\tcone}{tcone}
\DeclareMathOperator{\Sym}{Sym}
\DeclareMathOperator{\tangentgraph}{TG}
\newcommand\envelope{{\mathcal E}} % envelope of a point configuration
\DeclareMathOperator{\Ord}{Ord} % order polytope
\newcommand\cF{{\mathcal F}}
\newcommand\Bgraph{{\rm B}} 
\newcommand\fan{\Delta}
\newcommand\typedecomp[1]{{\mathcal T}(#1)}
\newcommand\typegraph[1]{{\rm T}(#1)}
\newcommand\thalf{\operatorname{thalf}}
\newcommand\olthalf{\overline{\operatorname{thalf}}}
\newcommand\signedcell[2]{#1_{#2}} % e.g., \signedcell{\Psi}{\epsilon}
\newcommand\csec[2]{\overline{S_{#1}(#2)}}
\title{Weighted digraphs and tropical cones}
\author[tub]{Michael Joswig\corref{cor1}\fnref{fn1}}
\ead{joswig@math.tu-berlin.de}
\author[tub]{Georg Loho\corref{cor1}}
\ead{loho@math.tu-berlin.de}
\address[tub]{Institut f{\"u}r Mathematik, MA 6-2, TU Berlin, Str.\ des 17. Juni 136, 10623 Berlin, Germany}
\begin{document}
\begin{abstract}
  This paper is about the combinatorics of finite point configurations in the tropical projective space or, dually, of
  arrangements of finitely many tropical hyperplanes.  Moreover, arrangements of finitely many tropical halfspaces can
  be considered via coarsenings of the resulting polyhedral decompositions of $\RR^d$.  This leads to natural cell
  decompositions of the tropical projective space $\TPmin^{d-1}$.  Our method is to employ a known class of ordinary
  convex polyhedra naturally associated with weighted digraphs.  This way we can relate to and use results from
  combinatorics and optimization.  One outcome is the solution of a conjecture of Develin and Yu (2007).
\end{abstract}

\begin{keyword}
tropical convexity \sep directed graphs \sep regular subdivisions \sep braid cones \sep order polytopes

\MSC[2010] 14T05 \sep 52B12 \sep 05C20
\end{keyword}
\maketitle

\section{Introduction}
\noindent
The tradition of max-plus linear algebra in optimization and related areas goes back several decades; for an overview,
e.g., see Litvinov, Maslov and Shpiz \cite{LitvinovMaslovShpiz:2001}, Cohen, Gaubert and Quadrat
\cite{CohenGaubertQuadrat:2004} or Butkovi\v{c} \cite{Butkovic:10} and their references.  Develin and Sturmfels connected
max-plus linear algebra under the name of \emph{tropical convexity} to geometric combinatorics in their landmark
paper~\cite{DevelinSturmfels:2004}; see also \cite[Chapter~5]{TropicalBook}.  This line of research has been continued
in \cite{MJ:2005}, \cite{DevelinYu:2007}, \cite{ArdilaDevelin:2009}, \cite{FinkRincon:1305.6329} and other references.
The interest in a more geometric perspective comes from several directions.  One source is tropical geometry, which,
e.g., relates tropical convexity to the combinatorics of the Grassmannians \cite{SpeyerSturmfels04}, % \cite{Speyer08},
\cite{HerrmannJoswigSpeyer14}, \cite{FinkRincon:1305.6329}.  A second independent source is the study of tropical
analogues of linear programming \cite{ABGJ-Simplex:A} which, e.g., is motivated by its connections to deep open problems
in computational complexity \cite{AkianGaubertGutermann12}.

Since the paper \cite{DevelinSturmfels:2004} by Develin and Sturmfels more than ten years ago some of the strands of
research still seem to diverge.  The main purpose of this paper is to help bridging this gap.  Our point of departure is
\cite[Theorem~1]{DevelinSturmfels:2004}, which establishes a fundamental correspondence between the configurations of $n$
points in the \emph{tropical projective torus} $\RR^d/\RR\1$ and the regular subdivisions of the product of simplices
$\Delta_{d-1}\times\Delta_{n-1}$.  We suggest to call this result the \emph{Structure Theorem of Tropical Convexity}.
It was recently extended by Fink and Rinc\'on \cite[Corollary~4.2]{FinkRincon:1305.6329} to include regular subdivisions
of subpolytopes of products of simplices.  For the tropical point configurations this amounts to taking $\infty$ as a
coordinate into account.  Our first contribution is a new proof of that result (Corollary~\ref{coro:dual_sub_tcone}).
Moreover, in \cite{DevelinSturmfels:2004} and \cite{FinkRincon:1305.6329} only tropical convex hulls of points (or
dually, arrangements of tropical hyperplanes) are considered, whereas here we also bring exterior descriptions in terms
of tropical half-spaces \cite{MJ:2005}, \cite{GaubertKatz:11} into the picture.  Arrangements of max-tropical halfspaces
correspond to the `two-sided max-linear systems' in the max-plus literature \cite[\S7]{Butkovic:10}.  As an additional
benefit our methods allow us to resolve a previously open question raised by Develin and Yu, who conjectured that a
finitely generated tropical convex hull is pure and full-dimensional if and only if it has a half-space description in
which the apices of these tropical half-spaces are in general position \cite[Conjecture~2.11]{DevelinYu:2007}.  We show
that, indeed, general position implies pureness and full-dimensionality (Theorem~\ref{thm:pure}), and we give a
counter-example to the converse (Example~\ref{exmp:pure}).  The approach through tropical convex hulls on the one hand
and the approach through systems of tropical inequalities on the other hand gives rise to two interesting cell
decompositions of the \emph{tropical projective spaces} (Theorem~\ref{thm:covector_decomposition} and
Corollary~\ref{cor:signed_cell_decomposition}).  This ties in with compactifications of tropical varieties; see
Mikhalkin~\cite[\S3.4]{Mikhalkin:2006}.

As in \cite{DevelinSturmfels:2004} it turns out to be convenient to examine the regular subdivisions of products of
simplices and their subpolytopes in terms of a dual ordinary convex polyhedron, which we call the \emph{envelope} of the
tropical point configuration.  In fact, it is even fruitful to see this envelope as a special case of a more general
class of ordinary polyhedra which are associated with directed graphs with weighted arcs.  These \emph{weighted digraph
  polyhedra} are defined by linear inequalities of the form
\[
x_i - x_j \ \leq \ w_{ij} \enspace ,
\]
where $w_{ij}$ is the weight on the arc from the node $i$ to the node $j$.  Their feasible points are well known as
\emph{potentials} in the optimization literature, and the weighted digraph polyhedra are sometimes called `shortest path
polyhedra'; e.g., see \cite[\S8.2]{Schrijver:CO:A} for an overview.  Recently potentials and weighted digraph
polyhedra starred prominently in the work of Khachiyan and al.~\cite{Khachiyan:2008} on hardness results in the context of
vertex enumeration.
Specializing all arc weights to zero yields the \emph{braid cones} of Postnikov, Reiner and Williams \cite{PostnikovReinerWilliams08}, which are closely related to \emph{order polytopes} of partially ordered sets.
By applying a celebrated result of Stanley \cite[Theorem~1.2]{Stanley:1986} we obtain a combinatorial characterization of the entire face lattice of any digraph
cone (Theorem~\ref{thm:partition}).

Our paper is organized as follows.  Section~\ref{sec:weighted} starts out with investigating a general weighted digraph
polyhedron $Q(W)$ associated with a $k{\times}k$-matrix $W$, which we read as a directed graph $\Gamma=\Gamma(W)$
equipped with a weight function.  The braid cones, with all finite entries equal to zero, naturally come in as their
recession cones.  We show that the face lattice of a braid cone is isomorphic to a face figure of the order polytope
associated with the acyclic reduction of $\Gamma$ and, via Stanley's result \cite[Theorem~1.2]{Stanley:1986}, to a
partially ordered set of partitions of the node set of $\Gamma$ ordered by refinement. It is a key observation that the
faces of a weighted digraph polyhedron are again weighted digraph polyhedra.  The envelope of an arbitrary
$d{\times}n$-matrix $V$ is the weighted digraph polyhedron for a specific $(d{+}n){\times}(d{+}n)$-matrix constructed
from~$V$.

In Section~\ref{sec:tropical} we direct our attention to tropical convexity, which is essentially the same as linear
algebra over the tropical semi-ring $\TT_{\min}=(\RR\cup\{\infty\},\min,+)$.  Clearly, it is just a matter of taste if
one prefers $\min$ or $\max$ as the tropical addition.  More importantly though, it turns out to be occasionally
convenient to use both these operations together to be able to phrase some of our results in a natural way.  So we
usually consider tropical linear spans of vectors in the $\min$-tropical setting and intersections of 
tropical half-spaces in the $\max$-setting.  With any matrix $V\in\RR^{d\times n}$ Develin and Sturmfels associate a
polyhedral decomposition of the tropical projective torus $\RR^d/\RR\1$ \cite[\S3]{DevelinSturmfels:2004}; here $\1$
denotes the all ones vector.  We follow Fink and Rinc\'on \cite{FinkRincon:1305.6329} in calling this polyhedral complex
the \emph{covector decomposition}.  The cells of the covector decomposition are naturally indexed by subgraphs of the
digraph $\Gamma(W)$, where $W$ is the $(d{+}n){\times}(d{+}n)$-matrix mentioned above.  Moreover, these cells arise as
orthogonal projections of the faces of the envelope of $V$.  If $V$ is finite then (in the tropical projective torus)
the union of the bounded cells of the type decomposition is exactly the tropical convex hull of the columns of $V$.
Further, the covector decomposition is dual to a regular subdivision of the product of simplices
$\Delta_{d-1}\times\Delta_{n-1}$.  If $V$ has infinite coordinates, it still makes sense to talk about the
\emph{tropical cone} generated by the columns, but $\Delta_{d-1}\times\Delta_{n-1}$ gets replaced by the subpolytope
corresponding to the finite entries of~$V$; see \cite{FinkRincon:1305.6329}. This leads to studying point configurations
in the \emph{tropical projective space}; see Mikhalkin~\cite[\S3.4]{Mikhalkin:2006} and Section~\ref{sec:projective}
below.  Another way of interpreting the matrix $V$, with coefficients in $\TT_{\min}$, is as an arrangement of
max-tropical hyperplanes.  The covector decomposition arises as the common refinement of the affine fans corresponding
to these tropical hyperplanes.  Equipping such a tropical hyperplane arrangement with a certain graph encoding the
feasibility of a cell gives rise to a max-tropical cone described as the intersection of finitely many \emph{tropical
  half-spaces}; see \cite{MJ:2005} and \cite{GaubertKatz:11}.  This is how tropical cones naturally arise in the context
of tropical linear programming.  In \cite{ABGJ-Simplex:A} a tropical version of the simplex method is described.  The
pivoting operation proposed there can be explained in terms of operations on the graph $\Gamma(W)$, the crucial object
being the \emph{tangent digraph} from \cite[\S3.1]{ABGJ-Simplex:A}, which carries the same information as the `tangent
hypergraphs' of Allamigeon, Gaubert and Goubault \cite{AGG:2013}.  We show how the tangent digraph encodes the local
combinatorics of the covector decomposition induced by $V$ in the neighborhood of a given point. Finally, we recall the
\emph{signed cell decompositions} from \cite[\S3.2]{ABGJ-Simplex:A} which form the tropical analogues of the polyhedral
complexes generated from a system of ordinary affine hyperplanes.

The upshot is that all the remarkable combinatorial properties of tropical convexity can be inferred from the weighted
digraph polyhedra.  It is worth noting that the facet normals of their defining inequalities are precisely the roots of
a type A root system. Lam and Postnikov \cite{LamPostnikov:2007} introduced `alcoved polytopes' which are exactly the
weighted digraph polyhedra which are bounded (modulo projecting out the subspace $\RR\1$).  These are also the
\emph{polytropes} in \cite{JoswigKulas:2010}. Section~\ref{sec:polytropes} gives more details.  The paper closes with a
few open problems.

\section{Weighted digraph polyhedra} \label{sec:weighted}
\subsection{The construction}
Let $W=(w_{ij})$ be an arbitrary $k{\times}k$-matrix with coefficients in $\TT_{\min}=\RR\cup\{\infty\}$.  This yields a digraph
$\Gamma(W)$ with node set $[k]$ and an arc from $i$ to $j$ whenever the coefficient $w_{ij}$ is finite.  Notice that
$\Gamma(W)$ may have loops, corresponding to finite entries on the diagonal.  Also $(i,j)$ and $(j,i)$ both may be arcs,
but there are no other multiple edges.  The matrix $W$ induces a map, $\gamma$, which assigns to each arc $(i,j)$ of
$\Gamma(W)$ its \emph{weight} $w_{ij}$.  We call the pair $(\Gamma(W),\gamma(W))$ the \emph{weighted digraph} associated
with $W$. Conversely, each finite directed graph $\Gamma$ endowed with a weight function $\gamma$ on its arcs has a
\emph{weighted adjacency matrix} $W(\Gamma,\gamma)$.  Often we will not distinguish between the matrix $W$ and the
digraph $\Gamma$ equipped with the weight function $\gamma$.

Our key player is the \emph{weighted digraph polyhedron} $Q(W)$ in $\RR^k$ which is defined by the linear inequalities
\begin{equation}\label{eq:defining}
  x_i - x_j \ \leq \ w_{ij} \qquad \text{ for each arc $(i,j)$ in $\Gamma(W)$} \enspace .
\end{equation}
For a directed graph $\Gamma$ with a weight function $\gamma$ we also write $Q(\Gamma,\gamma)$ instead of
$Q(W(\Gamma,\gamma))$.  Observe that $-Q(W) = Q(\transpose{W})$.  A feasible point in $Q(W)$ is sometimes called a \emph{potential} on the digraph $\Gamma$; e.g.,
see \cite[\S8.2]{Schrijver:CO:A}.  The following result of Gallai \cite{Gallai:58} clarifies the feasibility of the constraints; see
also \cite[Theorem~8.2]{Schrijver:CO:A} and \cite[\S2.1]{Butkovic:10}. 

\begin{lemma} \label{lemma:feasible}
  The weighted digraph polyhedron $Q(W)$ is empty if and only if the weighted digraph $(\Gamma,\gamma)$ has a negative cycle.
\end{lemma}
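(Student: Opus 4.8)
The plan is to prove both directions by relating feasibility of the system \eqref{eq:defining} to the combinatorics of walks in $(\Gamma,\gamma)$. The key quantity is, for any pair of nodes $i,j$ that are connected by a directed walk, the infimum of the total weight over all directed walks from $i$ to $j$; call a provisional shortest-walk value $\delta(i,j)$. The whole argument hinges on the observation that a solution $x\in\RR^k$ to \eqref{eq:defining} is exactly a \emph{potential}: summing the inequality $x_{i_\ell}-x_{i_{\ell+1}}\le w_{i_\ell i_{\ell+1}}$ along any directed walk $i=i_0,i_1,\dots,i_m=j$ telescopes to $x_i-x_j\le\gamma(\text{walk})$. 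So any feasible $x$ forces $x_i-x_j\le\delta(i,j)$ whenever $\delta(i,j)$ is defined, and in particular, taking a walk that returns to its start, every directed cycle must have nonnegative total weight. This gives the easy direction: if $Q(W)\ne\emptyset$ then $(\Gamma,\gamma)$ has no negative cycle (contrapositive of ``negative cycle $\Rightarrow$ $Q(W)=\emptyset$'').

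For the converse, assume $(\Gamma,\gamma)$ has no negative cycle; I must construct a feasible potential. First I would reduce to a connected situation: decompose $\Gamma$ into its (weakly) connected components and solve each separately, since the inequalities \eqref{eq:defining} couple only nodes in the same component. Within one component, pick a node $r$ and adjoin, if necessary, zero-weight arcs $(r,j)$ for all $j$ — this cannot create a negative cycle (the new arcs are sinks-to-nothing in the sense that they only leave $r$, so any cycle through a new arc would have to re-enter $r$, but one checks the added arcs don't close any negative cycle because... more carefully, I would instead just argue directly without adding arcs). Cleaner: for each node $j$ reachable from $r$, set $x_j := -\delta(r,j)$, where $\delta(r,j)=\inf\{\gamma(P) : P \text{ a directed walk } r\to j\}$. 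The no-negative-cycle hypothesis guarantees this infimum is attained by a \emph{simple} path and hence is a finite real number (a walk with a repeated vertex can be shortened by deleting the intervening cycle, which has weight $\ge 0$). For nodes not reachable from $r$, repeat the construction on the sub-digraph they induce, or simply observe one can process nodes component-by-component and, within a component, choose roots so that every node gets a value. Then for any arc $(i,j)$ one has $\delta(r,j)\le\delta(r,i)+w_{ij}$ by concatenating a shortest walk to $i$ with the arc $(i,j)$, i.e. $x_i-x_j = -\delta(r,i)+\delta(r,j)\le w_{ij}$, so \eqref{eq:defining} holds. Hence $Q(W)\ne\emptyset$.

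The main obstacle is the bookkeeping when $\Gamma$ is not strongly connected: the ``shortest walk from $r$'' only defines $x_j$ for $j$ in the forward cone of $r$, and one must patch together values across strongly connected components in a consistent way. The clean fix is to note that \eqref{eq:defining} is invariant under adding a constant to all coordinates in a weakly connected component, and to build the potential by induction on the condensation DAG of $\Gamma$: order the strong components topologically, and when moving to a new component whose incoming arcs come from already-processed components, the finitely many constraints $x_i - x_j \le w_{ij}$ with $i$ already fixed give a finite lower bound $\max_i(x_i - w_{ij})$ for $x_j$, while internal consistency of the new component is handled by the reachability construction above (applied within that component, which inherits no negative cycle); since only finitely many constraints are active one can always choose each $x_j$ large enough. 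I would remark that this is exactly Gallai's theorem and cite \cite{Gallai:58} and \cite[Theorem~8.2]{Schrijver:CO:A}, so in the paper this lemma can be stated with a pointer to the literature rather than a full reproduction of the proof.
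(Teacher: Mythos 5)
Your proof is correct. Note first that the paper does not actually prove this lemma: it is stated as a known result of Gallai, with pointers to \cite{Gallai:58}, \cite[Theorem~8.2]{Schrijver:CO:A} and \cite[\S2.1]{Butkovic:10} --- exactly the disposition you suggest in your closing sentence. Your written-out argument is the standard one: telescoping \eqref{eq:defining} along a closed walk gives the easy direction, and in the absence of negative cycles the shortest-walk values $-\delta(r,\cdot)$ furnish a feasible potential. Two remarks on the construction. The idea you start and then abandon --- adjoining zero-weight arcs $(r,j)$ from an \emph{existing} node $r$ --- is indeed unsafe, and you were right to back away from it: if the graph already contains a negative-weight path from some $j$ back to $r$, the new arc closes a negative cycle. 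The clean version of that trick adds a \emph{fresh} auxiliary node $s$ with zero-weight arcs to every node; since $s$ has no incoming arcs it lies on no cycle, every node becomes reachable from $s$, and $x_j:=-\delta(s,j)$ is feasible in one stroke, with no case analysis on reachability. Your fallback via the condensation is also sound: within a strong component, shortest paths from any root are attained by simple paths (so finite) and give a potential determined up to an additive constant, and processing the strong components in topological order lets you shift each component's potential upward to satisfy the finitely many incoming constraints, while every arc leaving the component points to a not-yet-processed component and is deferred. Either route yields a complete proof where the paper offers only a citation.
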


If the weighted digraph $(\Gamma,\gamma)$ does not have any negative cycle there is a directed shortest path between any
two nodes.  Let $W^*=(w_{ij}^*)$ be the $k{\times}k$-matrix which records the weights of these shortest paths.
Following Butkovi\v{c} \cite[\S1.6.2]{Butkovic:10} we call the shortest path matrix $W^*$ the \emph{Kleene star} of $W$.
The tropical addition $\oplus=\min$ extends to vectors and matrices coefficientwise.  Moreover, the tropical addition
and the tropical multiplication give rise to a tropical matrix multiplication, which we also write as $\odot$.  Matrix
powers of $W$ with respect to $\odot$ are written as $W^{\odot \ell}$ where $W^{\odot 0}=I$ is the min-tropical unit matrix,
which has zero coefficients on the diagonal and $\infty$ otherwise, and $W^{\odot (\ell+1)}=W^{\odot \ell}\odot W$.
With this notation we have the formula
\[
W^* \ = \ I \oplus W \oplus W^{\odot 2} \oplus \cdots \oplus W^{\odot k} \enspace ,
\]
whose direct evaluation amounts to applying the Bellman-Ford method for computing all shortest paths \cite[\S8.3]{Schrijver:CO:A}.
The next lemma points out a special property of the inequality description given by $W^*$; see \cite[Theorem~8.3]{Schrijver:CO:A}.
\begin{lemma} \label{lem:Kleene_tight}
  Each of the defining inequalities from \eqref{eq:defining} for the weighted digraph polyhedron of the matrix $W^*$ is tight. 
\end{lemma}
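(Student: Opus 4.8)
The plan is to produce, for each arc $(i,j)$ of $\Gamma(W^*)$, an explicit potential $x\in Q(W^*)$ that satisfies the constraint $x_i-x_j\le w^*_{ij}$ with equality; this is precisely what it means for that inequality to be tight. Throughout I will use that $Q(W^*)=Q(W)$ is nonempty — since $W^*$ is defined at all we know $(\Gamma,\gamma)$ has no negative cycle, so Lemma~\ref{lemma:feasible} applies — and two standard consequences: first, $w^*_{\ell\ell}=0$ for every node $\ell$ (the empty walk has weight $0$ and no cycle is negative); second, the shortest-path triangle inequality $w^*_{ab}\le w^*_{ac}+w^*_{cb}$ holds whenever the right-hand side is finite, obtained by concatenating shortest $a$-to-$c$ and $c$-to-$b$ walks and then deleting the (non-negative) cycles from the resulting walk.

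Fix an arc $(i,j)$ of $\Gamma(W^*)$, so $w^*_{ij}$ is finite, i.e.\ there is a directed path from $i$ to $j$ in $\Gamma(W)$. Let $S\subseteq[k]$ be the set of nodes from which $j$ is reachable in $\Gamma(W)$; then $i,j\in S$ and $w^*_{\ell j}$ is finite for every $\ell\in S$. First I would set $x_\ell:=w^*_{\ell j}$ for $\ell\in S$. For any arc $(a,b)$ with $a,b\in S$ the triangle inequality (with intermediate node $b$ on a path from $a$ to $j$) gives $x_a-x_b=w^*_{aj}-w^*_{bj}\le w^*_{ab}$, while for the arc $(i,j)$ itself $x_i-x_j=w^*_{ij}-w^*_{jj}=w^*_{ij}$, so the target equality is already in place on $S$.

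It remains to extend $x$ to $[k]\setminus S$ without losing feasibility. The key observation is that $\Gamma(W)$ has no arc from $[k]\setminus S$ into $S$: an arc $(a,b)$ with $b\in S$, concatenated with a path from $b$ to $j$, would witness $a\in S$. Hence the only defining inequalities not yet verified are those among the nodes of $[k]\setminus S$ and those on arcs from $S$ to $[k]\setminus S$. Restricting $W^*$ to $[k]\setminus S$ gives a weighted sub-digraph with no negative cycle, so by Lemma~\ref{lemma:feasible} its polyhedron is nonempty; pick a feasible point $y$ there. Those inequalities only involve differences $y_a-y_b$, so the translate $y+t\1$ remains feasible for every $t\in\RR$, and for $t$ large enough it also satisfies $x_a-(y_b+t)\le w^*_{ab}$ for the finitely many arcs $(a,b)$ with $a\in S$, $b\notin S$. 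Gluing $x|_S$ with $y+t\1$ yields a point of $\RR^k$ that lies in $Q(W^*)$ and attains $x_i-x_j=w^*_{ij}$, which is the assertion.

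I expect the only genuine subtlety to be exactly this last patching step: one cannot simply put $x_\ell=w^*_{\ell j}$ on all of $[k]$, because that value is $+\infty$ for nodes from which $j$ is unreachable, so the argument must exploit the one-way structure of the arcs across $S$ and borrow a feasible potential on the complement from Lemma~\ref{lemma:feasible}. Everything else is the routine ``shortest-path potential'' computation.
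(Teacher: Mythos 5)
Your proposal is correct and its core is exactly the paper's argument: the witness point is the $j$th column of $W^*$, which attains equality on the arc $(i,j)$ because $w^*_{jj}=0$, and is feasible by the triangle inequality $w^*_{pj}\le w^*_{pq}+w^*_{qj}$. The one place you go beyond the paper is the patching step for nodes from which $j$ is unreachable: the published proof simply takes ``the $j$th column of $W^*$'' as a point of $Q(W^*)\subseteq\RR^k$, silently assuming all its entries are finite, whereas your restriction to the reachability set $S$, the observation that no arc of $\Gamma(W^*)$ enters $S$ from outside, and the large-$t$ shift of a feasible potential on the complement make the argument work for a general (not strongly connected) digraph. That refinement is sound and genuinely completes the paper's one-line proof rather than deviating from it.
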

\begin{proof}
  Let $x_i - x_j \leq w_{ij}^*$ be an inequality defining $Q(W^*)$.  The vector of weights $w_{pj}^*$ for $p \in [k]$,
  i.e., the $j$th column of $W^*$, satisfies each inequality by the shortest path property $w_{pj}^* \leq w_{pq}^* +
  w_{qj}^*$.  Equivalently we have $w_{pj}^* - w_{qj}^* \leq w_{pq}^*$. Due to $w_{jj}^* = 0$, this vector satisfies the
  equality $x_i - x_j = w_{ij}^*$.
\end{proof}

Throughout the following we assume that $(\Gamma,\gamma)$ does not have a negative cycle.  In view of Lemma~\ref{lemma:feasible}
this is equivalent to the feasibility of $Q(W)$, and the Kleene star $W^*$ is defined.  Further, let $E(W)$ be the
\emph{equality graph} of $W$, which is the undirected graph on the node set $[k]$ and which has an edge between $i$ and
$j$ if $Q(W)$ satisfies $x_i-x_j=w_{ij}^*<\infty$ or $x_j-x_i=w_{ji}^*<\infty$.
\goodbreak % This is supposed to avoid the heading of the next lemma appearing at the very bottom of a page (as a single
           % line).

\begin{lemma} \strut \label{lemma:zero_cycles}
  \begin{enumerate} [label = (\alph*)]
  \item We have $Q(W^*) = Q(W)$ and $E(W^*)=E(W)$.
  \item \label{lemma:item:zero_cycles} Two distinct nodes $i$ and $j$ are contained in a directed cycle of weight zero in
    $\Gamma(W)$ if and only if $\{i,j\}$ is contained in the equality graph $E(W)$ if and only if
    $w_{ij}^*=-w_{ji}^*<\infty$.
  \end{enumerate}
\end{lemma}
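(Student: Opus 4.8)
The plan is to handle the two parts in order, reducing everything to elementary manipulations with shortest-path weights together with the standing no-negative-cycle hypothesis, Lemma~\ref{lemma:feasible}, and Lemma~\ref{lem:Kleene_tight}. For part (a), the equality $Q(W)=Q(W^*)$ follows from two inclusions. Each defining inequality $x_i-x_j\le w_{ij}^*$ of $Q(W^*)$ is obtained from the inequalities of $Q(W)$ by telescoping along a directed shortest $i\to j$ path, so $Q(W)\subseteq Q(W^*)$; conversely, on each arc $(i,j)$ of $\Gamma(W)$ one has $w_{ij}^*\le w_{ij}$, so the inequality of $Q(W^*)$ indexed by that arc implies the one of $Q(W)$, giving $Q(W^*)\subseteq Q(W)$. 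For the equality graphs I would first note that the Kleene star is idempotent, $(W^*)^*=W^*$: a directed path in $\Gamma(W^*)$ from $i$ to $j$ has weight at least $w_{ij}^*$ because $w^*$ satisfies the triangle inequality, and this value is attained by the single arc $(i,j)$ whenever $w_{ij}^*<\infty$, while $\Gamma(W)$ and $\Gamma(W^*)$ have the same reachability relation. Hence $E(W^*)$ is defined through exactly the weights $w_{ij}^*$, and since moreover $Q(W^*)=Q(W)$, the two defining conditions coincide, so $E(W^*)=E(W)$.

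For part (b) I would first record that $w_{ij}^*+w_{ji}^*\ge 0$ for all distinct $i,j$: otherwise concatenating a shortest $i\to j$ path with a shortest $j\to i$ path is a closed walk of negative weight, hence contains a negative cycle, contradicting Lemma~\ref{lemma:feasible} and the standing hypothesis. Consequently condition~(iii), $w_{ij}^*=-w_{ji}^*<\infty$, is equivalent to the single inequality $w_{ij}^*+w_{ji}^*\le 0$, because two elements of $\RR\cup\{\infty\}$ with sum at most $0$ are automatically finite. The equivalence of~(iii) with the fact that $i$ and $j$ lie on a directed cycle of weight zero is then immediate: concatenating shortest paths as above produces a closed directed walk through $i$ and $j$ of weight $w_{ij}^*+w_{ji}^*=0$, and conversely splitting any such walk at $i$ and at $j$ exhibits an $i\to j$ sub-walk and a $j\to i$ sub-walk whose weights sum to $0$, so $w_{ij}^*+w_{ji}^*\le 0$. (If the stricter reading of a simple cycle is intended, part~(a) allows one to replace $\Gamma(W)$ by $\Gamma(W^*)$, in which the $2$-cycle $i\to j\to i$ has weight $w_{ij}^*+w_{ji}^*$.)

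It remains to connect~(iii) with membership in $E(W)=E(W^*)$. If $w_{ij}^*=-w_{ji}^*<\infty$, then $x_i-x_j\le w_{ij}^*$ and $x_j-x_i\le w_{ji}^*=-w_{ij}^*$ are both among the defining inequalities of $Q(W^*)=Q(W)$, so $x_i-x_j=w_{ij}^*$ holds on all of $Q(W)$ and $\{i,j\}\in E(W)$. For the converse, assume $\{i,j\}\in E(W^*)$, say $x_i-x_j=w_{ij}^*<\infty$ on $Q(W^*)$. If $w_{ji}^*<\infty$, then $x_j-x_i\le w_{ji}^*$ is a defining inequality of $Q(W^*)$, hence attained with equality at some point by Lemma~\ref{lem:Kleene_tight}; at that point $x_j-x_i=-w_{ij}^*$ as well, so $w_{ji}^*=-w_{ij}^*$ and~(iii) holds.

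The one remaining case, $w_{ji}^*=\infty$, I would rule out by a reachability argument, and I expect this to be the only point requiring genuine care. Let $S$ be the set of nodes from which $i$ is reachable in $\Gamma(W)$, so that $i\in S$, $j\notin S$, and no arc runs from $[k]\setminus S$ into $S$. Then for any $x\in Q(W^*)$ and any $\varepsilon\ge 0$, the point obtained from $x$ by adding $\varepsilon$ to every coordinate indexed by $[k]\setminus S$ again lies in $Q(W^*)$, while $x_i-x_j$ decreases by $\varepsilon$ — contradicting that $x_i-x_j$ equals the constant $w_{ij}^*$ throughout $Q(W^*)$. Thus $w_{ji}^*<\infty$, and we are back in the previous case; making precise that an implied equality cannot survive the presence of such a free direction is the subtle step, while the rest is bookkeeping with shortest paths and valid inequalities.
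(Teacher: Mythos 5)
Your proposal is correct and follows essentially the same route as the paper: telescoping along directed paths for part (a), and for part (b) reducing everything to shortest-path weights, the bound $w_{ij}^*+w_{ji}^*\ge 0$, and the tightness of the Kleene-star inequalities (your parenthetical about closed walks versus simple cycles resolves the same ambiguity the paper itself leaves, by passing to $\Gamma(W^*)$). You are in fact slightly more careful than the paper at one point: in the direction from $\{i,j\}\in E(W)$ to $w_{ij}^*=-w_{ji}^*<\infty$, the paper applies Lemma~\ref{lem:Kleene_tight} to the inequality $x_j-x_i\le w_{ji}^*$ without first verifying that $w_{ji}^*<\infty$, i.e.\ that this defining inequality exists at all, whereas your reachability-and-perturbation argument disposes of the case $w_{ji}^*=\infty$ explicitly.
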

\begin{proof}
  The proof for both statements is essentially the same.
  Let $\pi=(i_0,i_1,\dots,i_m)$ be a directed path in $\Gamma$.  This corresponds to the inequalities $x_{i_{\ell-1}} \leq x_{i_{\ell}} + w_{i_{\ell-1}i_{\ell}}$ for $\ell \in \{1, \ldots, m\}$. By transitivity we obtain
  \[
  x_{i_0} \ \leq \ x_{i_m} + \sum_{\ell = 1}^{m}w_{i_{\ell-1}i_{\ell}}
  \]
  as a valid inequality for $Q(W)$.  Restricting to shortest paths shows $Q(W^*)\supseteq Q(W)$.  The other inclusion is obvious.  Notice that this readily implies that the equality graphs $E(W)$ and $E(W^*)$ are the same.
  
  Now suppose that $\pi$ is a directed cycle of weight zero.  In particular, $i_0=i_m$ is the same node and because of the presumed feasibility, the cycle contains the shortest path for any pair of its nodes.  The above yields for each $\mu \in \{0, \ldots, m\}$ the inequalities
  \[
  x_{i_0} \ \leq \ x_{i_{\mu}} + \sum_{\ell = 1}^{\mu}w_{i_{\ell-1}i_{\ell}} \ = \ x_{i_{\mu}} + w_{i_0,i_{\mu}}^* \text{ and }  x_{i_{\mu}} \ \leq \ x_{i_m} + \sum_{\ell = \mu+1}^{m}w_{i_{\ell-1}i_{\ell}} \ = \ x_{i_0} + w_{i_{\mu},i_{0}}^*\enspace .
  \]
With $w_{i_0,i_{\mu}}^* + w_{i_{\mu},i_0}^* = 0$ we obtain 
\[
x_{i_0} - x_{i_{\mu}} \ \leq \ w_{i_0,i_{\mu}}^* \ = \ -w_{i_{\mu},i_0}^* \ \leq \ x_{i_0} - x_{i_{\mu}}
\]
  and hence the equality $x_{i_0}-x_{i_{\mu}} = w_{i_0,i_{\mu}}^*$.  This shows that the edge $\{i_0,i_{\mu}\}$ is contained in the equality graph $E(W^*)=E(W)$.

  Finally, let $\{i,j\}$ be an edge in $E(W)=E(W^*)$.  Then $x_{i}-x_{j} = w_{ij}^*<\infty$, and it follows that also
  $x_{j}-x_{i}=-w_{ij}^*$ is finite.  Since the inequality $x_j - x_i \leq w_{ji}^*$ is tight by Lemma~\ref{lem:Kleene_tight} we obtain $w_{ji}^* = -w_{ij}^*$.
  Therefore, there is a directed path from $j$ to $i$ in $\Gamma(W)$, and hence $(i,j,i)$ is a directed cycle of weight zero in $\Gamma(W^*)$.
  From this we infer our claim.
\end{proof}

\begin{figure}[ht]
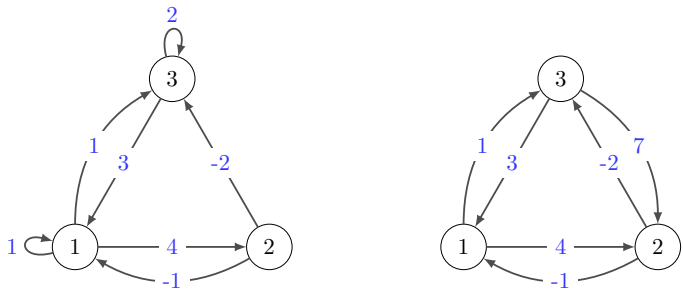

  \centering
  \resizebox{0.6\textwidth}{!}{\inclpic{ex11_wdp}}
  \caption{The directed graphs defined by the matrices $W$ and $W^*$ from Example~\ref{example:weighted_digraph_W}}
  \label{figure:weighted_digraph_W}
\end{figure}

\begin{example}\label{example:weighted_digraph_W}
  The $3{\times}3$ matrix
  \begin{equation}\label{eq:W33}
    W \ = \
    \left(
      \begin{array}[c]{ccc}
        1 & 4 & 1 \\
        -1 & 0 & -2 \\
        3 & \infty & 2
      \end{array}
    \right)
  \end{equation}
  defines a directed graph without any cycles of weight zero. Its Kleene star is the matrix
  \[
  W^* \ = \
  \left(
    \begin{array}[c]{ccc}
      0 & 4 & 1 \\
      -1 & 0 & -2 \\
      3 & 7 & 0
    \end{array}
  \right) \enspace .
  \]
  The graphs of $W$ and $W^*$ are displayed in Figure~\ref{figure:weighted_digraph_W}, while
  Figure~\ref{figure:polyhedron_W} shows the corresponding weighted digraph polyhedron.  Our convention for drawing
  digraphs is to omit loops of weight zero and arbitrary arcs of infinite weight.  Since each weighted digraph
  polyhedron contains the one-dimensional linear subspace $\RR\1$ in its lineality space, throughout we draw pictures in
  the quotient $\RR^d/\RR\1$, which is called the \emph{tropical projective $(d{-}1)$-torus} in
  \cite[\S5.2]{TropicalBook}.  More precisely, for a feasible point $x+\RR\1$ in the quotient we draw the unique
  representative with $x_1=0$.  This is the same as drawing the intersection of $Q(W)$ with the hyperplane $x_1=0$.
  As the polyhedron $Q(W)$ corresponding to the matrix~\eqref{eq:W33} is not contained in any hyperplane its equality graph $E(W)$ is the undirected graph with three isolated nodes.
\end{example}

\begin{figure}[ht]
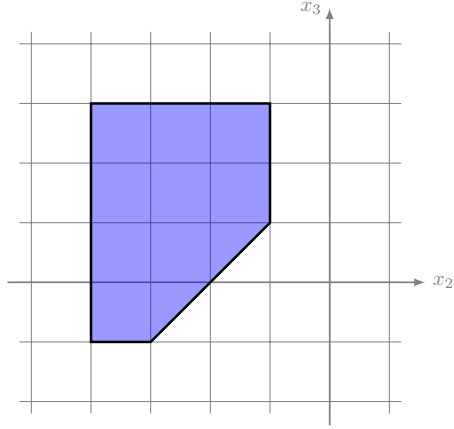

  \centering
  \resizebox{0.4\textwidth}{!}{\inclpic{ex12_wdp}}
  \caption{The weighted digraph polyhedron $Q(W)=Q(W^*)$ for the matrices $W$ and $W^*$ from
    Example~\ref{example:weighted_digraph_W}, shown in the tropical projective $2$-torus}  
  \label{figure:polyhedron_W}
\end{figure}

We return to studying general matrices $W$.
\begin{lemma}\label{lemma:dim equality graph}
  The connected components of the equality graph of $E(W)$ are complete graphs, and their number is the dimension of the
  polyhedron $Q(W)$.
\end{lemma}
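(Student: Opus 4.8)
The plan is to settle the two assertions one after another, in both cases relying on the characterization of edges of the equality graph furnished by Lemma~\ref{lemma:zero_cycles}\ref{lemma:item:zero_cycles}: the pair $\{i,j\}$ is an edge of $E(W)$ precisely when $w_{ij}^*=-w_{ji}^*<\infty$.

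For the statement about complete graphs, I would verify that the relation on $[k]$ defined by ``$i=j$, or $\{i,j\}$ is an edge of $E(W)$'' is an equivalence relation; its classes are then exactly the connected components of $E(W)$, and each of them spans a complete subgraph. Reflexivity holds because $w_{ii}^*=0$, and symmetry is immediate. For transitivity, assume $i,j,l$ are pairwise distinct with $w_{ij}^*=-w_{ji}^*$ and $w_{jl}^*=-w_{lj}^*$, all finite. The shortest-path (triangle) inequality gives $w_{il}^*\le w_{ij}^*+w_{jl}^*<\infty$ and $w_{li}^*\le w_{lj}^*+w_{ji}^*<\infty$; adding these and using the two vanishing sums yields $w_{il}^*+w_{li}^*\le 0$. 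On the other hand $w_{il}^*+w_{li}^*$ is the weight of a closed walk in $\Gamma(W)$, obtained by concatenating a shortest $i$-$l$-path with a shortest $l$-$i$-path, and such a walk has nonnegative weight because $(\Gamma,\gamma)$ has no negative cycle. Hence $w_{il}^*=-w_{li}^*<\infty$, so $\{i,l\}$ is an edge of $E(W)$, and transitivity follows.

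For the dimension, let $c$ be the number of connected components of $E(W)$ and recall $Q(W)=Q(W^*)$ from Lemma~\ref{lemma:zero_cycles}. By the definition of the equality graph, a defining inequality $x_i-x_j\le w_{ij}^*$ of $Q(W^*)$ holds with equality on all of $Q(W)$ exactly when $\{i,j\}$ is an edge of $E(W)$, while every other defining inequality is strict at any point of the relative interior of the nonempty polyhedron $Q(W)$. Consequently the affine hull of $Q(W)$ is the solution set of the system $x_i-x_j=w_{ij}^*$ ranging over the edges $\{i,j\}$ of $E(W)$, whence
\[
\dim Q(W) \ = \ k - \rank\smallSetOf{e_i-e_j}{\text{$\{i,j\}$ an edge of $E(W)$}} \enspace .
\]
Since the vectors $e_i-e_j$ attached to the edges of a graph on $k$ nodes span a space of dimension $k$ minus the number of connected components (pick a spanning forest to exhibit $k-c$ independent ones, and observe that every $e_i-e_j$ is supported inside a single component and has zero coordinate sum), the displayed rank equals $k-c$, and $\dim Q(W)=c$ follows.

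The only point that really deserves care is the claim that the affine hull of $Q(W)$ is no smaller than the linear system coming from $E(W)$. I would argue it through the relative-interior point, as above — at such a point the tight defining inequalities are exactly the implicit equalities — or, if one prefers to avoid that, through a Farkas-type argument: every implicit equality of $Q(W)$ is a nonnegative combination of the defining inequalities, and for such a combination to be an equality throughout $Q(W)$ each inequality entering it with positive coefficient must itself be an implicit equality, hence correspond to an edge of $E(W)$. Everything else is routine bookkeeping with the Kleene star.
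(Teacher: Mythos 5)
Your proof is correct and follows essentially the same route as the paper: both rest on the shortest-path triangle inequality for $W^*$ to show each component of $E(W)$ is complete, and on reading $E(W)$ as the set of implicit equalities of $Q(W^*)$ to count the dimension. The only differences are cosmetic: the paper proves transitivity by chaining the equalities $x_i-x_j=w_{ij}^*$ and $x_j-x_\ell=w_{j\ell}^*$ on $Q(W)$ and invoking $w_{ij}^*+w_{j\ell}^*\ge w_{i\ell}^*$ directly, whereas you work at the level of the Kleene-star entries via Lemma~\ref{lemma:zero_cycles}\ref{lemma:item:zero_cycles} and the nonnegativity of closed walks; and your explicit affine-hull/graph-rank computation ($\dim Q(W)=k-\rank\smallSetOf{e_i-e_j}{\{i,j\}\in E(W)}=c$) supplies the bookkeeping that the paper compresses into a single sentence.
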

\begin{proof}
  The equalities $x_i - x_j = w_{ij}^*$ and $x_j - x_{\ell} = w_{j \ell}^*$ imply $x_i - x_{\ell} = w_{ij}^* + w_{j \ell}^* \geq
  w_{i\ell}^*$ and therefore $x_i - x_{\ell} = w_{i\ell}^*$ for any three nodes $i,j,\ell$ in the equality graph.  So there is an edge
  between any two nodes in a connected component of $E(W)$.  The statement about the dimension follows as the equality
  graph summarizes exactly those inequalities which are attained with equality and the connected components form a
  partition of the node set.
\end{proof}
The lemma above says that the equality graph encodes an equivalence relation on the node set $[k]$. The partition into
the connected components is the \emph{equality partition}.  Abusing our notation, again we denote this partition as $E(W)$.

\subsection{Intersections and faces}
Throughout the following we will frequently consider several graphs which share the same set of nodes.  In this case it
makes sense to identify such a graph with its set of edges (or arcs, in the directed case).  This allows to talk about
intersections and unions of such graphs.
\begin{lemma} \label{lemma: intersection wgp} Let $U$ and $W$ be $k{\times}k$-matrices.  The intersection of the
  weighted digraph polyhedra $Q(U)$ and $Q(W)$ is the weighted digraph polyhedron $Q(U \oplus W)$.  The arc set of the
  graph $\Gamma(U \oplus W)$ is the union of $\Gamma(U)$ and $\Gamma(W)$.
\end{lemma}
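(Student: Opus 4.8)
The plan is to unwind the definitions and compare the two finite systems of inequalities index pair by index pair, paying attention to the bookkeeping forced by the convention that an infinite weight contributes no inequality.

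First I would fix an ordered pair of indices $(i,j)$ and record which constraints on the difference $x_i - x_j$ are imposed by $Q(U)$, by $Q(W)$, and by $Q(U \oplus W)$. By \eqref{eq:defining}, the polyhedron $Q(U)$ imposes $x_i - x_j \leq u_{ij}$ precisely when $(i,j)$ is an arc of $\Gamma(U)$, that is, when $u_{ij} < \infty$, and imposes nothing on this difference otherwise; likewise $Q(W)$ imposes $x_i - x_j \leq w_{ij}$ exactly when $w_{ij} < \infty$. Hence a point $x$ lies in $Q(U) \cap Q(W)$ if and only if, for every ordered pair $(i,j)$, it satisfies every finite member of the (possibly empty) list $u_{ij}, w_{ij}$ as an upper bound for $x_i - x_j$. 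Since the minimum of a nonempty finite set of reals is the tightest such bound, and $\min(u_{ij}, w_{ij})$ stays equal to $\infty$ only when both entries are $\infty$, this pair of conditions is equivalent to the single condition $x_i - x_j \leq \min(u_{ij}, w_{ij}) = (U \oplus W)_{ij}$, with the understanding that this inequality is vacuous exactly when $(U \oplus W)_{ij} = \infty$.

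Next I would read off the arc set: the pair $(i,j)$ is an arc of $\Gamma(U \oplus W)$ if and only if $(U \oplus W)_{ij} = \min(u_{ij}, w_{ij})$ is finite, which happens if and only if at least one of $u_{ij}$, $w_{ij}$ is finite, i.e. if and only if $(i,j)$ is an arc of $\Gamma(U)$ or of $\Gamma(W)$. So the arc set of $\Gamma(U \oplus W)$ is the union of the arc sets of $\Gamma(U)$ and $\Gamma(W)$, and, letting $(i,j)$ range over this union, the inequalities collected in the previous paragraph are exactly the defining inequalities of $Q(U \oplus W)$. Combining the two observations yields $Q(U) \cap Q(W) = Q(U \oplus W)$.

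There is essentially no obstacle here; the only point requiring a little care is the role of the infinite entries, namely that dropping the inequality attached to an infinite weight is consistent with the rule that $\min(u_{ij}, w_{ij})$ remains infinite only when both of $u_{ij}, w_{ij}$ are. (Loops cause no extra trouble: if $u_{ii}$ or $w_{ii}$ is finite the corresponding constraint reads $0 \leq \min(u_{ii}, w_{ii})$, which is independent of $x$ and is inherited by $Q(U \oplus W)$ in the same way.)
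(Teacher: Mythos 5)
Your proposal is correct and follows essentially the same route as the paper's proof: both compare the two inequality systems pair by pair and use the fact that the conjunction of $x_i - x_j \leq u_{ij}$ and $x_i - x_j \leq w_{ij}$ is equivalent to $x_i - x_j \leq \min(u_{ij}, w_{ij})$. Your extra care with the infinite entries and the arc-set identification only makes explicit what the paper's two-sentence argument leaves implicit.
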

\begin{proof}
  The intersection of two polyhedra is given by the union of their defining inequalities. The two inequalities of the
  form $x_i - x_j \leq u_{ij}$ and $x_i - x_j \leq w_{ij}$ are both satisfied if and only if the inequality $x_i - x_j
  \leq \min(u_{ij},w_{ij})$ holds.
\end{proof}

Again we assume that the graph $\Gamma(W)$ does not contain any negative cycle, and thus $Q(W)$ is feasible.  Each face of
the polyhedron $Q(W)$ is obtained by turning some of the defining inequalities into equalities.  More precisely, for any
subgraph $G$ of $\Gamma$ let
\[
F_G \ = \ F_G(W)\ = \ F_G(\Gamma,\gamma)\ = \ \SetOf{x\in Q(W)}{x_i-x_j=w_{ij} \text{ for all } (i,j)\in G} \enspace .
\]
By construction $F_G$ is a face of $Q(W)$, and conversely each face of $Q(W)$ arises in this way.  We define a new
$k{\times}k$-matrix, denoted $W\#G$; it is constructed from $W$ by replacing the entries $w_{ji}$ with $-w_{ij}$ for each
$(i,j)\in G$. If $G$ contains both $(i,j)$ and $(j,i)$ as arcs, this operation is only defined provided that $w_{ij} +
w_{ji} = 0$.
The reason is that this equality is implied by $x_i-x_j=w_{ij}$ combined with $x_j-x_i=w_{ji}$.
The following is immediate.
\begin{lemma} \label{lemma: face of wdp}
  Faces of weighted digraph polyhedra are weighted digraph polyhedra.  More precisely, 
  \[
  \begin{aligned}
    F_G(W) \ &= \ Q(W) \cap \SetOf{x \in \RR^k}{x_i - x_j=w_{ij} \text{ for $(i,j)\in G$}} \\
    &= \ Q(W) \cap \SetOf{x \in \RR^k}{x_j - x_i \leq -w_{ij} \text{ for $(i,j)\in G$}} \ = \ Q(W\#G) \enspace .
  \end{aligned}
  \]
  Furthermore, the equality partition $E(W\#G)$ of a face $F_G(W)$ is obtained from the equality partition $E(W)$ by
  uniting the two parts which contain $i$ and $j$ if $(i,j)$ is an arc in $G$.
\end{lemma}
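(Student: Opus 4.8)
\emph{The chain of equalities.} The first equality merely unfolds the definition of $F_G(W)$. For the second, every arc $(i,j)$ of $G$ is in particular an arc of $\Gamma(W)$, so $x_i-x_j\le w_{ij}$ is one of the inequalities~\eqref{eq:defining} defining $Q(W)$; hence on $Q(W)$ the equation $x_i-x_j=w_{ij}$ is equivalent to the lone inequality $x_j-x_i\le -w_{ij}$, and the two middle sets coincide. For the third equality I would read the extra constraints themselves as a weighted digraph polyhedron: let $R$ be the $k{\times}k$-matrix whose $(j,i)$-entry equals $-w_{ij}$ for each $(i,j)\in G$ and all of whose remaining entries are $\infty$; this is consistent, since if both $(i,j)$ and $(j,i)$ lie in $G$ then $w_{ij}+w_{ji}=0$ by the standing hypothesis. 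Then $\SetOf{x\in\RR^k}{x_j-x_i\le -w_{ij}\text{ for }(i,j)\in G}=Q(R)$, so by Lemma~\ref{lemma: intersection wgp} the middle set equals $Q(W)\cap Q(R)=Q(W\oplus R)$. Finally I would identify $W\oplus R$ with $W\#G$: away from the positions $(j,i)$ with $(i,j)\in G$ nothing changes, while at such a position $(W\oplus R)_{ji}=\min(w_{ji},-w_{ij})$; here, if $(j,i)$ is an arc of $\Gamma(W)$ the directed $2$-cycle $i\to j\to i$ has weight $w_{ij}+w_{ji}\ge 0$ because $\Gamma(W)$ has no negative cycle, whereas if $(j,i)$ is not an arc then $w_{ji}=\infty$; in either case $-w_{ij}\le w_{ji}$, so the minimum equals $-w_{ij}$, which is exactly the $(j,i)$-entry of $W\#G$. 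Since $W\#G$ is again a weighted digraph matrix, this settles the displayed identities and, with them, the assertion that faces of weighted digraph polyhedra are weighted digraph polyhedra.

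\emph{The equality partition: the evident amalgamations.} Write $\overline E$ for the partition of $[k]$ obtained from $E(W)$ by uniting the parts of $i$ and $j$ for every arc $(i,j)$ of $G$, i.e.\ the partition whose blocks are the connected components of the graph on $[k]$ having as edges those of $E(W)$ together with $\{i,j\}$ for each arc $(i,j)\in G$. I would first show that every block of $\overline E$ lies inside a block of $E(W\#G)$. An arc $(i,j)\in G$ creates in $\Gamma(W\#G)$ the arc $j\to i$ of weight $-w_{ij}$ together with the arc $i\to j$ of weight $w_{ij}$ (this entry is untouched, or reset to $-w_{ji}=w_{ij}$), hence a directed cycle of weight $0$; so $\{i,j\}$ is an edge of $E(W\#G)$ by Lemma~\ref{lemma:zero_cycles}. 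And for an edge $\{a,b\}$ of $E(W)$ the difference $x_a-x_b$ is constant on $Q(W)$, hence on the smaller set $F_G(W)=Q(W\#G)$, so $\{a,b\}$ is an edge of $E(W\#G)$ as well. Since the connected components of $E(W\#G)$ are complete graphs by Lemma~\ref{lemma:dim equality graph}, this yields one containment of blocks.

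\emph{The main obstacle.} It remains to prove the reverse containment, that every block of $E(W\#G)$ lies inside a block of $\overline E$; together with the previous step this gives the asserted equality of partitions. By Lemma~\ref{lemma:zero_cycles} it suffices to show that any directed cycle $C=(v_0,\dots,v_m=v_0)$ of weight $0$ in $\Gamma(W\#G)$ has all of its nodes in a single block of $\overline E$. Each arc $v_{s-1}\to v_s$ of $C$ is of one of two kinds. If $(v_s,v_{s-1})\in G$, then $\{v_{s-1},v_s\}$ is an edge of $G$ and the two nodes share a block of $\overline E$. Otherwise $(v_{s-1},v_s)$ is an arc of $\Gamma(W)$ with $(W\#G)_{v_{s-1}v_s}=w_{v_{s-1}v_s}$, and summing the valid inequalities $x_{v_{t-1}}-x_{v_t}\le(W\#G)_{v_{t-1}v_t}$ around $C$, whose right-hand sides add up to the weight $0$ of $C$, forces each of them---this one included---to be tight on all of $Q(W\#G)=F_G(W)$. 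The genuine difficulty is exactly this second kind: one must still infer that $v_{s-1}$ and $v_s$ share a block of $\overline E$ from the bare fact that the arc $(v_{s-1},v_s)$ of $\Gamma(W)$ is tight on the face $F_G(W)$, i.e.\ rule out cascades in which imposing the $G$-equalities collapses several inequalities of $\Gamma(W)$ simultaneously. I would handle this by replacing $G$ with the set $G'$ of \emph{all} arcs of $\Gamma(W)$ tight on $F_G(W)$---which defines the same face and the same polyhedron $Q(W\#G)$---and then decomposing $C$ along its $G'$-arcs and applying Lemma~\ref{lemma:zero_cycles} to the complementary $\Gamma(W)$-segments; when the tightened inequalities are facet-defining one has $G'=G$ and the conclusion follows as stated. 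Carrying out this cascade control in general is, I expect, the technical heart of the argument.
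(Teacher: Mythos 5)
The paper offers essentially no argument here---it declares the lemma ``immediate''---so for the displayed chain of equalities there is nothing to compare your first paragraph against except the assertion itself. Your derivation is correct and is exactly the bookkeeping the authors suppress: reading the added constraints as a second weighted digraph polyhedron $Q(R)$, invoking Lemma~\ref{lemma: intersection wgp}, and checking $\min(w_{ji},-w_{ij})=-w_{ij}$ via the no-negative-cycle hypothesis. Your one containment of blocks in the partition statement is also fine.

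The ``main obstacle'' you isolate, however, is not merely the technical heart of the argument: it is fatal to the clause as written. Take $k=3$ with arcs $(1,2)$, $(1,3)$, $(3,2)$ of weights $1$, $0$, $1$ and all other entries $\infty$. The digraph is acyclic, $Q(W)$ is full-dimensional, and $E(W)$ consists of three singletons. For $G=\{(1,2)\}$ the face $F_G(W)$ forces $x_3\le x_2+1=x_1\le x_3$, so all three inequalities become tight and $F_G(W)$ is the line $\SetOf{(t,t-1,t)}{t\in\RR}$ with equality partition $\{\{1,2,3\}\}$; equivalently, $\Gamma(W\#G)$ acquires the zero-weight cycle $1\to3\to2\to1$. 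The naive merging prescribed by the lemma yields only $\{\{1,2\},\{3\}\}$. This is precisely the cascade you describe, and it cannot be ruled out. The correct description of $E(W\#G)$ is the one given by Lemma~\ref{lemma:zero_cycles}: its parts are spanned by the zero-weight cycles of $\Gamma(W\#G)$, i.e., one must iterate the merging until the contracted digraph is acyclic---which is exactly condition (ii) of Theorem~\ref{thm:partition}, a condition the partition $\{\{1,2\},\{3\}\}$ visibly violates in the example above. Your proposed repair, replacing $G$ by the set $G'$ of all arcs of $\Gamma(W)$ tight on $F_G(W)$, does make the clause true, since every arc of a zero-weight cycle of $\Gamma(W\#G)$ is tight on the whole face and hence is either a reversed $G$-arc or an arc of $G'$. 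Fortunately the paper only ever exploits this clause through Lemma~\ref{lemma:zero_cycles} and the resulting dimension count, so nothing downstream is damaged.
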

By Lemma~\ref{lemma:dim equality graph} the
dimension of the face $F_G(W)$ equals the size of the partition $E(W\#G)$.

\begin{example}\label{example:face}
  If $W$ is the matrix from Example \ref{example:weighted_digraph_W} and $G$ consists of the single arc $(2,3)$ then 
  we have
  \[
  W\#G \ = \
  \begin{pmatrix}
    1 & 4 & 1 \\
    -1 & 0 & -2 \\
    3 & 2 & 2
  \end{pmatrix} \enspace .
  \]
%  The corresponding graph is displayed in Figure~\ref{figure:face}.
The equality graph $E(W\#G)$ consists of the isolated node $1$, and the nodes $2$ and $3$ are joined by an edge. This reflects that $Q(W\#G)$ is contained in the supporting hyperplane induced by the equality from $G$. Finally, the equality partition is $\{\{1\},\{2,3\}\}$.
\end{example}

\subsection{Braid Cones}
We will now apply our previous results to the situation where the weight function is constantly zero on the arcs.
Then for an arbitrary digraph $\Gamma$ the weighted digraph polyhedron
\[
Q(\Gamma,\0) \ = \ \SetOf{ x \in \RR^k }{ x_i\ \leq\ x_j \text{ for all } (i,j) \in \Gamma }
\]
is a polyhedral cone, the \emph{braid cone} of $\Gamma$ studied by Postnikov, Reiner and Williams \cite{PostnikovReinerWilliams08}.
See, in particular, \cite[\S3.4]{PostnikovReinerWilliams08} for detailed information about their combinatorial structure.
Here we wish to relate braid cones to order polytopes.

All points in the subspace $\RR\1$ are feasible.
Since every cycle has weight zero, applying Lemma \ref{lemma:zero_cycles}\ref{lemma:item:zero_cycles} to the cone $Q(\Gamma,\0)$ yields the
following.
\begin{proposition} \label{prop:cones_components}
  The parts of the equality partition $E(W(\Gamma,\0))$ are exactly the strong components of $\Gamma$. In particular,
  the dimension of the braid cone $Q(\Gamma,\0)$ equals the number of strong components of $\Gamma$.
\end{proposition}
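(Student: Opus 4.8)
The plan is to derive the proposition from Lemma \ref{lemma:zero_cycles}\ref{lemma:item:zero_cycles}, exactly as the sentence preceding it suggests, and then invoke Lemma \ref{lemma:dim equality graph} for the dimension count. So almost all of the work has already been done.

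First I would record the basic setup: with $W = W(\Gamma,\0)$ every directed cycle of $\Gamma(W)$ has weight zero, hence there is no negative cycle, and by Lemma \ref{lemma:feasible} the digraph cone $Q(\Gamma,\0)$ is nonempty and its Kleene star $W^*$ is defined. Since all arc weights vanish, a shortest directed walk from $i$ to $j$ has weight $0$ if one exists and $\infty$ otherwise; thus $w_{ij}^*\in\{0,\infty\}$, with $w_{ij}^*=0$ exactly when $j$ is reachable from $i$ by a directed path in $\Gamma$ (this includes the case $i=j$).

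Next I would fix two distinct nodes $i,j\in[k]$. By Lemma \ref{lemma:zero_cycles}\ref{lemma:item:zero_cycles} the pair $\{i,j\}$ is an edge of the equality graph $E(W)$ if and only if $w_{ij}^*=-w_{ji}^*<\infty$; in the present situation both $w_{ij}^*$ and $w_{ji}^*$ lie in $\{0,\infty\}$, so this is equivalent to $w_{ij}^*=w_{ji}^*=0$, that is, to $j$ being reachable from $i$ and $i$ being reachable from $j$. This is precisely the condition that $i$ and $j$ lie in the same strong component of $\Gamma$. Loops of $\Gamma$ play no role here: a loop joins no distinct pair of nodes, so it contributes nothing to $E(W)$, and it does not alter the strong-component partition either.

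It then remains to assemble the pieces. By Lemma \ref{lemma:dim equality graph} the connected components of $E(W)$ are complete graphs and they partition $[k]$; combined with the equivalence of the previous paragraph this says that the parts of the equality partition $E(W(\Gamma,\0))$ are exactly the node sets of the strong components of $\Gamma$ -- in particular a node lying on no directed cycle is at once an isolated node of $E(W)$ and a singleton strong component. Finally, Lemma \ref{lemma:dim equality graph} equates $\dim Q(\Gamma,\0)$ with the number of parts of this partition, hence with the number of strong components. I expect no real obstacle; the only point to be careful about is the elementary translation between ``the shortest-path weight $w_{ij}^*$ is finite'' and ``$j$ is reachable from $i$'', which is what makes the zero-weight case collapse to plain reachability.
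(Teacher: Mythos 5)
Your proposal is correct and follows exactly the route the paper intends: the paper justifies the proposition by the single sentence preceding it, namely that since every cycle has weight zero, Lemma~\ref{lemma:zero_cycles}\ref{lemma:item:zero_cycles} identifies the equality-graph edges with pairs of nodes in a common directed cycle, i.e.\ in a common strong component, and Lemma~\ref{lemma:dim equality graph} then gives the dimension count. You have simply spelled out the details (the reduction of $w_{ij}^*$ to reachability and the treatment of loops and singleton components) that the paper leaves implicit.
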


Any hyperplane of the form $x_i=x_j$ defines a \emph{split} of the unit cube $[0,1]^k$, i.e., it defines a (regular) subdivision of the unit cube into two subpolytopes; see \cite{HerrmannJoswig:2008}.
Notice that such a split hyperplane does not separate any edge of the unit cube.
Let us look at the map $\kappa$ which sends each face $F$ of the braid cone $Q(\Gamma,\0)$ to the intersection $F \cap [0,1]^k$.
Clearly, this intersection is never empty (unless $F$ is).

Now suppose that $\Gamma$ is acyclic. Then those inequalities which define facets of $Q(\Gamma,\0)$ correspond to the
covering relations of the partially ordered set $P(\Gamma)$ on the node set $[k]$ of $\Gamma$ induced by the arcs.  It
follows that $\kappa(Q(\Gamma,\0))=Q(\Gamma,\0)\cap[0,1]^k$ is the \emph{order polytope} $\Ord(\Gamma)$ of the poset $P(\Gamma)$.  The poset
$P(\Gamma)$ describes the transitive closure of the relation defined on the set $[k]$ by the arcs of $\Gamma$.
Conversely, each finite poset gives rise to a directed graph whose nodes are the elements and the arcs are given by the
covering relations directed, say, upwards.

The order polytope $\Ord(\Gamma)$ contains the points $\0$ and $\1$ as vertices.  Therefore there exists a unique
minimal face which contains both of them; denote this face by $F_{01}$.  Note that the dimension of $F_{01}$ can be any
number between $1$ (if $F_{01}$ is the edge $[\0,\1]$) and $k$ (if the graph $\Gamma$ does not contain any edges).  The
\emph{face figure} of $F_{01}$, written as $\cF_{01}$, is the principal filter of the element $F_{01}$ in the face poset
of the order polytope $\Ord(\Gamma)$.  The subposet $\cF_{01}$ is the face poset of a polytope of dimension $k-\dim
F_{01}-1$.  The face figure $\cF_{01}$ consists of exactly those faces of $\Ord(\Gamma)$ which are not contained in any
facet of the cube $[0,1]^k$.  It is immediate that $\kappa$ maps faces of the braid cone $Q(\Gamma,\0)$ to the faces
of the order polytope $\Ord(\Gamma)$ which lie in the face figure $\cF_{01}$.

\begin{lemma}\label{lem:face_figure}
  If $\Gamma$ is acyclic then the map $\kappa$ is a poset isomorphism from $\cF(Q(\Gamma,\0))$ to the face figure
  $\cF_{01}$ of the face $F_{01}$ of the order polytope $\Ord(\Gamma)$.
\end{lemma}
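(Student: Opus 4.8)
The plan is to prove that $\kappa$ is a bijection onto $\cF_{01}$ which is order-preserving together with its inverse. The excerpt already notes that $\kappa$ lands in $\cF_{01}$ and is order-preserving (intersecting with the fixed cube $[0,1]^k$ cannot reverse a containment $F\subseteq F'$), so what is left is: (i) $\kappa$ is order-reflecting, hence injective; (ii) $\kappa$ is surjective onto $\cF_{01}$. The one geometric fact I would exploit throughout is that $Q(\Gamma,\0)$ is a cone whose lineality space contains $\RR\1$, so each of its faces $F$ is itself a cone invariant under positive scaling and under translations by $\RR\1$; in particular $\operatorname{relint}(F)$ is invariant under both operations.

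I would first record the rescaling observation that every nonempty face $F$ of $Q(\Gamma,\0)$ contains a point in $\operatorname{relint}(F)\cap(0,1)^k$: starting from any $z\in\operatorname{relint}(F)$, the point $\lambda z+t\1$ stays in $\operatorname{relint}(F)$ for all $\lambda>0$ and $t\in\RR$, and a suitable small $\lambda$ together with $t$ chosen to push $\min_i(\lambda z_i+t)$ slightly above $0$ places it in the open cube. Order-reflection then follows at once: if $\kappa(F_1)\subseteq\kappa(F_2)$, choose $x\in\operatorname{relint}(F_1)\cap(0,1)^k$; then $x\in\kappa(F_1)\subseteq\kappa(F_2)\subseteq F_2$, and a face of $Q(\Gamma,\0)$ that contains a relative interior point of $F_1$ must contain all of $F_1$, so $F_1\subseteq F_2$. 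Applying this both ways gives injectivity.

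For surjectivity, take $\tilde F\in\cF_{01}$ and pick $x^*\in\operatorname{relint}(\tilde F)$. The first point is that $x^*\in(0,1)^k$: for each $i$, since $\tilde F$ is not contained in the cube facet $\{x_i=0\}$, that facet meets $\tilde F$ in a proper face, which a relative interior point of $\tilde F$ avoids, so $x^*_i>0$, and symmetrically $x^*_i<1$. Now set $G^*=\{(i,j)\in\Gamma:x^*_i=x^*_j\}$ and $F:=F_{G^*}$, a face of $Q(\Gamma,\0)$. Because $x^*$ lies in the open cube, none of the cube inequalities defining $\Ord(\Gamma)=Q(\Gamma,\0)\cap[0,1]^k$ are tight at $x^*$, so the smallest face of $\Ord(\Gamma)$ through $x^*$ --- namely $\tilde F$ --- is cut out by the cone inequalities $x_i\le x_j$, $(i,j)\in G^*$, alone; hence $\tilde F=\Ord(\Gamma)\cap\{x:x_i=x_j\text{ for }(i,j)\in G^*\}=F_{G^*}\cap[0,1]^k=\kappa(F)$. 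An order-preserving, order-reflecting bijection of posets is an isomorphism, which finishes the proof.

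The step I expect to be the crux is surjectivity, and specifically the reduction ``no cube inequality is active at $x^*$'': the face-figure hypothesis is exactly what guarantees a relative interior point of $\tilde F$ inside the open cube, and one must then check that the minimal face of $\Ord(\Gamma)$ through that point agrees with the minimal face of the cone through it, cut down by $[0,1]^k$. The rescaling lemma is routine once the lineality invariance is spelled out, and the bookkeeping uses only the correspondence between faces of $Q(\Gamma,\0)$ and their sets of tight arcs already introduced before the lemma --- in particular Stanley's theorem is not needed here.
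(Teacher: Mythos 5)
Your proof is correct, but it is organized differently from the paper's. The paper proves the lemma by exhibiting an explicit inverse map $\lambda(G)=\pos(G)+\RR\1$ and checking both compositions: the identity $\lambda(\kappa(F))=F$ gives injectivity, and the observation that a face $G\in\cF_{01}$ is cut out by split equations $x_i=x_j$, which are homogeneous and $\RR\1$-invariant and hence remain valid on $\pos(G)+\RR\1$, gives $\kappa(\lambda(G))=G$ and surjectivity. You instead avoid naming an inverse and argue via relative interior points: the rescaling-and-translation observation places a relative interior point of any face of the cone inside the open cube, which yields order-reflection (hence injectivity), and for surjectivity you use the standard fact that the minimal face of $\Ord(\Gamma)$ through a point of the open cube is cut out by the tight cone inequalities alone. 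The underlying geometry is the same in both arguments -- the lineality direction $\RR\1$ together with positive scaling moves the cone into the cube, and faces in $\cF_{01}$ are determined by the homogeneous inequalities $x_i\leq x_j$ only -- but your version has the advantage of explicitly verifying that $\kappa$ is an isomorphism of \emph{posets} (order-preserving and order-reflecting), a point the paper's proof leaves implicit, at the cost of invoking a little more machinery about relative interiors and minimal faces. Both proofs correctly use only the face/tight-arc correspondence set up before the lemma; neither needs Stanley's theorem.
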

\begin{proof}
  For any face $G\in\cF_{01}$ let $\lambda(G)$ be the cone $\pos(G)+\RR\1$. Since $G$ is a face which is not contained
  in any facet of $[0,1]^k$ it is the intersection of facets of type $x_i\leq x_j$.  These inequalities are homogeneous,
  and so they also hold for $\lambda(G)$.  Those inequalities are tight for $Q(\Gamma,\0)$, and so $\lambda$ defines a map
  from $\cF_{01}$ to $\cF(Q(\Gamma,\0))$.  This also shows that, for any face $F$ of $Q(\Gamma,\0)$ we have
  $\lambda(\kappa(F))=F$ which means that $\kappa$ is one-to-one.  Conversely, let $G$ be a face of $\Ord(\Gamma)$ which
  is contained in $\cF_{01}$.  Then $G$ is defined in terms of split equations of the form $x_i=x_j$.  These equations
  are valid for $\lambda(G)=\pos(G)+\RR\1$, which yields $\kappa(\lambda(G))=G$.  Hence $\kappa$ is surjective, and
  $\lambda$ is the inverse map.
\end{proof}

Stanley gave a concise description of the face lattices of order polytopes in terms of partitions \cite[Theorem 1.2]{Stanley:1986}, and this can be used to derive the following result.
This should be compared with \cite[Proposition 3.5]{PostnikovReinerWilliams08} which also characterizes the faces of the braid cones, but in a different language.
\begin{theorem}\label{thm:partition}
  Let $\Gamma$ be an arbitrary directed graph on the node set $[k]$.
  Then a partition $E$ of $[k]$ is the equality partition of a face of the braid cone $Q(\Gamma,\0)$ if and only if 
  \begin{enumerate}[label = (\roman*)]
  \item for each part $K$ of $E$ the induced subgraph of $\Gamma$ on $K$ is weakly connected, and
  \item the minor of $\Gamma$ which results from simultaneously contracting each part of $E$ does not contain any directed cycle.
  \end{enumerate}
\end{theorem}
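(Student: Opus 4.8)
The plan is to reduce everything to the acyclic case, where Lemma~\ref{lem:face_figure} and Stanley's theorem \cite[Theorem~1.2]{Stanley:1986} apply directly, and then transport the characterization back through the strong components of $\Gamma$. First I would recall from Proposition~\ref{prop:cones_components} that the equality partition $E(W(\Gamma,\0))$ of the cone $Q(\Gamma,\0)$ itself is the partition into strong components of $\Gamma$. A general face of $Q(\Gamma,\0)$ has the form $F_G(\Gamma,\0)$ for a subgraph $G$, and by Lemma~\ref{lemma: face of wdp} its equality partition is obtained from the partition into strong components by merging the parts linked by arcs of $G$. Conversely, turning on an inequality $x_i \le x_j$ as an equality is exactly the operation of identifying $i$ and $j$. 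So the equality partitions of faces of $Q(\Gamma,\0)$ are precisely the partitions $E$ that \emph{refine to} the strong-component partition in the following sense: $E$ is a coarsening of the strong-component partition of $\Gamma$ obtainable by iteratively contracting arcs. I would phrase this as: $E$ is the equality partition of some face if and only if each part of $E$ is a union of strong components that induces a weakly connected subgraph, and the contraction of $\Gamma$ along $E$ has no new zero-weight cycle, i.e.\ no directed cycle at all after further identifications collapse. This is essentially conditions (i) and (ii), but I still need to justify the exact form.

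The cleanest route for the forward direction is: pass to the acyclic reduction $\acyclic{\Gamma}$ of $\Gamma$, i.e.\ the digraph on the strong components of $\Gamma$ with an arc between two components whenever $\Gamma$ has one; this is acyclic, and $Q(\Gamma,\0)$ is, modulo the lineality coming from identifying nodes within each strong component, the digraph cone $Q(\acyclic{\Gamma},\0)$. By Lemma~\ref{lem:face_figure} the face poset of $Q(\acyclic{\Gamma},\0)$ is isomorphic to the face figure $\cF_{01}$ of $\Ord(\acyclic{\Gamma})$, and by \cite[Theorem~1.2]{Stanley:1986} faces of $\Ord(\acyclic{\Gamma})$ correspond to certain partitions of the node set of $\acyclic{\Gamma}$ into blocks that are connected in the comparability graph and whose contraction yields an acyclic poset; the face-figure condition (not lying in a cube facet) removes exactly the singleton-block subtleties so that one is left with partitions whose blocks are weakly connected in $\acyclic{\Gamma}$ and whose contraction of $\acyclic{\Gamma}$ is acyclic. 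Pulling a block of such a partition back to $\Gamma$ (replacing each strong component by its node set) gives a weakly connected induced subgraph — since each strong component is strongly, hence weakly, connected and the block was connected in $\acyclic{\Gamma}$ — which is condition (i); and contracting $E$ in $\Gamma$ gives the same minor as contracting the induced partition in $\acyclic{\Gamma}$, which is acyclic, giving condition (ii).

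For the converse I would start from a partition $E$ satisfying (i) and (ii) and build a face realizing it. Condition (ii) forces each part $K$ of $E$ to be a union of strong components of $\Gamma$ (a directed cycle inside $K$ would survive the contraction as a loop, i.e.\ a directed cycle, in the minor). Thus $E$ descends to a partition $\bar E$ of the node set of $\acyclic{\Gamma}$; by (i) each block of $\bar E$ is weakly connected in $\acyclic{\Gamma}$, and by (ii) contracting $\bar E$ in $\acyclic{\Gamma}$ is acyclic. Hence $\bar E$ lies in the face figure $\cF_{01}$ by Stanley's characterization, so by Lemma~\ref{lem:face_figure} it is the equality partition of a face $F$ of $Q(\acyclic{\Gamma},\0)$; lifting $F$ to $Q(\Gamma,\0)$ — again just adding the lineality directions within strong components — produces a face whose equality partition is $E$, using Lemma~\ref{lemma: face of wdp} to track how the parts merge. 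The main obstacle, and the step I would spend the most care on, is translating Stanley's partition description of $\cF_{01}$ into exactly the "weakly connected blocks, acyclic contraction" form: Stanley works with the comparability graph of the poset $P(\acyclic{\Gamma})$ rather than with $\acyclic{\Gamma}$ itself, and one must check that requiring connectivity in the comparability graph of the poset is equivalent, for blocks that do not lie in a cube facet, to weak connectivity in the Hasse digraph $\acyclic{\Gamma}$ and that the "no cycle after contraction" condition matches Stanley's requirement that the quotient relation still be a partial order. Once that dictionary is nailed down, both implications are short.
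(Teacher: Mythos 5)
Your proposal follows essentially the same route as the paper's proof: pass to the acyclic reduction (condensation) of $\Gamma$, translate faces of the digraph cone into the face figure $\cF_{01}$ of the order polytope via Lemma~\ref{lem:face_figure}, and invoke Stanley's partition description, with (i) being the connectivity condition and (ii) the compatibility condition. One small repair to your converse direction: the reason (ii) forces each part of $E$ to be a union of strong components is not that a directed cycle inside a part survives contraction as a loop (it does not --- contracting the strong-component partition itself destroys all cycles), but that a strong component meeting two distinct parts produces a closed directed walk through two distinct nodes of the minor, hence a genuine directed cycle there.
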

\begin{proof}
  Let us first assume that $\Gamma$ is acyclic.  By Lemma~\ref{lemma: face of wdp}, together with the fact that every cycle has weight zero, the faces of $Q(\Gamma,\0)$ are given in terms of the equality partitions of $[k]$.  In the acyclic case Lemma~\ref{lem:face_figure} translates faces of
  $Q(\Gamma,\0)$ into faces of the order polytope $\Ord(\Gamma)$ which contain the special face $F_{01}$.  The property
  (i) is the connectedness, and property (ii) is the `compatibility' condition in Stanley's result \cite[Theorem
  1.2]{Stanley:1986}.

  We now turn to the general case.  If $\Gamma$ has directed cycles we consider its \emph{acyclic reduction}. The latter
  graph, occasionally also called `condensation' in the literature, is obtained by identifying the nodes in each strong
  component. Since strong components are weakly connected and gather all the directed cycles the same reasoning applies
  as before.  It is easy to see that this digraph is indeed acyclic \cite[Corollary 5]{Sharir:81}.  Each partition of
  $[k]$ which describes a face of $Q(\Gamma,\0)$ refines the partition by strong components.
\end{proof}
Notice that there are always two partitions which trivially satisfy the conditions above: The partition of $[k]$ by weak
components corresponds to the unique minimal face (which is the lineality space); the partition by strong components
corresponds to the entire cone.

\begin{figure}[ht]
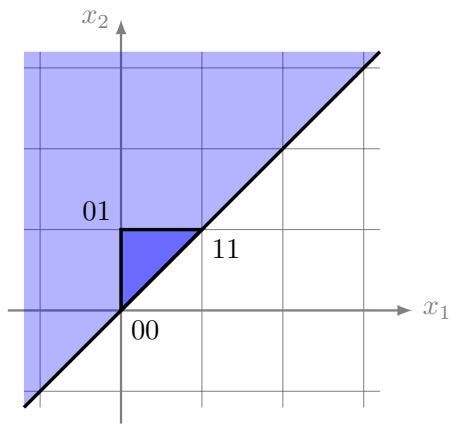

  \centering
  \resizebox{0.4\textwidth}{!}{\inclpic{ex21_cone_order}}
  \caption{Braid cone of a single arc and the corresponding order polytope; see Example~\ref{example:order_polytope}}
  \label{figure:cone_order_polytope}
\end{figure}

\begin{figure}[ht]
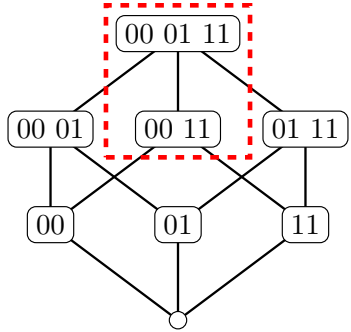

  \centering
 \resizebox{0.3\textwidth}{!}{\inclpic{ex22_digraph_order}}
  \caption{Hasse diagram of the triangle $\conv\{00,01,11\}$ with face figure of $\conv\{00,11\}$ marked}
  \label{figure:digraph_order_polytope}
\end{figure}

\begin{example}\label{example:order_polytope}
  The smallest non-trivial case is $k=2$, and $\Gamma$ is the directed graph with two nodes, labeled $1$ and $2$, with
  one arc from $1$ to $2$.  The order polytope is the triangle $\conv\{00,01,11\}$, and the face $F_{01}$ is the edge
  from $00$ to $11$.  The braid cone $Q(\Gamma,\0)$ is the linear half-space $x_1 \leq x_2$, and its lineality space is
  $\RR\1$.  The braid cone and the order polytope are shown in Figure~\ref{figure:cone_order_polytope}.  The node set
  of $\Gamma$ only admits the two trivial partitions.  The Hasse diagram of the face lattice of $\Ord(\Gamma)$ and the
  face figure $\cF_{01}$ are displayed in Figure~\ref{figure:digraph_order_polytope}.
\end{example}

\begin{example}
  Figure \ref{figure: unweighted graph representing cone} shows a digraph on eight nodes and its acyclic reduction,
  which has six nodes.  Figure \ref{figure: Hasse-Diagram cone} shows the Hasse diagram of the braid cone.
  That cone is $6$-dimensional with a $1$-dimensional lineality space.  Modulo its lineality space every cone is
  projectively equivalent to a pyramid over its face at infinity.  In this case the braid cone inherits the
  combinatorics of a $4$-simplex.
\end{example}

\begin{figure}[ht]
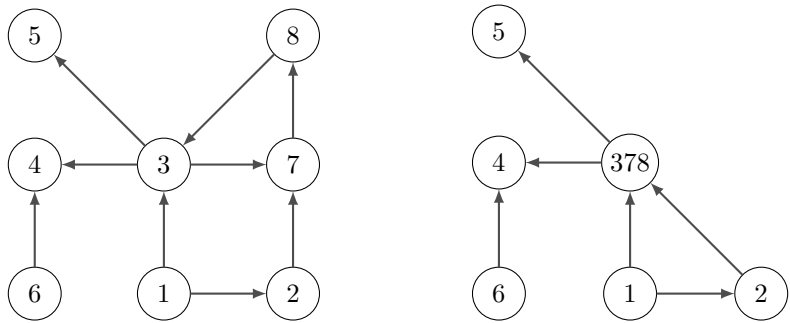

  \centering
  \inclpic{ex41_cone1}
  \hspace*{4em}
  \inclpic{ex41_cone2}
  \caption{Digraph (left) and its acyclic reduction (right)} 
  \label{figure: unweighted graph representing cone}
\end{figure}

\begin{figure}[ht]
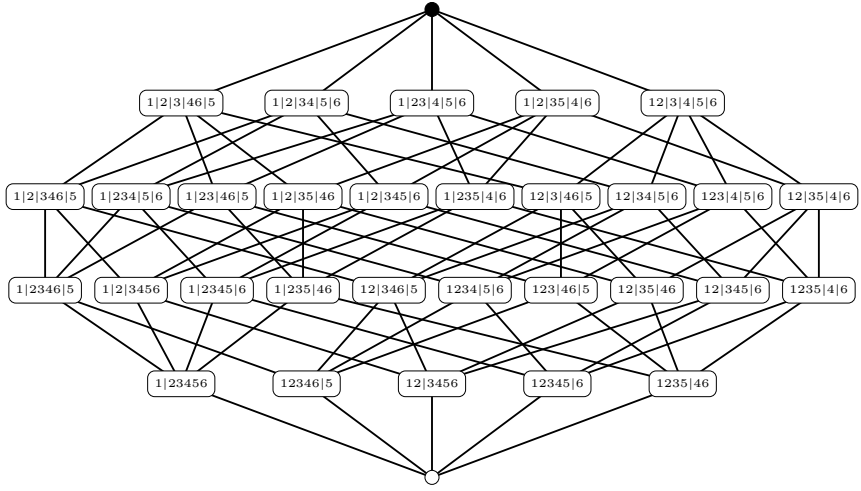

  \centering
  \resizebox{0.75\textwidth}{!}{\inclpic{hasse-diag-part-cone}}
  \caption{Hasse diagram of the braid cone corresponding to the graph in Figure \ref{figure: unweighted graph
      representing cone}.  For improved readability the node $378$ of the acyclic reduction is represented as $3$}
  \label{figure: Hasse-Diagram cone}
\end{figure}

\begin{remark}
  Two distinct digraphs on the node set $[k]$ may induce the same braid cone.  This is the case if and only if they
  induce the same poset.  For instance, in Figure \ref{figure: unweighted graph representing cone} the arc $(1,3)$ in the graph on the
  left and the arc $(1,378)$ in the graph on the right are redundant.  In the acyclic reduction (on the right) we obtain
  a tree with directed edges.  Every tree on $\ell$ nodes has $\ell-1$ edges, and the braid cone is a simplex cone of
  dimension $\ell-1$.
\end{remark}

\subsection{Weyl--Minkowski decomposition}
Now we want to use the Theorem~\ref{thm:partition} on braid cones to describe digraph polyhedra for arbitrary weights.
Again we pick a $k{\times}k$-matrix $W$, and we assume that $Q(W)$ is feasible. The classical theorem of Weyl and
Minkowski (cf. \cite[\S 1]{Ziegler95}) states that any ordinary polyhedron $Q$ decomposes as the Minkowski sum
\begin{equation}\label{eq:Weyl-Minkowski}
  Q \ = \ P + L + C \enspace ,
\end{equation}
where $P$ is a polytope, $L$ is a linear subspace and $C$ is a pointed polyhedral cone.  An ordinary polyhedral cone is
\emph{pointed} if it does not contain any affine line (and thus no affine subspace of positive dimension).  In the
decomposition~\eqref{eq:Weyl-Minkowski} the maximal linear subspace $L$ is unique, while, in general, there may be many
choices for $C$ and $P$.  The \emph{recession cone} (which is again unique) is the Minkowski sum of the two unbounded
parts, $L$ and $C$.  The \emph{pointed part} is the Minkowski sum $P+C$ (which is unique up to an affine
transformation).  Next we will decompose a weighted digraph polyhedron in this fashion. We decompose $W$ into the graph
$\Gamma$ and the weight function $\gamma$ such that $W=W(\Gamma,\gamma)$.

\begin{lemma} \label{lemma:recession_cone} The recession cone of the weighted digraph polyhedron $Q(\Gamma,\gamma)$ is
  the braid cone~$Q(\Gamma,\0)$, and $Q(W(\Gamma,\0)\#\Gamma)$ forms the maximal linear subspace.
\end{lemma}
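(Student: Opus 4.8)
The plan is to split the statement into its two assertions and treat each with the standard structure theory of polyhedra, invoking only the lemmas already at our disposal.

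First I would handle the recession cone. Since $Q(\Gamma,\gamma)$ is feasible (Lemma~\ref{lemma:feasible}), the recession cone of a polyhedron cut out by finitely many inequalities $x_i-x_j\leq w_{ij}$ is obtained by replacing every right-hand side by $0$ --- this is the textbook description underlying the Weyl--Minkowski decomposition~\eqref{eq:Weyl-Minkowski}. The resulting system $x_i-x_j\leq 0$ over all arcs $(i,j)\in\Gamma$ is precisely the defining system of $Q(\Gamma,\0)$, so $\operatorname{rec}Q(\Gamma,\gamma)=Q(\Gamma,\0)$. Loops $(i,i)$ only contribute the void inequality $0\leq 0$ and can be ignored throughout.

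Next I would identify the lineality space. The maximal linear subspace contained in a nonempty polyhedron coincides with the lineality space of its recession cone, i.e.\ with $\operatorname{rec}Q\cap(-\operatorname{rec}Q)$, again by the uniqueness assertions in~\eqref{eq:Weyl-Minkowski}. Combined with the first part this gives $L=Q(\Gamma,\0)\cap\bigl(-Q(\Gamma,\0)\bigr)$. Using $-Q(W)=Q(\transpose{W})$ and Lemma~\ref{lemma: intersection wgp}, $L=Q\bigl(W(\Gamma,\0)\oplus\transpose{W(\Gamma,\0)}\bigr)$; its defining matrix has a zero in position $(i,j)$ exactly when $(i,j)$ or $(j,i)$ is an arc of $\Gamma$, and $\infty$ otherwise, so $L=\smallSetOf{x\in\RR^k}{x_i=x_j\text{ for every arc }(i,j)\in\Gamma}$. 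It then remains to check that this matrix is $W(\Gamma,\0)\#\Gamma$. First, the operation is well defined: whenever both $(i,j)$ and $(j,i)$ are arcs one has $w_{ij}+w_{ji}=0+0=0$, which is the compatibility condition in the definition of $W\#G$. Second, by that definition $W(\Gamma,\0)\#\Gamma$ arises from $W(\Gamma,\0)$ by replacing $w_{ji}$ with $-w_{ij}=0$ for each arc $(i,j)$, so its finite entries --- all equal to $0$ --- sit exactly at the arcs of $\Gamma$ together with their reverses, matching the support found above. Hence $Q(W(\Gamma,\0)\#\Gamma)=L$. (Alternatively, Lemma~\ref{lemma: face of wdp} identifies $Q(W(\Gamma,\0)\#\Gamma)$ with the face $F_\Gamma(\Gamma,\0)$ of the cone $Q(\Gamma,\0)$ on which all defining inequalities are tight, which is its unique minimal face, i.e.\ the lineality space.)

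There is no genuine obstacle here; the argument is essentially routine. The only points that deserve a moment's care are verifying that the $\#$-operation is legitimate in this case (the $w_{ij}+w_{ji}=0$ requirement), matching the edge supports of the two matrices, and recalling that the lineality space of a polyhedron is exactly that of its recession cone rather than anything larger.
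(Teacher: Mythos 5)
Your proof is correct and follows essentially the same route as the paper: the recession cone is obtained by zeroing out the right-hand sides, and the lineality space is identified as the set of $t$ with $\pm t$ both in $Q(\Gamma,\0)$, which forces $x_i=x_j$ on every arc. Your extra care in matching the support of $W(\Gamma,\0)\oplus\transpose{W(\Gamma,\0)}$ with that of $W(\Gamma,\0)\#\Gamma$ and in checking the $w_{ij}+w_{ji}=0$ compatibility is a harmless elaboration of what the paper leaves implicit.
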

\begin{proof}
  Let $x$ be some point in the recession cone of $Q$. Then there exists a vector $t$ such that
  $x+\lambda t\in Q$ for all $\lambda\geq 0$.  This means that
  \[
  x_i-x_j+\lambda(t_i-t_j)\ \leq \ w_{ij}  \qquad \text{ for all } (i,j)\in \Gamma \text{ and } \lambda\geq 0 \enspace .
  \]
  This forces $t_i-t_j \leq 0$ for all $(i,j)\in \Gamma$, and so $t$ lies in $Q(\Gamma,\0)$.  The reverse inclusion is similar, and we conclude that the braid cone $Q(\Gamma,\0)$ is the recession cone of $Q$.

  Again let $t \in Q(\Gamma,\0)$.  Then its negative $-t$ is also contained in $Q(\Gamma,\0)$ if and only if
  \[
  t_i - t_j \ = \ 0 \qquad \text{for all } (i,j)\in \Gamma \enspace 
  \]
  if and only if $t \in Q(W(\Gamma,\0)\#\Gamma)$.  We infer that the braid cone $Q(W(\Gamma,\0)\#\Gamma)$ forms
  the maximal linear subspace of $Q$.
\end{proof}

As a corollary we obtain a slight generalization of \cite[Corollary 12]{DevelinSturmfels:2004}.

\begin{corollary}\label{coro:boundedness_wdp}
  The weighted digraph polyhedron $Q(\Gamma, \gamma)$ is bounded in $\RR^d / \RR\1$ if and only if $\Gamma$ consists of one strong component.
\end{corollary}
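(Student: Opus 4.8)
The plan is to reduce the assertion to a statement about the recession cone of $Q(\Gamma,\gamma)$ and then feed it into the two preceding structural results. First I would record the elementary fact that, under the linear projection $\pi\colon\RR^k\to\RR^k/\RR\1$, the image $\pi(Q)$ of any polyhedron $Q$ is again a polyhedron (Fourier--Motzkin), and its recession cone equals $\pi(\operatorname{rec}(Q))$. Consequently $\pi(Q)$ is bounded if and only if $\operatorname{rec}(Q)\subseteq\ker\pi=\RR\1$. This is the only place a word of care is needed: one must know that $\pi(Q)$ is closed so that "bounded" is detected by its recession cone, and this is exactly what the Fourier--Motzkin argument supplies.

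Next I would apply this to $Q=Q(\Gamma,\gamma)$. By Lemma~\ref{lemma:recession_cone} the recession cone of $Q(\Gamma,\gamma)$ is the digraph cone $Q(\Gamma,\0)$, so boundedness of $Q(\Gamma,\gamma)$ in $\RR^k/\RR\1$ is equivalent to $Q(\Gamma,\0)\subseteq\RR\1$. Since every constant vector satisfies all inequalities $x_i\leq x_j$, the subspace $\RR\1$ is always contained in $Q(\Gamma,\0)$; hence the inclusion $Q(\Gamma,\0)\subseteq\RR\1$ is in fact an equality, which holds precisely when $\dim Q(\Gamma,\0)=1$.

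Finally I would invoke Proposition~\ref{prop:cones_components}, according to which $\dim Q(\Gamma,\0)$ equals the number of strong components of $\Gamma$. Thus $\dim Q(\Gamma,\0)=1$ exactly when $\Gamma$ has a single strong component, and chaining the equivalences above yields the corollary. I do not anticipate a genuine obstacle here; the statement is essentially a corollary of Lemma~\ref{lemma:recession_cone} and Proposition~\ref{prop:cones_components}, the mild subtlety being the passage between "taking the recession cone" and "projecting modulo $\RR\1$".
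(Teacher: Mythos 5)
Your argument is correct and follows essentially the same route as the paper: both reduce boundedness modulo $\RR\1$ to the recession cone being exactly $\RR\1$, identify that recession cone as the digraph cone $Q(\Gamma,\0)$ via Lemma~\ref{lemma:recession_cone}, and then apply Proposition~\ref{prop:cones_components} to equate its dimension with the number of strong components. Your extra care about why boundedness of the projected polyhedron is detected by its recession cone is a reasonable elaboration of a step the paper leaves implicit, but it does not change the substance of the proof.
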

\begin{proof}
If $\Gamma$ has only one strong component, then the recession cone $(\Gamma, \0)$ is exactly the one-dimensional lineality space $\RR\1$ by Proposition~\ref{prop:cones_components}.  Hence, $Q(\Gamma, \gamma)$ is bounded in $\RR^d / \RR\1$.
Otherwise, the recession cone is higher-dimensional and the weighted digraph polyhedron is unbounded.
\end{proof}

Our next goal is to describe a minimal system of generators for a braid cone.  Recall that a pointed cone is
projectively equivalent to a pyramid over its far face.  The minimal generators of a pointed cone correspond to
the vertices of the far face.  For any subset $K \subseteq [k]$, let $\chi(K)\in \RR^k$ be the characteristic vector.
That is, the $i$th coordinate of $\chi(K)$ is one if $i\in K$, and it is zero otherwise.  With this notation, e.g., we
have $\chi([k])=\1$ and $\chi(\emptyset)=\0$.

\begin{proposition}\label{prop:rays_of_cone}
  A minimal system of generators of the pointed part of the braid cone $Q(\Gamma,\0)$ is given by the vectors
  $\chi(K)$ with $K\subseteq [k]$ so that the induced subgraph on $K$ is connected, its complement in its weak component in $\Gamma$ 
  is also connected and every arc in the cut-set of this partition is directed from $[k]\setminus K$ to $K$.
\end{proposition}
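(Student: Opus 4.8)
The plan is to identify the listed vectors with the extreme rays of the pointed part of $Q(\Gamma,\0)$. Recall that a minimal system of generators of a pointed polyhedral cone is exactly a transversal of its set of extreme rays, and that by Lemma~\ref{lemma:recession_cone} the lineality space $L$ of $Q(\Gamma,\0)$ is $Q(W(\Gamma,\0)\#\Gamma)$, i.e.\ the set of vectors that are constant on each weak component of $\Gamma$; thus $\dim L=m$, the number of weak components. Say that $K\subseteq[k]$ is \emph{closed} if every arc of $\Gamma$ meeting both $K$ and $[k]\setminus K$ runs from $[k]\setminus K$ into $K$; equivalently $\chi(K)\in Q(\Gamma,\0)$, since the defining inequality $\chi(K)_i\le\chi(K)_j$ can only fail on an arc leaving $K$. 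A set $K$ occurring in the statement is then a closed set contained in a single weak component $C$ with $K$ and $C\setminus K$ both nonempty and weakly connected. I would establish (i) that each such $\chi(K)$ spans an extreme ray of $Q(\Gamma,\0)$ modulo $L$, and (ii) that the $\chi(K)$ together with $L$ generate $Q(\Gamma,\0)$; these two facts give the proposition.

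For (i): closedness of $K$ forces every strong component of $\Gamma$ to lie inside one of $K$, $C\setminus K$, or another weak component $C'$, since a directed path inside a straddling strong component would contain an arc leaving $K$. Hence, by Proposition~\ref{prop:cones_components}, the equality partition $E(W(\Gamma,\0))$ into strong components refines the partition $\mathcal{P}=\{K,\,C\setminus K\}\cup\{C':C'\ne C\}$. Let $G$ be the set of arcs of $\Gamma$ with both endpoints in the same block of $\mathcal{P}$. Since each block of $\mathcal{P}$ is weakly connected, Lemma~\ref{lemma: face of wdp} shows the equality partition of the face $F:=F_G(W(\Gamma,\0))$ is exactly $\mathcal{P}$, so $\dim F=m+1$ by Lemma~\ref{lemma:dim equality graph}. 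As $C$ is weakly connected and $\emptyset\ne K\subsetneq C$, there is a cut arc, so $-\chi(K)\notin Q(\Gamma,\0)$ and $F$ is not a linear subspace; combined with $L\subseteq F$, $\dim F=\dim L+1$, and $\chi(K)\in F\setminus L$, this forces $F=L+\RR_{\ge0}\chi(K)$. Thus $L+\RR_{\ge0}\chi(K)$ is a face of $Q(\Gamma,\0)$, i.e.\ $\chi(K)$ spans an extreme ray modulo $L$.

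For (ii): given $x\in Q(\Gamma,\0)$ with distinct coordinate values $v_1<\dots<v_p$, write $x=v_1\1+\sum_{t=2}^p(v_t-v_{t-1})\,\chi(K_t)$ with $K_t=\{i:x_i\ge v_t\}$; each $K_t$ is closed, because an arc from $K_t$ to its complement would force $v_t\le x_i\le x_j<v_t$. It remains to express each $\chi(K_t)$ through the vectors of the statement plus $L$. First split $K_t$ into the node sets of its connected components: each is again closed and lies in one weak component, and those that equal an entire weak component contribute an element of $L$. For a closed connected $K'$ strictly contained in a weak component $C$, whose complement $C\setminus K'$ has connected components $L^{(1)},\dots,L^{(r)}$, each $C\setminus L^{(s)}$ is closed, connected, with connected complement $L^{(s)}$, and one checks coordinatewise that $\chi(K')=\sum_{s=1}^r\chi(C\setminus L^{(s)})-(r-1)\,\chi(C)$ with $\chi(C)\in L$. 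Hence $Q(\Gamma,\0)=L+\pos\{\chi(K):K\text{ as in the statement}\}$. This positive hull lies in the nonnegative orthant, so it is pointed and therefore a pointed part of $Q(\Gamma,\0)$; its extreme rays lie among the $\chi(K)$, and by (i) every such $\chi(K)$ spans one. Since distinct admissible $K$ yield non-proportional $0/1$ vectors, $\{\chi(K)\}$ is precisely the set of extreme rays, hence a minimal generating set.

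The main obstacle will be step (ii): an arbitrary level set $K_t$ is merely closed, and converting it into sets that are simultaneously closed, connected, and have connected complement requires the two reductions above, with some bookkeeping of which pieces land in $L$. The dimension count in (i) is the other delicate point, where the key observation is that closedness prevents a strong component from straddling the cut between $K$ and $C\setminus K$.
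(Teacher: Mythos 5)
Your proof is correct, but it takes a genuinely different route from the paper's. The paper's argument is a short application of Theorem~\ref{thm:partition}: a minimal non-trivial face of $Q(\Gamma,\0)$ is the lineality space plus a single ray, hence corresponds to a partition with $\ell+1$ parts satisfying the two conditions there; such a partition must keep $\ell-1$ weak components intact and split the remaining one into two weakly connected pieces $K$ and $K_u\setminus K$ with all cut arcs pointing into $K$, and $\chi(K)$ generates the pointed part of that face. Classifying all edges of the cone in this way yields minimality and the spanning property simultaneously (by Minkowski's theorem a cone is generated by its lineality space together with one vector per minimal non-trivial face), so the paper never writes down an explicit generation argument. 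You instead prove extremality by hand --- building the face $F_G$ from the intra-block arcs, computing its equality partition and dimension via Lemma~\ref{lemma: face of wdp} and Lemma~\ref{lemma:dim equality graph}, with the key observation that closedness prevents a strong component from straddling the cut --- and then prove spanning constructively via the level-set decomposition $x=v_1\1+\sum_t(v_t-v_{t-1})\chi(K_t)$ followed by the reduction of an arbitrary closed set to admissible ones through the identity $\chi(K')=\sum_s\chi(C\setminus L^{(s)})-(r-1)\chi(C)$. Your version is longer but self-contained and explicitly constructive (it exhibits a conic combination for every feasible point), whereas the paper's is shorter but leans on the Stanley-based face classification. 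One small point worth tightening: at the very end, non-proportionality of the $0/1$ vectors alone does not give minimality of a generating set of a pointed cone; what actually does the work is your part (i), since $L+\RR_{\ge0}\chi(K)$ being a face of $Q(\Gamma,\0)$ forces $\RR_{\ge0}\chi(K)$ to be an extreme ray of the pointed part you constructed.
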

\begin{proof}
  Let $K_1, \ldots, K_{\ell}$ be the weak components of $\Gamma$.
  In particular, by applying Proposition~\ref{prop:cones_components} to $Q(W(\Gamma,\0)\#\Gamma)$, the dimension of the lineality space of $Q(\Gamma,\0)$ equals $\ell$.
  Let $F$ be a minimal non-trivial face of the cone $Q(\Gamma,\0)$.
  This is a Minkowski sum of the lineality space with a single ray.
  By Theorem~\ref{thm:partition} the latter corresponds to a partition with $\ell + 1$ parts.
  Among these exactly $\ell-1$ parts are weak components of $\Gamma$, while the remaining weak component is split into two.
  Let us assume that the remaining component decomposes as $K_u = K \cup (K_u\setminus K)$, where every arc in the cut-set is directed from $K_u \setminus K$ to $K$.
  The characteristic vectors $\chi(K_i)$ for $i \in [\ell]$ linearly span the lineality space of $Q(\Gamma,\0)$, while $\chi(K)$ generates the pointed part of $F$.
\end{proof}

\subsection{Envelopes and duality}
We now turn to the construction of a special class of digraph polyhedra which were introduced by Develin and Sturmfels
for studying tropical convexity from the viewpoint of geometric combinatorics \cite{DevelinSturmfels:2004}.  For a
$d{\times}n$-matrix $V$ with coefficients in $\TT_{\min}=\RR\cup\{\infty\}$ we look at the ordinary polyhedron
\begin{align*}
  \envelope(V) \ &= \ \SetOf{(y,z)\in\RR^d\times\RR^n}{y_i - z_j \leq v_{ij} \text{ for all } i\in[d] \text{ and } j\in[n]}\\
                 &= \ \SetOf{(y,z)\in\RR^d\times\RR^n}{y_i - z_j \leq v_{ij} \text{ for all } (i,j)\in \Bgraph} \enspace ,
\end{align*}
where
\begin{equation}\label{eq:finite}
  \Bgraph(V) \ = \ \SetOf{(i,j)\in [d]\times[n]}{v_{ij} \neq \infty}
\end{equation}
is a (bipartite) directed graph recording the finite entries of $V$.  We call $\envelope(V)$ the \emph{envelope} of the
matrix $V$.  We may see the envelope as a weighted digraph polyhedron via the matrix $(d+n)\times(d+n)$-matrix $W$
which is defined as
\begin{equation}\label{eq:W}
W \ =\ 
\left(
\begin{array}{cc}
  \infty_{d\times d} & V \\
  \infty_{n\times d} & \infty_{n\times n} 
\end{array} 
\right) \enspace .
\end{equation}
Up to an obvious relabeling of the nodes $\Bgraph(V)$ is the same as $\Gamma(W)$ for the matrix $W$ defined above, and thus we
can identify $\envelope(V)$ with $Q(W)$.  Applying Lemma \ref{lemma:recession_cone} and Proposition \ref{prop:rays_of_cone} to the envelope we obtain the following.
\begin{corollary}\label{coro:generators_cone_envelope}
  The minimal generators of the pointed part of the recession cone of the envelope are given by the partitions $D' \dcup D''=[d]$ and $N' \dcup N''=[n]$  so that 
  \begin{enumerate}[label = (\roman*)]
  \item the induced subgraph on $D'{\dcup}N'$ has the same number of weak components as $\Bgraph$,
  \item \label{item:connected_compo_generator} the induced subgraph on $D''{\dcup}N''$ is connected, and
  \item there are no arcs from $D''$ to $N'$.
  \end{enumerate}
  The characteristic vector of $D''{\dcup}N''$ now yields one such generator. 
\end{corollary}
Similarly we obtain from Proposition~\ref{prop:rays_of_cone} the following corollary which will be helpful in section~\ref{sec:projective}.
A ray can be scaled modulo $\1$ so that it has only non-negative entries and at least one zero entry.
Then the \emph{support} of the ray is the set of indices of the non-zero entries.
We keep the notation of the former corollary and consider a face of the envelope $\envelope(V)$ defined by the graph $G$ that contains a minimal generator with support $D''{\dcup}N''$. Notice that the arcs of $\Gamma(W\#G)$ which are not arcs of $B$ are arcs from $N'$ to $D'$ or from $N''$ to $D'$ or from $N''$ to $D''$. That is, there are no arcs from $N'$ to $D''$.

\begin{corollary}\label{coro:inf_env}
  Let $M$ be the set of column indices $j$ of the matrix $V$ such that $v_{ij} = \infty$ for all $i \in D''$.
  Then $M$ equals $N'$, and none of the shortest paths in $\Gamma(W\#G)$ between any two nodes in $D'$ contains a node in $D'' \dcup ([n] \setminus M)$.
\end{corollary}
\begin{proof}
  Observe that $M$ is exactly the subset of the nodes in $[n]$ without an arc between $D''$ and $M$ in $\Gamma(W\#G)$.
  Hence, we obtain $N' \subseteq M$ and with Corollary \ref{coro:generators_cone_envelope}\ref{item:connected_compo_generator} even $N' = M$.
  This yields $[n] \setminus M = N''$. Hence, by Proposition~\ref{prop:rays_of_cone}, there is no arc from $[n] \setminus M$ to $D'$ in $\Gamma(W\#G)$.
  This implies that every shortest path between two nodes in $D'$ avoids the set $D'' \dcup ([n] \setminus M)$.
\end{proof}

The graph $\Bgraph(V)$ has two kinds of nodes, those which correspond to the rows and those which represent columns of
$V$. In our drawings, like Figure~\ref{figure:bipartite_graph_recession_cone}, we show row nodes as rectangles and
column nodes as circles.  Moreover, we always draw the row nodes above the column nodes.  Therefore, if we want to
distinguish them we sometimes talk about the \emph{top} and the \emph{bottom shore} of the bipartite graph.

\begin{figure}[ht]
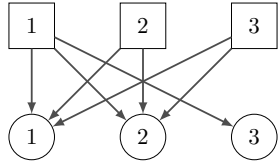

  \centering
  \resizebox{0.25\textwidth}{!}{\inclpic{ex_envelope_rays}}
  \caption{Bipartite digraph $\Bgraph(V)$ for the matrix in Example \ref{example:envelope_with_infty}} 
  \label{figure:bipartite_graph_recession_cone}
\end{figure}

\begin{example}\label{example:envelope_with_infty}
% polymake output of VERTICES (reordered by hand); see ex_envelope_with_infty.pl
%
% 1 0 1 0 0 0 0
% 1 -2 -1 0 0 -2 -2
%
% 0 -1 -1 -1 0 -1 -1
% 0 0 0 0 0 1 0
% 0 0 0 0 0 0 1
%
% 0 0 0 -1 0 0 0
% 0 0 -1 0 0 0 0
% 0 -1 0 0 0 0 -1
  For $d=n=3$ consider the $3{\times}3$-matrix
  \[
  V \ = \ \begin{pmatrix} 0 & 0 & 0 \\ 1 & 1 & \infty \\ 0 & 2 & \infty \end{pmatrix} \enspace .
  \]
  The lineality space of the envelope $\envelope(V)$ is spanned by $\1$.  The quotient $\envelope(V)/\1$ is
  $5$-dimensional, and it has exactly two vertices: ${(0,1,0;0,0,0)}$ and ${(0,1,2;2,0,0)}$.  Its recession cone has six
  minimal generators, which arise from partitioning the bipartite graph $\Bgraph(V)$, which is a subgraph of $K_{3,3}$,
  into two induced subgraphs which meet the criteria of Corollary~\ref{coro:generators_cone_envelope}, see Figure~\ref{figure:bipartite_graph_recession_cone}.  The sets of the
  form $D'' \dcup N''$ read
  \[
  \emptyset{\dcup}1 \,, \quad \emptyset{\dcup}2 \,, \quad \emptyset{\dcup}3 \,,
  \quad 12{\dcup}123 \,, \quad  13{\dcup}123 \,, \quad 23{\dcup}12 \enspace .
  \]
  The complementary parts are given by $D'=\{1,2,3\}\setminus D''$ and $N'=\{1,2,3\}\setminus N''$.  Notice that, e.g.,
  $23{\dcup}123$ does not occur in the list above since $v_{23}=\infty=v_{33}$; this implies that the induced subgraph
  is not connected.  For instance, $23{\dcup}12$ yields the generator ${(0,1,1;1,1,0)}$.
% no \transpose{...} here
\end{example}

A \emph{subpolytope} of a polytope $P$ is the convex hull of some subset of the vertices of $P$.  Each face is a
subpolytope, but the converse does not hold.  We write $e_i$ for the $i$th standard basis vector of $\RR^k$, for any
$k$, and we write vectors in the product space $\RR^d\times\RR^n$ as $\pair{x}{y}$ where $x\in\RR^d$ and $y\in\RR^n$.
With this notation
\[
\Delta_{d-1}\times\Delta_{n-1} \ = \ \conv\SetOf{\pair{e_i}{e_j}}{(i,j)\in[d]\times[n]}
\]
is a product of simplices. Develin and Sturmfels established that a tropical configuration of $n$ points induces a
polyhedral subdivision of $\RR^d$ which is dual to a regular subdivision of $\Delta_{d-1}\times\Delta_{n-1}$
\cite[Theorem~1]{DevelinSturmfels:2004}.  A polytopal subdivision is \emph{regular} if it is induced by a height
function; for details see \cite{Triangulations}.  The following statement will be instrumental in Section~\ref{subsec:type}
below for obtaining a natural generalization to subpolytopes of products of simplices.  Notice that those subpolytopes
naturally correspond to subgraphs of the complete bipartite graph $[d]\times[n]$. 
\begin{theorem}\label{thm:regular_subdivison}
  The boundary complex of the envelope $\envelope(V)$ is dual to the regular subdivision of the polytope
  \[
  \conv \SetOf{\pair{e_i}{e_j} \in \RR^d \times \RR^n}{(i,j) \in \Bgraph(V)}
  \]
  with height function $V$.
  % Here a face $F$ with equality digraph $\Gamma_F(W)$ corresponds to the cell
  % \[
  % \conv\SetOf{\pair{e_i}{e_j}}{(i,j) \in \Gamma_F(W)} \enspace .
  % \]
\end{theorem}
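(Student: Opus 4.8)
The plan is to set up the standard duality between the regular subdivision of a point configuration and the lower faces of the lifted polytope, and then to match these lower faces with the faces of the envelope $\envelope(V)$ via the linear-programming duality that is already implicit in the weighted-digraph picture. Concretely, write the point configuration as the columns $p_{ij} = \pair{e_i}{e_j} \in \RR^{d}\times\RR^{n}$ indexed by $(i,j)\in\Bgraph(V)$, with heights $v_{ij}$. By definition, a subset $S\subseteq\Bgraph(V)$ spans a cell of the regular subdivision induced by $V$ exactly when there is a linear functional $(y,z)\in\RR^{d}\times\RR^{n}$ (the "slope" of a supporting hyperplane of the lifted polytope from below) such that $y_i + z_j = v_{ij}$ for $(i,j)\in S$ and $y_i + z_j \le v_{ij}$ for all $(i,j)\in\Bgraph(V)$. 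Replacing $z$ by $-z$ turns this into $y_i - z_j \le v_{ij}$ for all arcs, with equality on $S$ — which is precisely the condition describing the face $F_{G}(\envelope(V))$ where $G = S$, by the definition of $F_G$ and Lemma~\ref{lemma: face of wdp}. So the map $S \mapsto F_S(\envelope(V))$ is the candidate anti-isomorphism.

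First I would make this correspondence precise as a bijection between the cells of the regular subdivision and the faces of $\envelope(V)$. Each cell of the subdivision is $\conv\{p_{ij} : (i,j)\in S\}$ for an inclusion-maximal $S$ admitting a common supporting slope $(y,-z)$; conversely, given a nonempty face $F$ of $\envelope(V)$, the set of arcs tight on $F$ is such an $S$, since $F = F_S(\envelope(V))$ and every face arises this way by the discussion preceding Lemma~\ref{lemma: face of wdp}. The only subtlety is to check that the maximal tight-arc set of a face of the envelope indeed gives a cell of the regular subdivision and not some larger face of the lifted polytope's lower hull — but this is exactly the statement that the set of slopes realizing $F$ is relatively open in the normal fan, i.e. the face–normal-cone duality, applied to the lifting. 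I would phrase this as: a point $(y,z)$ lies in the relative interior of the normal cone of the cell spanned by $S$ iff $S$ is exactly the tight-arc set of $(y,-z)$, and this matches the relative interior of the face $F_S$ of the envelope.

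The order-reversal is then automatic: if $S \subseteq S'$ then $F_{S'}(\envelope(V)) \subseteq F_S(\envelope(V))$ because imposing more equalities shrinks the face, and conversely the cell spanned by $S'$ contains the cell spanned by $S$; so face containment in the subdivision corresponds to reverse containment of envelope faces, which is what "dual to" means. Finally I would remark that the empty face of the envelope corresponds to nothing (the subdivision has no "whole polytope" top element in the relevant convention), while the unique minimal face of $\envelope(V)$ — the one cut out by the whole equality graph, of dimension $\dim\envelope(V)$, computed by Lemma~\ref{lemma:dim equality graph} — corresponds to the generic slope, i.e. to the full-dimensional cells; the dimension count $\dim F_S + \dim(\text{cell of }S) $ complementing correctly inside the respective ambient spaces modulo lineality is a routine bookkeeping step using Lemma~\ref{lemma:dim equality graph} and the fact that $\Bgraph(V)$ is connected.

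The main obstacle I expect is the careful matching of \emph{relative interiors} — making sure that the maximal-tight-arc-set of a face of $\envelope(V)$ is precisely (not merely contained in) the arc set of a cell of the regular subdivision, so that the correspondence is a genuine bijection of cell \emph{posets} rather than just an order-preserving map between two lattices. This is where one must invoke that the subdivision is regular \emph{with height function $V$} in an essential way — i.e. that every slope $(y,-z)$ feasible for the envelope arises as the slope of a supporting hyperplane of the lifted configuration and conversely — rather than merely that some subdivision is being described. Everything else (linearity, order-reversal, the role of lineality space $\RR\1$) is formal.
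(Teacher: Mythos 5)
Your argument is correct and is essentially the paper's own proof: the paper homogenizes $\envelope(V)$, passes to the polar cone $\pos\{(1,\0,\0)\}+\pos\smallSetOf{(v_{ij},-e_i,e_j)}{(i,j)\in\Bgraph(V)}$, and slices it by an affine hyperplane to obtain the lifted point configuration whose lower hull projects to the regular subdivision --- which is exactly the duality you set up directly by identifying points $(y,z)$ of the envelope with supporting slopes of the lifted configuration from below. The relative-interior and maximal-tight-set bookkeeping you rightly flag as the only delicate point is absorbed in the paper's version into the single statement that polarity reverses the face lattice of the homogenization cone, so the two write-ups differ only in packaging.
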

\begin{proof}
  We abbreviate $\Bgraph=\Bgraph(V)$. Homogenizing the envelope $\envelope(V)$ (with leading homogenizing coordinate) yields the cone
  \[
  \SetOf{\triple{\alpha}{y}{z} \in \RR_{\geq 0}\times\RR^d\times\RR^n}{ \langle \triple{v_{ij}}{-e_i}{e_j}, \triple{\alpha}{y}{z} \rangle
  \geq 0 \text{ for all } (i,j) \in \Bgraph} \enspace .
  \]
  Hence the polar cone with the dual face lattice can be written as
  \[
  \pos\left\{\triple{1}{\0}{\0}\right\} + \pos \SetOf{\triple{v_{ij}}{-e_i}{e_j}}{(i,j) \in \Bgraph} \enspace .
  \]
  Intersecting with the affine hyperplane $H=\smallSetOf{\triple{\alpha}{y}{z}}{\langle \triple{0}{-\1}{\1}, \triple{\alpha}{y}{z}
    \rangle = 2}$ gives the polytope
  \[
  P \ = \ \conv\SetOf{\triple{v_{ij}}{-e_i}{e_j}}{(i,j) \in \Bgraph} \enspace ,
  \]
  because all these vectors lie in $H$ and the origin does not. 
  
  The orthogonal projection of the lower convex hull of $P$ with respect to $\triple{1}{\0}{\0}$ defines a regular
  subdivision of the subpolytope of $\Delta_{d-1}\times\Delta_{n-1}$ corresponding to $\Bgraph$.  If $\Bgraph$ is the
  complete bipartite graph or equivalently no entry of $V$ is $\infty$, that subpolytope is the entire product of
  simplices.
\end{proof}

Any regular subdivision of a subpolytope extends to a regular subdivision of the superpolytope, e.g., by successive
placing of the remaining vertices \cite[\S4.3.1]{Triangulations}. In our situation a regular subdivision of the
superpolytope $\Delta_{d-1}\times\Delta_{n-1}$ is obtained by replacing the infinite coefficients in the matrix $V$ with
sufficiently large real numbers.  Note that this extension is not unique.

\subsection{Projections} In this section we investigate orthogonal projections of weighted digraph polyhedra and
envelopes into the coordinate directions.  To this end we let $\pi_I$ be the projection onto the coordinates in
$[k]\setminus I$ for $I\subseteq[k]$.  For a $k{\times} k$-matrix $W$ we define $W/I$ by removing the rows and columns
whose indices lie in $I$.  We write $\pi_i$ and $W/i$ if $I=\{i\}$ is a singleton.
\begin{lemma} \label{lemma:projection_wdp}
  The image of $Q(W)=Q(W^*)$ under the linear projection $\pi_I$ is the weighted digraph polyhedron $Q(W^*/I)$.
\end{lemma}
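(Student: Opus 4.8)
The plan is to verify the two inclusions $\pi_I(Q(W^*)) \subseteq Q(W^*/I)$ and $\pi_I(Q(W^*)) \supseteq Q(W^*/I)$ separately, reducing to the singleton case $I = \{k\}$ by induction, since projecting out a set of coordinates is the composition of the single-coordinate projections and the Kleene star operation is idempotent (by Lemma~\ref{lemma:zero_cycles}(a), $Q((W^*)^*) = Q(W^*)$, so $(W^*)^*$ and $W^*$ have the same shortest-path matrix). So it suffices to show $\pi_k(Q(W^*)) = Q(W^*/k)$.

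The first inclusion is the easy direction. If $x \in Q(W^*)$ then $x$ satisfies $x_i - x_j \leq w^*_{ij}$ for all arcs; in particular it satisfies all these inequalities for $i, j \in [k-1]$, which are exactly the defining inequalities of $Q(W^*/k)$ once we note that $W^*/k$ has entries $w^*_{ij}$ for $i, j \neq k$ (these are already shortest-path weights in $\Gamma(W)$, hence certainly valid, though possibly not shortest in the smaller graph — but that only makes the inequalities weaker, so feasibility is preserved). Thus $\pi_k(x) \in Q(W^*/k)$.

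The reverse inclusion is where the use of $W^*$ (rather than an arbitrary $W$) is essential, and I expect this to be the main obstacle. Given $\bar x = (x_1, \dots, x_{k-1}) \in Q(W^*/k)$, I must produce a value $x_k$ so that $(x_1, \dots, x_{k-1}, x_k) \in Q(W^*)$. The constraints involving $x_k$ are $x_i - x_k \leq w^*_{ik}$ and $x_k - x_j \leq w^*_{kj}$, i.e., $x_i - w^*_{ik} \leq x_k \leq x_j + w^*_{kj}$ for all $i, j \in [k-1]$. Such an $x_k$ exists iff $x_i - w^*_{ik} \leq x_j + w^*_{kj}$ for every pair $i, j$, equivalently $x_i - x_j \leq w^*_{ik} + w^*_{kj}$. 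Now the key point: $w^*_{ik} + w^*_{kj}$ is the weight of a walk from $i$ to $k$ to $j$, so by the shortest-path property in $\Gamma(W)$ we have $w^*_{ij} \leq w^*_{ik} + w^*_{kj}$; but the inequality $x_i - x_j \leq w^*_{ij}$ need \emph{not} hold in $Q(W^*/k)$ since $w^*_{ij}$ may exceed the shortest-path weight in the graph with node $k$ deleted. The resolution is that $\bar x$ satisfies $x_i - x_j \leq (W^*/k)^*_{ij}$, the shortest-path weight in the smaller graph; and one checks that deleting node $k$ from $\Gamma(W^*)$ and then taking the Kleene star gives exactly the shortest-path weights of $\Gamma(W)$ restricted to paths avoiding $k$ — but since $W^*$ already has \emph{all} pairwise shortest-path weights as direct arc weights (Lemma~\ref{lem:Kleene_tight}), a path from $i$ to $j$ in $\Gamma(W^*)$ either uses $k$ once (weight $\geq w^*_{ij}$ anyway, as the single arc $(i,j)$ does at least as well) or avoids $k$. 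Hence $(W^*/k)^*_{ij} = w^*_{ij}$, so in fact $\bar x$ \emph{does} satisfy $x_i - x_j \leq w^*_{ij}$, and a fortiori $x_i - x_j \leq w^*_{ik} + w^*_{kj}$, which is what we needed. Picking $x_k := \max_{i \in [k-1]} (x_i - w^*_{ik})$ (or any value in the permitted interval) completes the argument, and the induction on $|I|$ finishes the proof.
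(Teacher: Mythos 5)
Your proof is correct and follows essentially the same route as the paper's: reduce by induction to projecting out a single coordinate, note the easy inclusion, and produce the missing coordinate from the interval $[\max_i(x_i-w^*_{ik}),\,\min_j(x_j+w^*_{kj})]$, which is nonempty by the shortest-path triangle inequality $w^*_{ij}\leq w^*_{ik}+w^*_{kj}$. The only blemish is the detour in the middle: since $W^*/k$ is by definition the matrix with entries $w^*_{ij}$ for $i,j\in[k-1]$, the inequality $x_i-x_j\leq w^*_{ij}$ is literally a defining inequality of $Q(W^*/k)$, so your worry that it "need not hold" (and the subsequent argument that $(W^*/k)^*=W^*/k$) is unnecessary, though what you prove there is true and the final conclusion is unaffected.
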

\begin{proof}
  By induction it suffices to consider the case where $I=\{k\}$.
  That $\pi_{k}(Q(W^*))$ is contained in $Q(W^*/k)$ is clear.
  We want to show the reverse inclusion.
  For $(x_1, \ldots, x_{k-1}) \in Q(W^*/k)$ we need to find a real number $y$ so that $(x_1,\dots,x_{k-1} ,y ) \in Q(W)=Q(W^*)$.
  The latter condition is equivalent to
  \[
  x_i - w_{ik}^* \ \leq \ y \quad \text{ and } \quad y \ \leq \ x_i + w_{ki}^* \quad \text{ for all } \quad i \in [k-1] \enspace .
  \]
  So, the claim follows if we can show that 
  \begin{equation}\label{eq:maxmin}
    \max_{i \in [k-1]} (x_i - w_{ik}^*) \ \leq \ \min_{i \in [k-1]}(x_i + w_{ki}^*) \enspace .
  \end{equation}
  Let $p$ and $q$ be indices for which the maximum and the minimum in \eqref{eq:maxmin}, respectively, are attained.
  Now $w^*_{pq}$ is the length of the shortest path from $p$ to $q$ in the weighted digraph $\Gamma(W)$.  This yields
  \[
  x_p - x_q \ \leq \ w^*_{pq} \ \leq \ w_{pk}^* + w_{kq}^*  \quad\text{and hence}\quad x_p - w_{pk}^* \ \leq \ x_q + w_{kq}^*  \enspace .
  \]
\end{proof}

Now we turn to studying projections of faces of the envelope $\envelope(V)$ of a not necessarily square $d{\times}n$-matrix.
With $W$ defined as in \eqref{eq:W} we have $\envelope(V)=Q(W)$.  By Lemma~\ref{lemma: face of wdp} for any face $F$ of
the envelope there is a subgraph $G$ of $\Gamma=\Gamma(W)$ such that $F=Q(W\#G)$.  Since, up to a relabeling of the
nodes, we can identify the directed graph $\Gamma$ with the bipartite graph $\Bgraph=\Bgraph(V)$ and we may read $G$ as a
subgraph of $\Bgraph$.  We define the $n{\times}d$-matrix $V[G]$ with coefficients
\[
v_{ji}' \ = \ \begin{cases} -v_{ij} & \text{if } (i,j)\in G\\ \infty & \text{otherwise \enspace .} \end{cases}
\]
The following lemma is similar to \cite[Lemma~10]{DevelinSturmfels:2004}.  Notice that the tropical matrix product $V
\odot V[G]$ yields a $d{\times d}$-matrix.
\begin{lemma}\label{lemma: projection envelope}
  The image of the face $F$ of $\envelope(V)\subset\RR^d\times\RR^n$ under the orthogonal projection $\pi_{[n]}$ onto
  the first component is the weighted digraph polyhedron $Q(V \odot V[G])$.
\end{lemma}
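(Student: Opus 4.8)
The plan is to reduce the statement to Lemma~\ref{lemma:projection_wdp} and then recognise the resulting (restricted) Kleene star as the Kleene star of $V\odot V[G]$. First I would make $W\#G$ explicit. Reading $G$ as a subgraph of $\Bgraph=\Bgraph(V)$ and $W$ as in~\eqref{eq:W}, the matrix $W\#G$ is obtained from $W$ by replacing, for each arc $(i,j)\in G$, the infinite entry in row $d+j$ and column $i$ of the bottom--left block by $-v_{ij}$; all other entries are unchanged. Hence $W\#G$ is the block matrix with blocks $\infty_{d\times d}$, $V$, $V[G]$, $\infty_{n\times n}$. Since $F$ is a non-empty face of the envelope, $Q(W\#G)=F\neq\emptyset$, so by Lemma~\ref{lemma:feasible} the digraph $\Gamma(W\#G)$ has no negative cycle, $(W\#G)^*$ exists, and $F=Q(W\#G)=Q((W\#G)^*)$.

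Next, the orthogonal projection $\pi_{[n]}$ onto the first factor $\RR^d$ is precisely the projection that forgets the $n$ bottom-shore coordinates, i.e.\ the map $\pi_I$ of Lemma~\ref{lemma:projection_wdp} for $I$ the set of bottom nodes. That lemma then gives $\pi_{[n]}(F)=Q((W\#G)^*/I)$, and $(W\#G)^*/I$ is exactly the $[d]\times[d]$-block of $(W\#G)^*$, that is, the matrix whose $(i,i')$-entry is the shortest-path weight from $i$ to $i'$ in $\Gamma(W\#G)$ among top-shore nodes.

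The core step is the identity $(W\#G)^*/I=(V\odot V[G])^*$. Because both diagonal blocks of $W\#G$ are infinite, $\Gamma(W\#G)$ is bipartite between the top shore $[d]$ and the bottom shore, so every walk between two top nodes alternates shores and has even length $2m$. Splitting such a walk $i_0\to(d+j_1)\to i_1\to(d+j_2)\to\cdots\to(d+j_m)\to i_m$ into its $m$ length-two pieces $i_{k-1}\to(d+j_k)\to i_k$, each of weight $v_{i_{k-1}j_k}+v'_{j_k i_k}$, exhibits it as a weight-preserving image of a walk $i_0\to\cdots\to i_m$ in the weighted digraph on $[d]$ with adjacency matrix $V\odot V[G]$, since $(V\odot V[G])_{i_{k-1}i_k}=\min_j\bigl(v_{i_{k-1}j}+v'_{j i_k}\bigr)$. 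Conversely a walk in $\Gamma(V\odot V[G])$ lifts to an alternating walk of equal weight by choosing, on each arc, a bottom node attaining the minimum; this also matches up infinite entries, since $(V\odot V[G])_{ii'}=\infty$ precisely when no bottom node joins $i$ to $i'$ through $G$. As there are no negative cycles, shortest-walk weights coincide with shortest-path weights, so the two matrices agree; in particular $V\odot V[G]$ has no negative cycle, and Lemma~\ref{lemma:zero_cycles}(a) yields $\pi_{[n]}(F)=Q\bigl((V\odot V[G])^*\bigr)=Q(V\odot V[G])$.

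The main obstacle is the bookkeeping in this core step: one must verify that the walk/factored-walk correspondence is weight-preserving in both directions (including the degenerate case $i=i'$, where both sides contribute weight $0$) and that the $\min$ defining $\odot$ correctly reproduces ``no walk through a bottom node'' by an $\infty$. Once the alternation and even-parity structure of $\Gamma(W\#G)$ is used to pair up walks with walks of $\Gamma(V\odot V[G])$, the remaining verifications are routine.
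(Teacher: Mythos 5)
Your proposal is correct and follows essentially the same route as the paper: reduce to Lemma~\ref{lemma:projection_wdp}, observe that the entries of $V\odot V[G]$ are the two-arc shortest-path weights in the bipartite digraph $\Gamma(W\#G)$, and use the alternating-shore decomposition of walks between top nodes to identify the restricted Kleene star with $(V\odot V[G])^*$, hence $Q(V\odot V[G])$. The paper's version is just a terser rendering of the same argument (your explicit block form of $W\#G$ and the feasibility remark via Lemma~\ref{lemma:feasible} are welcome additions).
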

\begin{proof}
  For $i,\ell\in[d]$ let $u_{i\ell}$ be a coefficient of $V\odot V[G]$. We have 
  \[
  u_{i\ell} \ = \ \min_{j\in[n]}(v_{ij}+v_{j\ell}') \ = \ \min_{j\in[n],\, v_{ij}\neq\infty,\, v_{\ell
      j}\neq\infty}(v_{ij}-v_{\ell j}) \enspace ,
  \]
  which is exactly the length of a shortest path from $i$ to $\ell$ with two arcs in the digraph $\Gamma(W\#G)$. Since
  the directed graph $\Gamma(W\#G)$ is bipartite the shortest path from $i$ to $\ell$ (over arbitrarily many arcs) is a
  concatenation of the two-arc-paths above.  Now the claim follows from the previous lemma.
\end{proof}

\begin{figure}[ht]
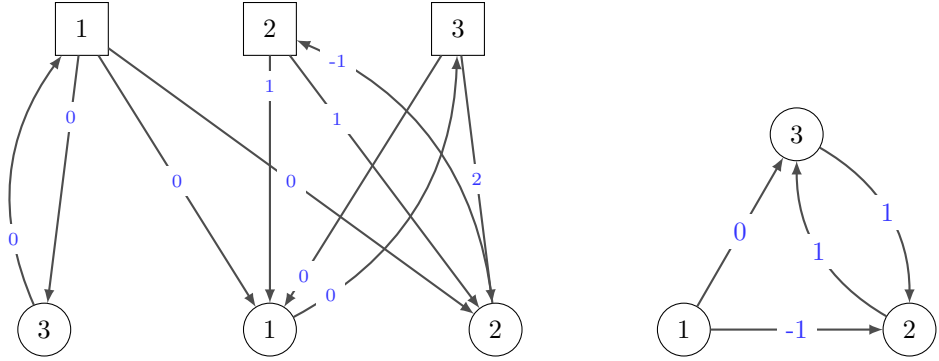

  \centering
  \inclpic{ex_face_envelope1}
\hspace*{4em}
  \inclpic{ex_face_envelope2}
  \caption{Weighted digraphs corresponding to a face of $\envelope(V)$ from Example \ref{example:projection_face_envelope}. The first graph corresponds to a face in $\RR^d{\times}\RR^n$ whereas the second corresponds to its projection onto $\RR^d$.  The nodes on the bottom shore are not in their natural ordering to reduce the number of arcs crossing} 
  \label{figure:face_envelope}
\end{figure}

\begin{example} \label{example:projection_face_envelope}
We consider the same matrix $V$ as in Example~\ref{example:envelope_with_infty}. For the bipartite graph $G$ on the six nodes $\{1,2,3\} \dcup \{1,2,3\}$ with arcs $(1,3),(2,2),(3,1)$ we obtain
\[
  V[G] \ = \ \begin{pmatrix} \infty & \infty & 0 \\ \infty & -1 & \infty \\ 0 & \infty & \infty \end{pmatrix} \enspace .
\]
This yields the product 
\[
  V\odot V[G] \ = \ \begin{pmatrix} 0 & 0 & 0 \\ 1 & 1 & \infty \\ 0 & 2 & \infty \end{pmatrix} \odot
 \begin{pmatrix} \infty & \infty & 0 \\ \infty & -1 & \infty \\ 0 & \infty & \infty \end{pmatrix} \ = \ \begin{pmatrix} 0 & -1 & 0 \\ \infty & 0 & 1 \\ \infty & 1 & 0 \end{pmatrix} \enspace .
\]
The corresponding graph is depicted in Figure~\ref{figure:face_envelope} on the right whereas the left one shows the graph $\Gamma(W\#G)$.
\end{example}

\begin{figure}[ht]
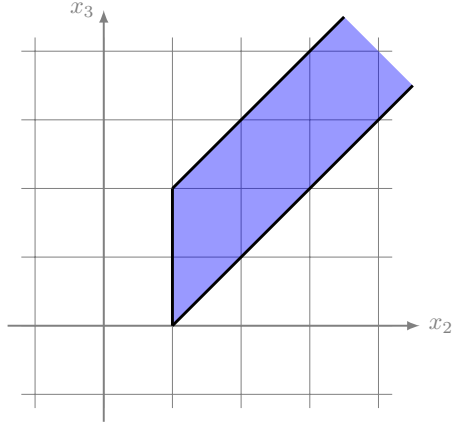

  \centering
  \resizebox{0.4\textwidth}{!}{\inclpic{ex_infinite_point}}
  \caption{Weighted digraph polyhedron given by the matrix $V \odot V[G]$ in Example
    \ref{example:projection_face_envelope} which is unbounded in the tropical projective $2$-torus}
  \label{figure:infinite_convex_hull}
\end{figure}

\section{Tropical cones and polyhedral cells} \label{sec:tropical}
%\noindent
\subsection{Polyhedral sectors}
As before let $V$ be a $d{\times}n$-matrix with coefficients in $\TT_{\min}$. We write $v^{(j)}$ for the $j$th column of $V$, and therefore we can identify $V$ with $(v^{(1)},v^{(2)},\dots,v^{(n)})$, the sequence of column vectors.  The
$(\min,+)$-linear span of the columns of $V$ is the \emph{$\min$-tropical cone}
\[
\tcone(V) \ = \ \SetOf{(\lambda_1 \odot v^{(1)}) \oplus \dots \oplus (\lambda_n \odot v^{(n)})}{\lambda_j\in\TT_{\min}} \enspace .
\]
Put in a more algebraic language, a tropical cone is the same as a finitely generated subsemimodule of the semimodule
$(\TT_{\min}^d,\oplus,\odot)$.  A subset $M$ of $\RR^d$ is \emph{$\min$-tropically convex} if for any two points $u,v\in
M$ we have $\tcone(u,v)\subseteq M$.  Any tropically convex set contains $\RR\1$, and so we can study its image under
the canonical projection to the tropical projective torus.  Up to this projection tropical cones generated by
vectors with finite entries are precisely the `tropical polytopes' of Develin and Sturmfels \cite{DevelinSturmfels:2004}.
In this section we will generalize key results from that paper to the case where $\infty$ may occur as a coordinate.
By homogenization our results also apply to the formally more general `tropical polyhedra' studied, e.g., in \cite{AkianGaubertGutermann12} and \cite{ABGJ-Simplex:A}.

\begin{remark}\label{rem:wdp_min_cone}
  For an arbitrary $k{\times}k$-matrix with coefficients in $\TT_{\min}$ the weighted digraph polyhedron
  $Q(W)=Q(W^*)$ coincides with the $\min$-tropical span $\tcone(W^*)$. See also \cite[Theorem~2.1.1]{Butkovic:10} and the Section~\ref{sec:polytropes} on
  polytropes below.
\end{remark}

For $u \in \TT_{\min}^d$ and $i \in [d]$ with $u_i \not= \infty$ we define the $i$th \emph{sector} $S_i(u)$ with
respect to \emph{max} as
\[
\SetOf{z \in \RR^d}{\max_{\ell \in [d]}(z_{\ell} - u_{\ell}) = z_i - u_i} \ = \ \SetOf{z \in \RR^d}{\min_{\ell \in [d]}(u_{\ell} - z_{\ell}) = u_i - z_i} \enspace .
\]
Notice that the above equality of sets is a consequence of the elementary fact
\[
-\max(u,v) \ = \ \min(-u,-v) \enspace .
\]
Moreover, the equation $\min_{\ell \in [d]}(u_{\ell} - z_{\ell}) = u_i - z_i$ is equivalent to $z_{\ell} - z_i \leq
u_{\ell} - u_i$ for each $\ell \in [d]$.  As $u_i<\infty$ that minimum cannot be attained for any $\ell \in [d]$ with
$u_{\ell} = \infty$.  We have
\begin{equation}\label{eq:sector}
  S_{i}(u) \ = \ \bigcap_{\ell\in[d],\ u_\ell\neq\infty}\SetOf{z\in\RR^d}{z_{\ell} - z_i \leq u_{\ell} - u_i}
\enspace ,
\end{equation}
which means that this sector is the weighted digraph polyhedron for the graph with node set $[d]$ and arc set
$\smallSetOf{(\ell,i)}{\ell \in [d], u_{\ell} \not= \infty}$, where the arc $(\ell,i)$ has weight $u_{\ell} - u_i$.

\begin{figure}[ht]
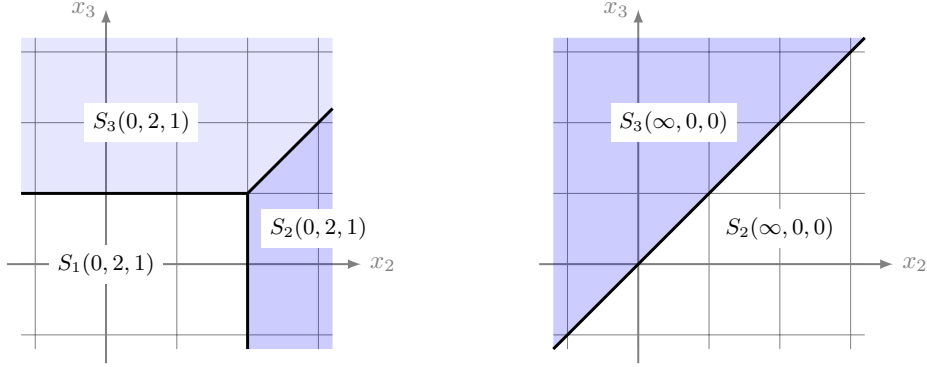

  \centering
  \resizebox{0.35\textwidth}{!}{\inclpic{ex_sector_decomposition1}}
  \hspace*{4em}
  \resizebox{0.35\textwidth}{!}{\inclpic{ex_sector_decomposition2}}
  \caption{Polyhedral decomposition of $\RR^3$ as in Lemma~\ref{lem:sectors} induced by $(0,2,1)$ and $(\infty,0,0)$, respectively.
    Compare the image on the right with Figure \ref{figure:cone_order_polytope}}
  \label{figure:sector_decomposition}
\end{figure}

\begin{lemma}\label{lem:sectors}
  The sectors $\smallSetOf{S_i(u)}{u_i\neq\infty}$ are the maximal cells of a polyhedral decomposition of $\RR^d$.
\end{lemma}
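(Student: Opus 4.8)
The plan is to exhibit the whole polyhedral decomposition as the family $\cF$ of all faces of the sectors $S_i(u)$. Conceptually, $f(z)=\max_{\ell\in[d],\,u_\ell\neq\infty}(z_\ell-u_\ell)$ is a convex piecewise-linear function on $\RR^d$ and the sectors are precisely the maximal domains on which $f$ is linear; such domains are always the maximal cells of a polyhedral complex. Concretely I would verify: (a) each $S_i(u)$ is a full-dimensional polyhedron; (b) the $S_i(u)$ cover $\RR^d$; (c) the intersection of any two sectors is a face of each. The statement then follows from the standard fact that, for a finite family of polyhedra satisfying (b) and (c), the collection of all their faces is a polyhedral complex with union $\RR^d$ whose inclusion-maximal cells are the given polyhedra.

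For (a), fix $i$ with $u_i\neq\infty$ and let $W_i$ be the weighted adjacency matrix of the digraph appearing after \eqref{eq:sector}, so $S_i(u)=Q(W_i)$; every arc points into the node $i$ and the only cycle is the loop at $i$ of weight $0$, so $Q(W_i)$ is feasible by Lemma~\ref{lemma:feasible}. Moreover the open set $\SetOf{z\in\RR^d}{z_\ell-u_\ell<z_i-u_i\text{ for all }\ell\in[d]\text{ with }u_\ell\neq\infty}$ is nonempty and contained in $S_i(u)$, so $S_i(u)$ is $d$-dimensional. For (b), given $z\in\RR^d$ the number $\max_{\ell\in[d],\,u_\ell\neq\infty}(z_\ell-u_\ell)$ is a maximum of finitely many reals over a nonempty index set (some $u_\ell$ is finite), hence it is attained at some $i$, and then $z\in S_i(u)$.

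For (c), take $i\neq j$ with $u_i,u_j\neq\infty$. If $z\in S_i(u)$ then $z_i-u_i=\max_{\ell\in[d],\,u_\ell\neq\infty}(z_\ell-u_\ell)\geq z_j-u_j$, with equality if and only if the maximum is also attained at $j$; hence
\[
S_i(u)\cap S_j(u) \ = \ S_i(u)\cap\SetOf{z\in\RR^d}{z_j-z_i=u_j-u_i} \ = \ F_{\{(j,i)\}}(W_i) \enspace,
\]
using that $z_j-z_i\leq u_j-u_i$ is one of the defining inequalities of $S_i(u)=Q(W_i)$ from \eqref{eq:sector}. So $S_i(u)\cap S_j(u)$ is a face of $S_i(u)$, and symmetrically a face of $S_j(u)$. (More generally, for nonempty $I\subseteq\SetOf{\ell}{u_\ell\neq\infty}$ and any $i\in I$ one has $\bigcap_{\ell\in I}S_\ell(u)=F_G(W_i)$ with $G=\SetOf{(\ell,i)}{\ell\in I\setminus\{i\}}$, so the multiple intersections are common faces as well.)

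It remains to assemble (a)--(c). Let $\cF$ be the set of all faces of all the sectors; it is closed under taking faces. If $F$ is a face of $S_i(u)$ and $F'$ a face of $S_j(u)$, put $E=S_i(u)\cap S_j(u)$, a common face of $S_i(u)$ and $S_j(u)$ by (c). Then $F\cap E$ is a face of $S_i(u)$, and $F'\cap E$---being a face of $S_j(u)$ contained in the face $E$ of $S_j(u)$---is a face of $E$, hence of $S_i(u)$; since $F\cap F'\subseteq E$ we get $F\cap F'=(F\cap E)\cap(F'\cap E)$, which is a face of $S_i(u)$ and, symmetrically, of $S_j(u)$, so in particular a common face of $F$ and $F'$ lying in $\cF$. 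Thus $\cF$ is a polyhedral complex; its union is $\RR^d$ by (b); and each $S_i(u)$ is inclusion-maximal in $\cF$, being $d$-dimensional and distinct from every $S_j(u)$ with $j\neq i$ (any $z$ with $z_i-u_i$ strictly larger than all other $z_\ell-u_\ell$ lies in $S_i(u)\setminus S_j(u)$). The only point requiring care is the bookkeeping with the infinite entries of $u$, making sure the relevant maxima, digraphs and faces only ever involve the finite coordinates; everything else is routine.
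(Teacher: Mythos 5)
Your proof is correct, but it takes a genuinely different route from the paper's. The paper disposes of this lemma in two lines by viewing $u$ as a $d\times 1$-matrix and invoking the envelope machinery: $S_i(u)$ is the orthogonal projection of the face of $\envelope(u)\subset\RR^{d+1}$ determined by the single arc $(i,1)$ of $\Bgraph(u)$ (via Lemma~\ref{lemma: projection envelope}), so the whole decomposition is inherited from the boundary complex of one ordinary polyhedron. You instead give a self-contained, elementary verification: the sectors are the linearity regions of the convex piecewise-linear function $\max_{\ell}(z_\ell-u_\ell)$, they are full-dimensional, they cover $\RR^d$, and any two meet in a common face, whence the collection of all their faces is a polyhedral complex with the sectors as its maximal cells. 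Your treatment of the assembly step (reducing $F\cap F'$ to intersections with the common face $E=S_i(u)\cap S_j(u)$) is the standard argument and is carried out correctly, as is the identification $S_i(u)\cap S_j(u)=F_{\{(j,i)\}}(W_i)$ using that $z_j-z_i\leq u_j-u_i$ is a defining inequality from \eqref{eq:sector}. What each approach buys: yours needs no prior results beyond Lemma~\ref{lemma:feasible} and basic polyhedral geometry, while the paper's is essentially free once envelopes and their projections are in place and, more importantly, generalizes immediately to $n>1$ columns, which is exactly how the covector decomposition $\typedecomp{V}$ is obtained afterwards. One cosmetic slip: in your interior point for (a) the quantifier should read ``for all $\ell\in[d]$ with $u_\ell\neq\infty$ and $\ell\neq i$''; as written the set is empty because $\ell=i$ gives $z_i-u_i<z_i-u_i$. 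This does not affect the argument.
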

\begin{proof}
  Considering the column vector $u$ as a $d{\times}1$-matrix, we obtain the envelope $\envelope(u)$ as a subset of $\RR^{d+1}$.
  The sector $S_i(u)$ is the orthogonal projection of the face defined by the single arc $(i,1)$ in the bipartite graph $\Bgraph(u)$.
\end{proof}
We denote the polyhedral complex arising from the previous lemma by $\fan(u)$; see also
\cite[Proposition~16]{DevelinSturmfels:2004}. The negative $-u$ of the vector $u\in\TT_{\min}^d$ defines a
$\max$-tropical linear form and thus a $\max$-tropical hyperplane.  The sectors $S_i(u)$ for $u_i\neq\infty$ are
precisely the topological closures of the connected components of the complement of that tropical hyperplane.

\begin{example}\label{ex:bipartite_graph_sector}
  The white sector $S_1(0,2,1)$ in Figure~\ref{figure:sector_decomposition} is the orthogonal projection on $\{1,2,3\}$ of the weighted digraph polyhedron given by the bipartite graph with node set $\{1,2,3\} \dcup \{1'\}$ where the arc $(1,1')$ has weight zero, $(2,1')$ has weight $2$, $(3,1')$ has weight $1$ and $(1',1)$ has weight zero.
\end{example}

The following result characterizes the solvability of a system of tropical linear equations in $\RR^d$.  For matrices
with finite coordinates this is the Tropical Farkas Lemma \cite[Proposition~9]{DevelinSturmfels:2004}, a version of
which already occurs in \cite{Vorobyev67}.  We indicate a short proof for the sake of completeness. 
\begin{lemma}\label{lemma:sectors_tropical_hull}
  A point $z \in \RR^d$ is contained in $\tcone(V)$ if and only if for every $i \in [d]$ there is an index $s \in [n]$ with $z \in S_i(v^{(s)})$.
\end{lemma}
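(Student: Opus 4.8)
The plan is to prove the two implications separately, working throughout with the $\max$-formulation of the sectors, i.e.\ with the characterization $z\in S_i(u)\Leftrightarrow z_\ell-z_i\le u_\ell-u_i$ for all $\ell$ with $u_\ell\neq\infty$. The easy direction is necessity: suppose $z\in\tcone(V)$, so $z=\bigoplus_{j}(\lambda_j\odot v^{(j)})$, i.e.\ $z_\ell=\min_j(\lambda_j+v^{(j)}_\ell)$ for each $\ell$. Fix $i\in[d]$. Since $z_i<\infty$ (as $z\in\RR^d$), the minimum defining $z_i$ is attained at some index $s$ with $\lambda_s+v^{(s)}_i=z_i<\infty$; in particular $v^{(s)}_i\neq\infty$, so the sector $S_i(v^{(s)})$ is defined. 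For every $\ell\in[d]$ with $v^{(s)}_\ell\neq\infty$ we have $z_\ell\le\lambda_s+v^{(s)}_\ell$ by the $\min$-expression, hence $z_\ell-z_i\le(\lambda_s+v^{(s)}_\ell)-(\lambda_s+v^{(s)}_i)=v^{(s)}_\ell-v^{(s)}_i$, which by \eqref{eq:sector} is exactly the condition $z\in S_i(v^{(s)})$. So the index $s$ works.

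For sufficiency, assume that for each $i\in[d]$ there is an index $s(i)\in[n]$ with $z\in S_i(v^{(s(i))})$; in particular $v^{(s(i))}_i\neq\infty$. I would then define the scalars
\[
\lambda_j \ = \ \max_{i:\,s(i)=j}\bigl(z_i - v^{(j)}_i\bigr)
\]
for each $j$ in the image of $s$, and $\lambda_j=\infty$ otherwise (equivalently one can take $\lambda_j=\inf_{i}(z_i-v^{(j)}_i)$ over the $\max$-sectors the point lies in — but the cleaner choice is the one above, forcing equality at the prescribed coordinate). Set $w=\bigoplus_j(\lambda_j\odot v^{(j)})\in\tcone(V)$, so $w_\ell=\min_j(\lambda_j+v^{(j)}_\ell)$. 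The claim is that $w=z$. The inequality $w\ge z$ coordinatewise: for each $j$ with $\lambda_j<\infty$ and each $\ell$ with $v^{(j)}_\ell\neq\infty$, pick $i$ with $s(i)=j$ realizing $\lambda_j=z_i-v^{(j)}_i$; then $z\in S_i(v^{(j)})$ gives $z_\ell-z_i\le v^{(j)}_\ell-v^{(j)}_i$, i.e.\ $\lambda_j+v^{(j)}_\ell=z_i-v^{(j)}_i+v^{(j)}_\ell\ge z_\ell$, so every term in the $\min$ defining $w_\ell$ is $\ge z_\ell$, whence $w_\ell\ge z_\ell$. The inequality $w\le z$ coordinatewise: for $\ell\in[d]$ take $j=s(\ell)$; then $v^{(j)}_\ell\neq\infty$ and $\lambda_j\ge z_\ell-v^{(j)}_\ell$ by definition of $\lambda_j$ (the index $i=\ell$ is among those maximized over), so $w_\ell\le\lambda_j+v^{(j)}_\ell\le z_\ell$ is wrong as stated — rather $\lambda_j+v^{(j)}_\ell\ge z_\ell$; the correct bound is that $\lambda_j$ could exceed $z_\ell-v^{(j)}_\ell$ if some other $i$ with $s(i)=j$ gives a larger value, so I instead argue $w_\ell\le z_\ell$ by noting that with $\lambda_j=z_i-v^{(j)}_i$ for the maximizing $i$, and $z\in S_i(v^{(j)})$, we get $\lambda_j+v^{(j)}_\ell=z_i-v^{(j)}_i+v^{(j)}_\ell\ge z_\ell$ again — so I need the reverse, obtained instead from the sector $S_\ell(v^{(s(\ell))})$ directly: for $j=s(\ell)$, the defining condition of $S_\ell$ at the maximizing coordinate $i$ gives $z_i-z_\ell\le v^{(j)}_i-v^{(j)}_\ell$, i.e.\ $z_i-v^{(j)}_i\le z_\ell-v^{(j)}_\ell$, hence $\lambda_j\le z_\ell-v^{(j)}_\ell$, so $w_\ell\le\lambda_j+v^{(j)}_\ell\le z_\ell$, as needed. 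Combining, $w=z$.

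The main obstacle is exactly the bookkeeping just flagged: one has to choose the scalars $\lambda_j$ so that simultaneously (a) no term of the tropical combination drops below $z$ — this needs the sector inequalities at the coordinate where $\lambda_j$ is \emph{realized}, and (b) the combination actually reaches $z_\ell$ — this needs the sector inequality coming from $S_\ell(v^{(s(\ell))})$ to control \emph{all other} maximizers contributing to $\lambda_{s(\ell)}$. The clean way to organize this is to observe that for fixed $j$ the inequalities ``$z_i-v^{(j)}_i\le z_\ell-v^{(j)}_\ell$ whenever $z\in S_i(v^{(j)})$ and $z\in S_\ell(v^{(j)})$'' are all consequences of \eqref{eq:sector}, so the $\max$ over such $i$ equals $z_\ell-v^{(j)}_\ell$ whenever $s(\ell)=j$; this is the single fact that makes both (a) and (b) go through. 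Alternatively, and perhaps more transparently, I could deduce the whole lemma from Lemma~\ref{lem:sectors} together with Remark~\ref{rem:wdp_min_cone} and Lemma~\ref{lemma: intersection wgp}: the condition ``$z\in S_i(v^{(s)})$ for some $s$, for every $i$'' says $z$ lies in the union over all choice functions $s:[d]\to[n]$ of the intersections $\bigcap_i S_i(v^{(s(i))})$; each such intersection is a weighted digraph polyhedron $Q(W_s)$ by \eqref{eq:sector} and Lemma~\ref{lemma: intersection wgp}, and one checks its Kleene star yields columns lying in $\tcone(V)$, while conversely every generator $v^{(j)}$ lies in some such piece. I will present the direct computation above since it is self-contained, but flag the structural interpretation in a remark.
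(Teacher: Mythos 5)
Your proof is correct and follows essentially the same route as the paper: the necessity argument is verbatim the paper's (attain the minimum in $z_i=\min_j(\lambda_j+v_{ij})$ at $s$ and specialize), and your sufficiency argument with $\lambda_j=\max_{i:\,s(i)=j}(z_i-v^{(j)}_i)$ is exactly the reversal that the paper compresses into the sentence ``the entire argument can be reversed.'' The only issue is expository: the false start in the middle of the sufficiency argument should be deleted, since the final chain of inequalities (using $S_\ell(v^{(s(\ell))})$ to bound $\lambda_{s(\ell)}$ from above and the sector condition at each maximizer to bound every term of the minimum from below) is the correct and complete one.
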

\begin{proof}
  Let $z \in \RR^d$ be a point in $\tcone(V)$.  Then there is a vector $\lambda \in \TT_{\min}^n$ so that
  $
  \bigoplus_{j = 1}^{n} \lambda_j \odot v^{(j)} \ = \ z 
  $
  or, equivalently,
  \begin{equation}\label{eq:sectors}
    \min\SetOf{\lambda_j + v_{ij}}{j \in [n]} \ = \ z_i \qquad \text{for each } i \in [d] \enspace.
  \end{equation}
  Now fix $i\in[d]$ and let $s$ be an index $j$ for which the minimum in \eqref{eq:sectors} is attained; that is,
  $z_i=\lambda_s+v_{is}$.  If $\ell\in[d]$ with $v_{\ell s}\neq\infty$ this gives
  \[
  z_\ell-z_i \ = \ z_\ell - \lambda_s-v_{is} \ \leq \ \lambda_j + v_{\ell j} - \lambda_s - v_{is} \qquad \text{for each }
  j\in[n] \enspace .
  \]
  Specializing to $j=s$ entails $z_\ell-z_i \leq v_{\ell s}-v_{is}$ and thus $z\in S_i(v^{(s)})$.  The entire argument
  can be reversed to prove the converse.
\end{proof}

\subsection{The covector decomposition}\label{subsec:type}
Again let $V\in\TT_{\min}^{d\times n}$, and let $W\in\TT_{\min}^{(d+n)\times(d+n)}$ be the matrix which is associated
via \eqref{eq:W}. We assume in the following that $V$ has no column equal to the all $\infty$ vector $\transpose{(\infty,\ldots,\infty)}$; hence, none of the complexes $\fan(v^{(j)})$ is empty.
We do admit rows which solely contain $\infty$ entries.
They add to the lineality of the occurring polyhedra.
However, there may also be other contributions to the lineality space; see Lemma~\ref{lemma:recession_cone}.
The weighted bipartite graph $\Bgraph=\Bgraph(V)$ and the weighted digraph $\Gamma=\Gamma(W)$ are defined as before. For an arbitrary subgraph $G$ of $\Bgraph$ we define the polyhedron
\begin{equation}\label{eq:def_type_cell}
  X_G(V) \ = \ \bigcap_{(i,j)\in G} S_i(v^{(j)})
\end{equation}
in $\RR^d$.

\begin{remark}\label{rem:type_relations}
  Right from the definition, we obtain $X_{G \cup H}(V) = X_G(V) \cap X_H(V)$ for any two graphs $G,H \subseteq
  \Bgraph(V)$ . If, furthermore, $G \subseteq H$ then $X_H(V) \subseteq X_G(V)$.  This occurs also in \cite[Corollary 11
  and 13]{DevelinSturmfels:2004}.
  It should be stressed that the cells $X_G(V)$ and $X_H(V)$ may coincide even if the graphs $G$ and $H$ are distinct.
\end{remark}

\begin{proposition} \label{prop:projection_face_sector}
  Let $G$ be an arbitrary subgraph of $\Bgraph$ (which we may also read as a subgraph of~$\Gamma$).  Then the orthogonal
  projection of the face $F_G(W)$ onto $\RR^d$ equals $X_G(V)$.
  % the intersection of the sectors $S_i(v^{(j)})$ for $(i,j) \in A$.
  If no node in $[n]$ is isolated in $G$ that projection is an affine isomorphism.
\end{proposition}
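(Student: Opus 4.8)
The plan is to establish the set equality $\pi_{[n]}(F_G(W))=X_G(V)$ by proving the two inclusions separately, and then to observe that under the extra hypothesis the projection becomes injective with an affine inverse. I identify an arc $(i,j)\in G\subseteq\Bgraph$ with the corresponding arc of $\Gamma=\Gamma(W)$ from the row node $i\in[d]$ to the column node $j\in[n]$, so that by Lemma~\ref{lemma: face of wdp} we have $F_G(W)=\SetOf{(y,z)\in\envelope(V)}{y_i-z_j=v_{ij}\text{ for all }(i,j)\in G}$, and I recall from \eqref{eq:sector} that $y\in S_i(v^{(j)})$ is equivalent to $y_\ell-y_i\leq v_{\ell j}-v_{ij}$ for every $\ell\in[d]$ with $v_{\ell j}\neq\infty$.

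For the inclusion $\pi_{[n]}(F_G(W))\subseteq X_G(V)$, take $(y,z)\in F_G(W)$ and a fixed arc $(i,j)\in G$. The defining equality gives $z_j=y_i-v_{ij}$, and for every $\ell$ with $v_{\ell j}\neq\infty$ the envelope inequality $y_\ell-z_j\leq v_{\ell j}$ rearranges to $y_\ell-y_i\leq v_{\ell j}-v_{ij}$; hence $y\in S_i(v^{(j)})$, and since $(i,j)$ was arbitrary, $y\in X_G(V)$. For the reverse inclusion I reconstruct $z$ from a given $y\in X_G(V)$: if the column node $j$ lies on some arc $(i,j)\in G$ I set $z_j:=y_i-v_{ij}$, which is independent of the chosen arc because two arcs $(i,j),(i',j)\in G$ force $y_{i'}-y_i=v_{i'j}-v_{ij}$ via the $S_i(v^{(j)})$- and $S_{i'}(v^{(j)})$-inequalities; and if $j$ is isolated in $G$ I set $z_j:=\max_{\ell:\,v_{\ell j}\neq\infty}(y_\ell-v_{\ell j})$, which is a real number since $V$ has no all-$\infty$ column. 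One then checks $(y,z)\in F_G(W)$: the equalities $y_i-z_j=v_{ij}$ for $(i,j)\in G$ hold by construction, and the envelope inequalities $y_i-z_j\leq v_{ij}$ hold for isolated $j$ because $z_j$ is chosen as a maximum, and for non-isolated $j$ by substituting $z_j=y_{i_0}-v_{i_0j}$ and invoking $y\in S_{i_0}(v^{(j)})$. (Alternatively one could invoke Lemma~\ref{lemma: projection envelope}, which already gives $\pi_{[n]}(F_G(W))=Q(V\odot V[G])$, and merely verify that the inequality system of $V\odot V[G]$ coincides with that defining $X_G(V)$ as an intersection of sectors.)

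The one point requiring genuine care is this reconstruction of $z$ in the reverse inclusion: the well-definedness of $z_j$ when several arcs of $G$ end in the same column node $j$, and the use of the no-all-$\infty$-column hypothesis to place the remaining coordinates $z_j$; everything else is bookkeeping of the two inequality systems. For the final statement, assume no node of $[n]$ is isolated in $G$. Then every coordinate $z_j$ of a point $(y,z)\in F_G(W)$ is forced by an equality $y_{i_j}-z_j=v_{i_jj}$ with $(i_j,j)\in G$, so $\pi_{[n]}$ restricted to $F_G(W)$ is injective, and its inverse is the affine map $y\mapsto(y,z(y))$ with $z(y)_j=y_{i_j}-v_{i_jj}$. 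Combined with the surjectivity onto $X_G(V)$ established above, this shows that $\pi_{[n]}$ restricts to an affine isomorphism from $F_G(W)$ onto $X_G(V)$.
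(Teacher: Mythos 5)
Your proof is correct, and it takes a genuinely more elementary route than the paper. The paper proves this proposition entirely inside its weighted-digraph-polyhedron machinery: it builds auxiliary digraphs $\Phi_{ij}$ on the node set $[d]\dcup G$, identifies each sector $S_i(v^{(j)})$ as a projection of $Q(\Phi_{ij})$ via Lemma~\ref{lemma: projection envelope}, intersects these using Lemma~\ref{lemma: intersection wgp}, and then appeals to the shortest-path projection Lemma~\ref{lemma:projection_wdp} to conclude the set equality; for the isomorphism it does not exhibit an inverse but instead compares equality partitions (via Lemma~\ref{lemma:zero_cycles}) to show $F_G(W)$ and its image have the same dimension, which for a surjective affine map suffices. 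You instead verify both inclusions by hand: the forward one by rearranging the envelope inequalities, and the backward one by explicitly reconstructing $z$ from $y$ --- setting $z_j=y_i-v_{ij}$ when $j$ is covered by an arc of $G$ (with the well-definedness check via the two sector inequalities, which is indeed the one delicate point) and $z_j=\max_{\ell}(y_\ell-v_{\ell j})$ when $j$ is isolated, correctly using the standing hypothesis that no column of $V$ is identically $\infty$. Your construction has the added benefit of producing an explicit affine inverse $y\mapsto(y,z(y))$ for the second claim, which is arguably cleaner than the paper's dimension count. What the paper's detour buys is the identification of the projected face as the weighted digraph polyhedron $Q(V\odot V[G])$ and the handling of infeasibility through negative cycles, facts that are reused elsewhere (e.g., in Proposition~\ref{prop:char_type_graph} and Corollary~\ref{coro:dim_type}); your alternative remark about invoking Lemma~\ref{lemma: projection envelope} shows you see how the two routes connect. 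No gaps.
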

\begin{proof}
  Our goal is to exploit what we know about weighted digraph polyhedra.  To this end we define several digraphs with the
  same node set $[d]\dcup G$. Recall that we identify the subgraph $G$ of $\Gamma$ with its set of edges.  However, in
  the class of digraphs to be defined now, those edges (along with the nodes in $[d]$) play the role of nodes.

  Pick $(i,j)\in G$.  We let $\Phi_{ij}$ be the weighted digraph which results from $\Bgraph(v^{(j)})$, which has
  $[d]\dcup \{1\}$ as its node set, by renaming the node $1$ on the bottom shore by $(i,j)$ and adding an isolated node for each other arc in $G$.  The graph $\Phi_{ij}$
  has one extra arc in the reverse direction, namely from $(i,j)$ to $i$.  The weights on the arcs from top to bottom
  are the same as in $\Bgraph(v^{(j)})$, while the weight on the single reverse arc is $-v_{ij}$.  Compare this with Lemma \ref{lem:sectors} and Example \ref{ex:bipartite_graph_sector}. By construction the
  weighted digraph $\Phi_{ij}$ is bipartite and thus can be identified with a square matrix of size $d+|G|$.  By Lemma
  \ref{lemma: projection envelope} the weighted digraph polyhedron $Q(\Phi_{ij})\subset\RR^d\times\RR^G$ projects
  orthogonally onto the sector $S_i(v^{(j)})\subset\RR^d$.

  Let $\Phi$ be the digraph with node set $[d]\dcup G$ which is obtained as the union of the digraphs $\Phi_{ij}$ for
  $(i,j)\in G$.  Notice that by our construction the choice of the weights for the individual graphs $\Phi_{ij}$ is
  consistent.  This way we obtain a natural weight function on $\Phi$.  Due to Lemma~\ref{lemma: intersection wgp} we
  have
  \[
  \pi_G\bigl(Q(\Phi)\bigr) \ = \ \pi_G\bigl(\bigcap_{(i,j)\in G} Q(\Phi_{ij})\bigr) \ = \ \bigcap_{(i,j)\in G} S_i(v^{(j)}) \enspace .
  \]
  If $\Gamma(W\#G)$ has a negative cycle, so has $\Phi$ and by Lemma~\ref{lemma:feasible} then $F_G(W)$ as well as $X_G(V)$ are empty.
  If there are no negative cycles, there exists a shortest path between two nodes $i$ and $\ell$ in $[d]$, and it does not matter if we consider $\Gamma(W\#G)$ or $\Phi$. So, the claim follows with Lemma~\ref{lemma:projection_wdp}. 

  For the rest, assume that $\Gamma(W\#G)$ has no negative cycle.
  Since $\Gamma(W\#G)$ is bipartite, any two nodes $i, \ell \in [d]$ are contained in a directed cycle of weight zero of $G$ if this also holds for the graph
  $\Gamma(\pi_{[n]}(Q(W\#G))$ of the projection of $F_G(W)$ by Lemma~\ref{lemma:projection_wdp}. If no node in $[n]$ is isolated in $G$, every node in $[n]$ is contained in a directed cycle of weight zero, as every arc from $[n]$ to $[d]$ in $\Gamma(W\#G)$ induces a cycle 
  of length zero. Hence, the equality partition of $F_G(W)$ and of its projection $\Gamma(\pi_{[n]}(Q(W\#G))$ have the same number of parts by Lemma~\ref{lemma:zero_cycles}\ref{lemma:item:zero_cycles}. Therefore, if no node in $[n]$ is isolated in $G$, we get
  that $F_G(W)$ has the same dimension as $X_G(V)$.
\end{proof}

The \emph{covector decomposition} $\typedecomp{V}$ of $\RR^d$ is the common refinement of the polyhedral complexes
$\fan(v^{(j)})$ for $j \in [n]$.  For every cell $C$ in the covector decomposition there is a unique maximal subgraph
$\typegraph{C}$ of the complete bipartite graph $[d]\times[n]$, called the \emph{covector graph} of $C$, such that $C =
X_{\typegraph{C}}(V)$.  This graph is equivalent to the \emph{covector} $(t_1,t_2,\ldots,t_d) \in [n]^d$ where $t_i
\subseteq [n]$ consists of the nodes adjacent to $i$.  While the covector notation is concise in most proofs it is
convenient to keep the interpretation as a directed bipartite graph.  Notice that our cells are closed by definition.
By Proposition \ref{prop:projection_face_sector}, each covector (graph) also uniquely determines a face of $\envelope(V)$ and every face, for which
no node in $[n]$ is isolated, occurs in this way.  By Lemma \ref{lemma:sectors_tropical_hull} the \emph{covector
  decomposition} $\typedecomp{V}$ of $\RR^d$ induces a covector decomposition of the tropical cone $\tcone(V)$.
The covector graphs correspond to the `types' of \cite{DevelinSturmfels:2004}.

\begin{example}
  \label{ex:type_decomposition}
  Figure \ref{figure:tropical_types} shows an example for the matrix
  \[
  V \ = \
   \begin{pmatrix}
    0 & 0 & 0 \\
    1 & 0 & \infty \\
    2 & -1 & \infty
  \end{pmatrix} \enspace .
  \]
  The points corresponding to the columns of $V$ are marked $1$, $2$ and $3$.  Notice that the third column has $\infty$
  as a coordinate, which is why this point lies outside the tropical projective torus.  In fact, it is a boundary point of the
  \emph{tropical projective plane}; see Section~\ref{sec:projective} and Figure~\ref{fig:signed_cells} below.

  Only the covectors of the full-dimensional cells are indicated since the covectors of the other cells can
  directly be deduced from them by Remark~\ref{rem:type_relations}.

  The covector decomposition of $\tcone(V)$ has precisely two cells which are maximal with respect to inclusion: the
  $2$-dimensional cell with covector $(3,2,1)$ and the $1$-dimensional cell with covector $(13,2,2) = (13,-,2) \cup (13,2,-)$.
\end{example}

\begin{figure}[ht]
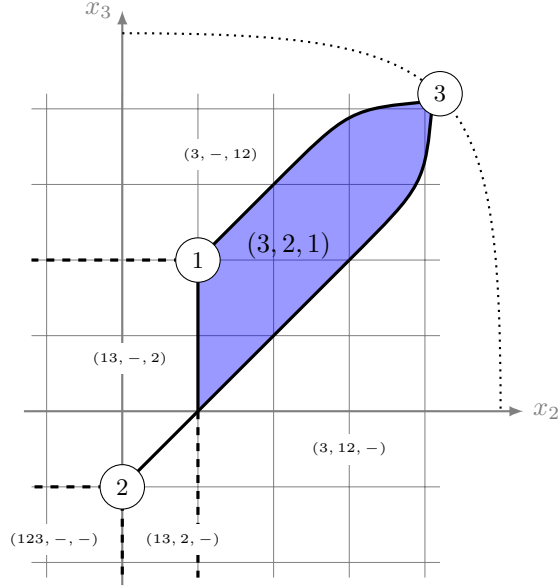

  \centering
  \resizebox{0.5\textwidth}{!}{\inclpic{ex_tropical_types_infty}}
  \caption{Tropical cone in the tropical projective $2$-torus, from Example \ref{ex:type_decomposition}.  The dotted
    line represents the boundary, which is not part of the tropical projective torus}
  \label{figure:tropical_types}
\end{figure}

\begin{remark}
From the viewpoint of tropical geometry the decomposition  $\typedecomp{V}$ can be deduced from the $\max$-tropical linear forms corresponding to the columns of $V$. For this, we pick variables $x_{1j},x_{2j},\ldots,x_{dj}$ for each column $v^{(j)}$ of $V$. The product of the tropical linear forms $\max(x_{1j}-v_{1j},x_{2j}-v_{2j},\ldots,x_{dj}-v_{dj})$ yields a homogeneous tropical polynomial $p$ in $d \cdot n$ variables $x_{ij}$. This defines a tropical hypersurface in $\RR^{d \cdot n}/\RR\1$ where the covectors come into play as the exponent vectors of (tropical) monomials in $p$. Substituting $x_{ij}$ by $y_i$ gives rise to the tropical hypersurface in $\RR^d/\RR\1$ which induces the cell decomposition of this space.
\end{remark}

\begin{theorem}\label{thm:envelope}
  The orthogonal projection from the boundary complex of $\envelope(V)$ onto $\RR^d$ induces a bijection between the
  envelope faces whose covector graph have no isolated node in $[n]$ and the cells in the covector decomposition
  $\typedecomp{V}$ of $\RR^d$.  This map is a piecewise linear isomorphism of polyhedral complexes.

  Each face whose covector graph neither has an isolated node in $[d]$ (nor an isolated node in $[n]$) maps to a cell in the covector decomposition of $\tcone(V)$.
\end{theorem}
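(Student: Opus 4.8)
The plan is to reduce the whole statement to a bookkeeping of covector graphs, with Proposition~\ref{prop:projection_face_sector} as the workhorse. For a face $F$ of $\envelope(V)=Q(W)$ write $\typegraph{F}$ for its covector graph, i.e. the subgraph of $\Bgraph(V)$ of all arcs $(i,j)$ on which $y_i-z_j=v_{ij}$ holds throughout $F$; then $F=F_{\typegraph{F}}(W)$, so a face is determined by its covector graph. Recall that a cell $C$ of $\typedecomp{V}$ carries its covector graph $\typegraph{C}$, the maximal $G$ with $C=X_G(V)$. First I would note that no cell has an isolated node in $[n]$: since $V$ has no all-$\infty$ column, each $\fan(v^{(j)})$ is a nonempty polyhedral decomposition of $\RR^d$ by Lemma~\ref{lem:sectors}, so every cell $C$ of the common refinement $\typedecomp{V}$ lies in some maximal cell $S_i(v^{(j)})$; then $X_{\typegraph{C}\cup(i,j)}(V)=C$, and maximality of $\typegraph{C}$ forces $(i,j)\in\typegraph{C}$.

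The crux is the identity $\typegraph{\pi_{[n]}(F)}=\typegraph{F}$ for every face $F$ whose covector graph $G:=\typegraph{F}$ has no isolated node in $[n]$. By Proposition~\ref{prop:projection_face_sector} we have $\pi_{[n]}(F)=X_G(V)$, which is nonempty (as $F$ is) and, grouping the sectors in $X_G(V)=\bigcap_{(i,j)\in G}S_i(v^{(j)})$ by their $[n]$-endpoint (every such endpoint occurs, as $G$ has no isolated node in $[n]$), is a nonempty intersection of cells, one from each $\fan(v^{(j)})$, hence a cell of $\typedecomp{V}$. Its covector graph contains $G$; if it contained a further arc $(i,j)$, then $X_{G\cup(i,j)}(V)=X_G(V)$, and applying the affine-isomorphism part of Proposition~\ref{prop:projection_face_sector} to both $G$ and $G\cup(i,j)$ (neither having an isolated node in $[n]$) would yield $\dim F_{G\cup(i,j)}(W)=\dim X_G(V)=\dim F$; but $F_{G\cup(i,j)}(W)=F\cap\{y_i-z_j=v_{ij}\}$ is a face of $F$, so a face of the same dimension equals $F$, meaning $(i,j)$ is tight on $F$, i.e. $(i,j)\in G$ — a contradiction. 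Conversely, for a cell $C$ a similar use of maximality of $\typegraph{C}$ shows that $F_{\typegraph{C}}(W)$ is a face with covector graph exactly $\typegraph{C}$ (no isolated node in $[n]$) which projects onto $C$. Hence the faces of $\envelope(V)$ with no isolated node in $[n]$ and the cells of $\typedecomp{V}$ are indexed by the very same family of graphs, and $\pi_{[n]}$ matches up the two indexings; this gives the asserted bijection.

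That this is a piecewise linear isomorphism of polyhedral complexes then follows formally: passing to a face of $F$ can only enlarge $\typegraph{F}$, so the faces with no isolated node in $[n]$ form a subcomplex of the boundary complex of $\envelope(V)$; on each of them $\pi_{[n]}$ is an affine isomorphism onto the corresponding cell by Proposition~\ref{prop:projection_face_sector}; and $F'\subseteq F$ iff $\pi_{[n]}(F')\subseteq\pi_{[n]}(F)$ (both being equivalent to $\typegraph{F}\subseteq\typegraph{F'}$), while for polyhedral complexes containment of cells is exactly the face relation, so the bijection and its inverse are morphisms of complexes. For the last assertion, let $F$ be a face whose covector graph $G$ has no isolated node in $[d]$ nor in $[n]$. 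By the bijection $\pi_{[n]}(F)=X_G(V)$ is a cell of $\typedecomp{V}$; since no node of $[d]$ is isolated, each $i\in[d]$ lies on an arc $(i,j_i)\in G$, so $X_G(V)\subseteq S_i(v^{(j_i)})$, and Lemma~\ref{lemma:sectors_tropical_hull} then gives $X_G(V)\subseteq\tcone(V)$. Thus $X_G(V)$ is a cell of the covector decomposition of $\tcone(V)$.

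The step I expect to be the main obstacle is the identity $\typegraph{\pi_{[n]}(F)}=\typegraph{F}$: one has to rule out that projecting away the $\RR^n$-coordinates turns a slack inequality of $\envelope(V)$ into a tight one on the image cell. This is precisely where the affine-isomorphism clause of Proposition~\ref{prop:projection_face_sector}, together with the elementary fact that a proper face of a polyhedron has strictly smaller dimension, is indispensable.
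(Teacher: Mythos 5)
Your proposal is correct and takes essentially the same route as the paper: the bijection and the piecewise linear isomorphism are extracted from Proposition~\ref{prop:projection_face_sector}, and the final claim about $\tcone(V)$ follows from Lemma~\ref{lemma:sectors_tropical_hull}. The paper's proof is a two-line citation of exactly these two results; your write-up merely makes explicit the bookkeeping of covector graphs (in particular the identity $\typegraph{\pi_{[n]}(F)}=\typegraph{F}$ via the dimension argument) that the paper leaves implicit.
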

\begin{proof}
  Ranging over all the faces whose covector graph has no isolated node in $[n]$ we obtain the bijection with Proposition
  \ref{prop:projection_face_sector}.  The definition of the covector of a cell combined with Lemma \ref{lemma:sectors_tropical_hull} characterizes when a cell in $\typedecomp{V}$ is contained in the tropical cone generated by the
  columns of $V$.
\end{proof}
\goodbreak

With Theorem~\ref{thm:regular_subdivison} the former implies the following.

\begin{corollary}[Structure Theorem of Tropical Convexity] \label{coro:dual_sub_tcone}
  The covector decomposition $\typedecomp{V}$ of $\RR^d$ is dual to the regular subdivision of the polytope
  \[
  \conv \SetOf{\pair{e_i}{e_j} \in \RR^d \times \RR^n}{(i,j) \in \Bgraph(V)}
  \]
  with weights given by $V$.  Moreover, the covector decomposition of $\tcone(V)$ is dual to the poset of interior cells.
\end{corollary}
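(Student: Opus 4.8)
The plan is to combine Theorems~\ref{thm:envelope} and~\ref{thm:regular_subdivison}. Write $\Bgraph=\Bgraph(V)$, put $P=\conv\{\pair{e_i}{e_j}:(i,j)\in\Bgraph\}$, and let $\Sigma$ be the regular subdivision of $P$ with height function $V$ supplied by Theorem~\ref{thm:regular_subdivison}. By Theorem~\ref{thm:envelope} the orthogonal projection $\pi_{[n]}$ is an isomorphism of polyhedral complexes from the subcomplex of the boundary complex of $\envelope(V)$ formed by those faces $F_G(W)$ whose covector graph $G$ has no isolated node in $[n]$ onto $\typedecomp{V}$, with $X_G(V)\subseteq X_{G'}(V)$ iff $F_G(W)\subseteq F_{G'}(W)$ iff $G\supseteq G'$ by Remark~\ref{rem:type_relations}. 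By Theorem~\ref{thm:regular_subdivison} the boundary complex of $\envelope(V)$ is anti-isomorphic to $\Sigma$; reading off the polar construction in its proof, this anti-isomorphism carries $F_G(W)$ — which is cut out of $\envelope(V)$ by making the inequalities indexed by the arcs of $G$ tight, hence singles out in the polar cone exactly the generators $\triple{v_{ij}}{-e_i}{e_j}$ with $(i,j)\in G$ — to the cell $\sigma_G$ of $\Sigma$ with vertex set $\{\pair{e_i}{e_j}:(i,j)\in G\}$, so $\sigma_G\subseteq\sigma_{G'}$ iff $G\subseteq G'$. Combining the equivalences gives $X_G(V)\subseteq X_{G'}(V)$ iff $\sigma_{G'}\subseteq\sigma_G$, so $X_G(V)\mapsto\sigma_G$ is an inclusion-reversing bijection from $\typedecomp{V}$ onto its image in $\Sigma$; this is the asserted duality.

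For the first claim it then remains to identify this image. A node $j\in[n]$ is isolated in $G$ exactly when $\sigma_G$ has no vertex of the form $\pair{e_i}{e_j}$, i.e.\ exactly when $\sigma_G$ lies in the face of $\Delta_{d-1}\times\Delta_{n-1}$ cut out by vanishing of the $j$th coordinate of the $\Delta_{n-1}$-factor; because no column of $V$ is identically $\infty$, the induced face of $P$ is proper. Hence the faces of $\envelope(V)$ without isolated node in $[n]$ — exactly those carrying the covectors of the cells of $\typedecomp{V}$ — correspond under $G\mapsto\sigma_G$ precisely to the cells of $\Sigma$ contained in no face of $\Delta_{d-1}\times\Delta_{n-1}$ on which a coordinate of the $\Delta_{n-1}$-factor vanishes. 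This is the content of ``$\typedecomp{V}$ is dual to the regular subdivision with weights $V$'', the ``weights given by $V$'' being the height function of Theorem~\ref{thm:regular_subdivison}.

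For the second claim I add Lemma~\ref{lemma:sectors_tropical_hull}. Fix a cell $C=X_G(V)$ with $G$ its maximal covector graph, and apply the criterion of the lemma at a point $z$ in the relative interior of $C$, where $z\in S_i(v^{(s)})$ holds precisely when $(i,s)\in G$: this shows $z\in\tcone(V)$ if and only if every node $i\in[d]$ is the tail of some arc of $G$. Passing to a proper subface of $C$ only enlarges the covector graph, and $\tcone(V)$ is closed, so $C\subseteq\tcone(V)$ if and only if $G$ has no isolated node in $[d]$. Since covector graphs automatically have no isolated node in $[n]$, the cells of $\typedecomp{V}$ contained in $\tcone(V)$ are exactly those whose covector graph has no isolated node at all; by the translation above these correspond under $G\mapsto\sigma_G$ to the cells $\sigma_G$ of $\Sigma$ whose vertex set has no isolated node, i.e.\ to the cells of $\Sigma$ contained in no proper face of $\Delta_{d-1}\times\Delta_{n-1}$ — the interior cells of $\Sigma$. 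As the bijection reverses inclusions, the covector decomposition of $\tcone(V)$ is dual to the poset of interior cells.

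The argument is essentially formal; the real work is combinatorial bookkeeping. I expect the only mild obstacle to be keeping the two shores of the bipartite graph apart along the chain ``sector $\rightarrow$ face of $\envelope(V)$ $\rightarrow$ generator of the polar cone $\rightarrow$ vertex of a cell of $\Sigma$'', matching an isolated node on the bottom shore to a coordinate hyperplane of the $\Delta_{n-1}$-factor and one on the top shore to a coordinate hyperplane of the $\Delta_{d-1}$-factor, and taking due care of the passage to the subpolytope $P$ when $V$ has $\infty$-entries. One should also bear in mind that Theorem~\ref{thm:regular_subdivison} matches the \emph{proper} faces of $\envelope(V)$ with the cells of $\Sigma$, so that, for instance, the maximal cells of $\Sigma$ are dual to the minimal proper faces of $\envelope(V)$, i.e.\ to its vertices modulo the lineality space.
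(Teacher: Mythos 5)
Your argument is exactly the paper's: the corollary is stated there as an immediate consequence of combining Theorem~\ref{thm:envelope} with Theorem~\ref{thm:regular_subdivison}, and your write-up simply fills in the bookkeeping (covector graphs versus vertex sets of cells, isolated nodes versus boundary faces of $\Delta_{d-1}\times\Delta_{n-1}$) that the paper leaves implicit. The details you supply, including the use of Lemma~\ref{lemma:sectors_tropical_hull} for the statement about $\tcone(V)$, are correct and consistent with how those two theorems are proved in the paper.
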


The result above is the same as \cite[Corollary~4.2]{FinkRincon:1305.6329}; their proof is based on mixed subdivisions
and the Cayley Trick \cite[\S9.2]{Triangulations}.

Note that the envelope of a matrix whose coefficients are $0$ or $\infty$ is a braid cone, and so Theorem~\ref{thm:partition} applies to describe the combinatorics.
The min-tropical cones corresponding to these matrices are tropical analogues of ordinary $0/1$-polytopes.

\begin{corollary}
  Let $V$ be a $d\times n$-matrix whose coefficients are $\infty$ or $0$.  A partition $E$ of $[d]\dcup [n]$ defines a face of the polyhedral fan $\typedecomp{V} \subseteq \RR^d$ with apex $\0$ if and only if 
  \begin{enumerate}[label = (\roman*)]
  \item for each part $K$ of $E$ the induced subgraph of $\Bgraph(V)$ on $K$ is weakly connected, 
  \item the minor of $\Bgraph(V)$ which results from simultaneously contracting each part of $E$ does not contain any directed cycle, and
  \item no part of $E$ is a single element of $[n]$.
  \end{enumerate}
\end{corollary}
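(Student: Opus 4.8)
The plan is to combine Theorem~\ref{thm:envelope} with Theorem~\ref{thm:partition}, after observing that for a $0/\infty$-matrix $V$ the envelope $\envelope(V)$ is an honest digraph cone. First I would record that, by \eqref{eq:W}, the matrix $W$ is in this case the weighted adjacency matrix of the bipartite digraph $\Bgraph(V)$ with all arc weights equal to zero; up to relabeling the node set as $[d{+}n]$ this gives $\envelope(V)=Q(W)=Q(\Bgraph(V),\0)$, a digraph cone in the sense of Section~\ref{sec:weighted}. Likewise, by \eqref{eq:sector} each sector $S_i(v^{(j)})$ with $(i,j)\in\Bgraph(V)$ is the digraph cone $\SetOf{z}{z_\ell\leq z_i \text{ for } v_{\ell j}=0}$, a polyhedral cone with apex $\0$; hence every cell $X_G(V)$ of $\typedecomp{V}$ is a cone with apex $\0$, and $\typedecomp{V}$ is indeed a polyhedral fan with apex $\0$.

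Next I would invoke the two structural results. By Theorem~\ref{thm:envelope}, orthogonal projection onto $\RR^d$ is a bijection between those faces of $Q(\Bgraph(V),\0)$ whose covector graph has no isolated node in $[n]$ and the cells of $\typedecomp{V}$ (and an isomorphism of polyhedral complexes). On the other hand, since $\Bgraph(V)$ carries only zero weights, Theorem~\ref{thm:partition} identifies \emph{all} faces of $Q(\Bgraph(V),\0)$ with the partitions $E$ of $[d]\dcup[n]$ satisfying (i) and (ii), a face being sent to its equality partition. So it remains only to check that, under this identification, the extra condition ``the covector graph of the face has no isolated node in $[n]$'' corresponds exactly to condition (iii).

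For this last step the key observation is that $\Bgraph(V)$, being bipartite and directed from $[d]$ to $[n]$, has no directed cycle, hence no directed cycle of weight zero, so by Lemma~\ref{lemma:zero_cycles}\ref{lemma:item:zero_cycles} together with Lemma~\ref{lemma:dim equality graph} the equality partition $E(W)$ of $Q(W)$ is the discrete partition into singletons. By Lemma~\ref{lemma: face of wdp} the equality partition of the face $F_G(W)$ is then obtained from the discrete partition by merging the parts of $i$ and $j$ along each arc $(i,j)\in G$, i.e.\ it is the partition of $[d]\dcup[n]$ into the vertex sets of the weakly connected components of $G$, with every node missed by all arcs of $G$ forming its own singleton part. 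Since each arc of $G\subseteq[d]\times[n]$ joins a $[d]$-node to an $[n]$-node, a part is a single element of $[n]$ precisely when that node is isolated in $G$; hence the equality partition satisfies (iii) if and only if the covector graph $G$ has no isolated node in $[n]$. Assembling the two bijections yields the claim. I expect this bookkeeping — reconciling the equality partition of an envelope face with the weakly connected components of its covector graph, and matching it against the faces singled out in Theorem~\ref{thm:envelope} — to be the only genuine obstacle; the rest is direct citation.
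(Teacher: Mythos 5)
Your argument is correct and follows exactly the route the paper intends: it leaves this corollary without a written proof, remarking only that for a $0/\infty$-matrix the envelope is a digraph cone so that Theorem~\ref{thm:partition} applies, in combination with the bijection of Theorem~\ref{thm:envelope}. Your reconciliation of condition (iii) with the absence of isolated $[n]$-nodes in the covector graph (via the equality partition being the partition into weakly connected components of $G$) is the right bookkeeping and fills in precisely the details the paper omits.
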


As projections of the faces of the envelope $\envelope(V)$ the cones in such a fan can encode an arbitrary digraph on $d$ nodes.

\begin{example}
  The maximal cell in Figure \ref{figure:infinite_convex_hull} is the intersection of the sectors $S_3(\transpose{(0,1,0)})$, $S_2(\transpose{(0,1,2)})$ and $S_1(\transpose{(0,\infty,\infty)})$. On the other hand, it is the projection of the face of the envelope $\envelope(V)$ corresponding to the graph on three nodes with the arcs $(1,3),(2,2),(3,1)$ for the matrix $V$ from Example \ref{example:envelope_with_infty}. 

The recession cone of this face is given by the graph in Figure \ref{figure:cone_graph_face_envelope.tex}. It has the strong components $1\times3$ and $23\times12$. Hence, a minimal generator of the pointed part of the cone is $\transpose{(0,1,1;1,1,0)}$ by Proposition \ref{prop:rays_of_cone}. This projects to the ray generated as the positive span of $\transpose{(0,1,1)}$ which is indeed contained in the tropical cone $\tcone(V)$.
\end{example}

\begin{figure}[ht]
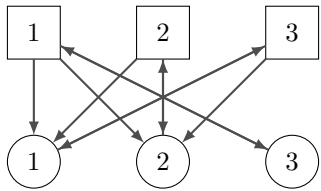

  \centering
  \inclpic{ex_envelope_face_ray}
  \caption{Bipartite graph for the face projecting to the maximal cell in Figure \ref{figure:infinite_convex_hull}}
  \label{figure:cone_graph_face_envelope.tex}
\end{figure}

\begin{remark}
  Clearly, we can also project the envelope $\envelope(V)$ onto the $[n]$ coordinates of the lower shore.  This yields a
  covector decomposition of $\RR^n$ induced by the $d$ rows of the matrix $V$.  Applying Theorem~\ref{thm:envelope} to the
  transpose $\transpose{V}$ gives an isomorphism between the envelope faces without any isolated node in $[d]$ and the
  cells in the covector decomposition of $\RR^n$ induced by the rows of $V$.

  Therefore, the cells whose covector graphs do not have any isolated node in their covector graphs project affinely isomorphic
  to $\RR^d$ as well as to $\RR^n$.  This entails an isomorphism between the covector decompositions of $\tcone(V)$ and
  $\tcone(\transpose{V})$.
\end{remark}

\begin{proposition} \label{prop:char_type_graph}
   Let $G$ be a subgraph of $[d]\times[n]$. Then the following statements are equivalent.
   \begin{enumerate}[label = (\roman*)]
   \item \label{item:point_env}
     There is a point $(y,z) \in \envelope(V)=Q(W)$ for which the inequality corresponding to $(i,j) \in \Gamma(W)$ is attained with equality if and only if $(i,j) \in G$.
   \item \label{item:matching}
     \begin{enumerate}[label = (\alph*)]
     \item For every pair of subsets $D \subseteq [d]$ and $N \subseteq [n]$ with $|D| = |N|$, every perfect matching of $G$ restricted to
       $D \dcup N$ is a minimal matching of the complete bipartite graph $D\times N$ with the weights given by the corresponding submatrix of~$V$;
     \item if there are more minimal perfect matchings in $D\times N$ then each of them is contained in $G$.
     \end{enumerate}
   \item \label{item:cycles}
     \begin{enumerate}[label = (\alph*)]
     \item The graph $\Gamma(W\#G)$ does not have any negative cycle, and
     \item every arc of $\Gamma(W)$ in $\Gamma(W\#G)$ that is contained in a cycle of weight zero is contained in $G$.
     \end{enumerate}
   \end{enumerate}
\end{proposition}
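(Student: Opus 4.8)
The plan is to establish \ref{item:point_env} $\Leftrightarrow$ \ref{item:cycles} from the theory of weighted digraph polyhedra developed above, and then \ref{item:cycles} $\Leftrightarrow$ \ref{item:matching} by translating directed cycles of the bipartite digraph $\Gamma(W\#G)$ into pairs of perfect matchings. We may assume $G\subseteq\Bgraph(V)$, so that $W\#G$ is defined; I identify $\envelope(V)$ with $Q(W)$ and recall from Lemma~\ref{lemma: face of wdp} that $F_G(W)=Q(W\#G)$. Here an arc $(i,j)$ of $\Gamma(W)$ is a forward arc of $\Bgraph(V)$ of weight $v_{ij}$, and $\Gamma(W\#G)$ arises from $\Bgraph(V)$ by adjoining, for each $(i,j)\in G$, the reverse arc $(j,i)$ of weight $-v_{ij}$.

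First I would note that a point as in \ref{item:point_env} necessarily lies in the relative interior of the face $F_G(W)$, and that, conversely, any relative interior point of $F_G(W)$ has this property as soon as no inequality $y_i-z_j\leq v_{ij}$ with $(i,j)\in\Bgraph(V)\setminus G$ is satisfied with equality on all of $Q(W\#G)$; hence \ref{item:point_env} holds if and only if $F_G(W)=Q(W\#G)$ is nonempty and that condition on the remaining inequalities is met. By Lemma~\ref{lemma:feasible} nonemptiness is equivalent to $\Gamma(W\#G)$ having no negative cycle, i.e.\ to \ref{item:cycles}(a). For the other condition I would prove the claim: for $(i,j)\in\Bgraph(V)$ the inequality $y_i-z_j\leq v_{ij}$ holds with equality on all of $Q(W\#G)$ if and only if $(i,j)$ lies on a directed cycle of weight zero in $\Gamma(W\#G)$. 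For one direction, such a cycle yields a directed path from $j$ to $i$ of weight $-v_{ij}$, so $(W\#G)^*_{ji}\leq -v_{ij}$ and hence $z_j-y_i\leq -v_{ij}$ on $Q(W\#G)=Q((W\#G)^*)$, which together with $y_i-z_j\leq v_{ij}$ forces equality. For the other direction, if equality holds throughout then, since the shortest $i$-$j$-path has weight $(W\#G)^*_{ij}\leq v_{ij}$ while $y_i-z_j\leq (W\#G)^*_{ij}$ on $Q(W\#G)$, the hypothesis forces $(W\#G)^*_{ij}=v_{ij}$; thus $\{i,j\}$ is an edge of the equality graph $E(W\#G)$, and Lemma~\ref{lemma:zero_cycles}\ref{lemma:item:zero_cycles} gives $(W\#G)^*_{ji}=-v_{ij}$, which together with the arc $(i,j)$ is the desired weight-zero cycle. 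Since every arc of $G$ already lies on a weight-zero $2$-cycle of $\Gamma(W\#G)$, the condition now says exactly that $G$ equals the set of arcs of $\Bgraph(V)$ lying on a weight-zero cycle of $\Gamma(W\#G)$; as $G$ is always contained in that set, this is precisely \ref{item:cycles}(b). Thus \ref{item:point_env} $\Leftrightarrow$ \ref{item:cycles}.

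For \ref{item:cycles} $\Leftrightarrow$ \ref{item:matching} I would use the dictionary between cycles and matchings. Because $\Gamma(W\#G)$ is bipartite, a simple directed cycle runs over vertex sets $D\subseteq[d]$ and $N\subseteq[n]$ with $|D|=|N|$: its forward arcs form a perfect matching $M_1$ of $D\times N$ with $M_1\subseteq\Bgraph(V)$, its backward arcs (reverses of $G$-arcs) a perfect matching $M_2\subseteq G$ of $D\times N$, and its weight equals $w(M_1)-w(M_2)$, where $w(M)=\sum_{(i,j)\in M}v_{ij}$. Conversely, any such pair, oriented in this way, decomposes into directed cycles with weights summing to $w(M_1)-w(M_2)$. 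Hence \ref{item:cycles}(a) holds if and only if for all $D,N$ with $|D|=|N|$ every perfect matching of $G$ restricted to $D\dcup N$ is a minimum-weight perfect matching of the complete bipartite graph $D\times N$ with weights from $V$, which is \ref{item:matching}(a). Granting this, a weight-zero cycle through a forward arc $(i,j)$ gives matchings $M_1,M_2$ with $w(M_1)=w(M_2)$, so $M_1$ is minimum as well while $M_1\neq M_2$, exhibiting at least two minimum matchings of $D\times N$; then \ref{item:matching}(b) forces $M_1\subseteq G$, so $(i,j)\in G$. Conversely, if $D\times N$ carries a minimum matching $M_2\subseteq G$ and a second minimum matching $M_1\neq M_2$, then orienting $M_1$ forward and $M_2$ backward produces directed cycles of total weight zero, hence---by \ref{item:cycles}(a)---each of weight zero, so every forward arc of $M_1$ lies on a weight-zero cycle and \ref{item:cycles}(b) yields $M_1\subseteq G$; this establishes \ref{item:matching}(b). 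Together with \ref{item:matching}(a) we get \ref{item:cycles} $\Leftrightarrow$ \ref{item:matching}.

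The delicate point is the bookkeeping in this last step: one must handle non-simple cycles and arcs shared by $M_1$ and $M_2$ (which contribute only trivial weight-zero $2$-cycles), justify restricting $D$ and $N$ to the vertex set of a single cycle (invoking \ref{item:cycles}(a), so that a weight-zero closed walk through $(i,j)$ actually forces a weight-zero \emph{simple} cycle through $(i,j)$), and deal with $\infty$-entries of $V$ (a matching meeting a non-arc has weight $+\infty$ and so never beats a finite matching). I would also make explicit that in \ref{item:matching}(b) the phrase ``more minimal perfect matchings'' is to be read under the hypothesis of \ref{item:matching}(a), i.e.\ only for those $D,N$ for which $G$ restricted to $D\dcup N$ already contains a perfect matching; without this proviso the assertion is false, as $V\equiv 0$ with $G=\emptyset$ shows. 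Once the dictionary is pinned down, the remaining steps are short alternating-cycle exchanges for perfect matchings.
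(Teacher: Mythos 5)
Your proof is correct and rests on the same ingredients as the paper's: the dictionary between directed cycles of the bipartite digraph $\Gamma(W\#G)$ and pairs of perfect matchings of $D\times N$, the characterization of \ref{item:point_env} via the inequalities that are identically tight on the face $F_G(W)$, and Lemma~\ref{lemma:zero_cycles}\ref{lemma:item:zero_cycles}; the only difference is organizational, in that the paper proves the cyclic chain \ref{item:point_env}$\Rightarrow$\ref{item:matching}$\Rightarrow$\ref{item:cycles}$\Rightarrow$\ref{item:point_env} (obtaining \ref{item:point_env}$\Rightarrow$\ref{item:matching} directly by summing the tight inequalities over a matching at the witness point), while you prove the two biconditionals \ref{item:point_env}$\Leftrightarrow$\ref{item:cycles} and \ref{item:cycles}$\Leftrightarrow$\ref{item:matching}. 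Your caveat that \ref{item:matching}(b) is to be read only for those $D,N$ on which $G$ restricts to a perfect matching agrees with how the paper's own proof uses that condition.
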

\begin{proof}
  To conclude \ref{item:matching} from \ref{item:point_env} let $D \subseteq [d]$ and $N \subseteq [n]$ with $|D| = |N|$ so that there is a
  perfect matching $\mathcal{M}_0$ in $D\times N \cap G$. Let $\mathcal{M}_1$ be any other perfect matching in
  $D\times N$. Then considering the corresponding inequalities and equations implies after summing up and reordering
  \[
  \sum_{(i,j) \in \mathcal{M}_0} v_{ij} \ = \ \sum_{i \in D} y_i - \sum_{j \in N} z_j \ \leq \ \sum_{(i,j) \in \mathcal{M}_1} v_{ij} \enspace .
  \]
  Therefore, $\mathcal{M}_0$ is a minimal perfect matching. Furthermore, if $\mathcal{M}_1$ is also a minimal perfect
  matching, then equality follows in the former inequality. That implies the equations $y_i - z_j = v_{ij}$ for every
  $(i,j) \in \mathcal{M}_1$. Hence, every arc in $\mathcal{M}_1$ has to be contained in $G$.

  We now want to show that this implies \ref{item:cycles}. For this, we consider a non-positive cycle in $\Gamma(W\#G)$
  with vertex set $D \dcup N$. Let $A_W$ be the set of arcs directed from $[d]$ to $[n]$ and $A_G$ the set of arcs directed
  from $[n]$ to $[d]$. Since $\Gamma(W\#G)$ is bipartite, this implies $|D| = |N| = |A_W| = |A_G|$ and the arc sets 
  $A_W$ and $A_G$ define perfect matchings in $D\times N$.

  By definition of $\Gamma(W\#G)$ we obtain for the weight of the cycle
  \[
  \sum_{(i,j) \in A_W} v_{ij} + \sum_{(j,i) \in A_G} (-v_{ij})\ \leq\ 0 \qquad \text{ or, equivalently, } \qquad \sum_{(i,j) \in A_W} v_{ij}\ \leq\ \sum_{(j,i) \in A_G} v_{ij} \enspace .
  \]
  If the inequality is strict, this contradicts the minimality of the matching via \ref{item:matching}. If the cycle has
  weight zero and the inequality becomes an equality, this implies that $A_W$ also represents a minimal perfect
  matching. With \ref{item:matching} every arc in $A_W$ is also in $G$ then.

  The final goal is to lead \ref{item:cycles} back to \ref{item:point_env}. If $\Gamma(W\#G)$ does not contain a
  negative cycle, the weighted digraph polyhedron $Q(W\#G)$ is not empty. Therefore, there is $(y,z)$ in the interior of the face $Q(W\#G)
  \subseteq \RR^d\times\RR^n$. Let $(i,j)$ be some arc of $\Gamma(W)$. If the equality $y_i - z_j = v_{ij}$ holds,
  Lemma \ref{lemma:zero_cycles}\ref{lemma:item:zero_cycles} yields that there is a cycle of weight zero containing the arc $(i,j)$. With
  \ref{item:cycles} we obtain $(i,j) \in G$. On the other hand, for $(i,j) \in G$, the graph $\Gamma(W\#G)$ contains the
  cycle $(i,j,i)$ of weight zero, and the claim follows.
\end{proof}

  Together with Proposition~\ref{prop:projection_face_sector} this also gives a characterization for the covector graphs which are contained in the tropical cone $\tcone(V)$.
  Furthermore, we obtain a corollary concerning the dimension of a cell.
  
\begin{corollary} \label{coro:dim_type}
   If $G \subseteq \Bgraph(V)$ is a covector graph for $V$, the dimension of $F_G(W)$ and thus of $X_G(V)$ equals the number of weak components of $G$.
\end{corollary}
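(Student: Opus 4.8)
The plan is to lift the statement to the envelope: first compute $\dim F_G(W)$ as the number of blocks of the equality partition $E(W\#G)$, then identify that partition with the partition of $[d]\dcup[n]$ into weak components of $G$, and finally transport the dimension down to $X_G(V)$ through the affine isomorphism furnished by Proposition~\ref{prop:projection_face_sector}.

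For the first step, recall that $F_G(W)=Q(W\#G)$ by Lemma~\ref{lemma: face of wdp}, so $\dim F_G(W)=|E(W\#G)|$ by Lemma~\ref{lemma:dim equality graph}. Because $W$ has the shape \eqref{eq:W}, the digraph $\Gamma(W)=\Bgraph(V)$ carries only arcs directed from $[d]$ to $[n]$; hence $\Gamma(W\#G)$ is bipartite, with forward arcs (from $[d]$ to $[n]$) exactly those of $\Bgraph(V)$ and backward arcs (from $[n]$ to $[d]$) exactly the reversals of the arcs of $G$, with negated weights. I then claim that $E(W\#G)$ is the partition of $[d]\dcup[n]$ into weak components of $G$. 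One inclusion is easy: for every arc $(i,j)\in G$ the triple $(i,j,i)$ is a directed cycle of weight zero in $\Gamma(W\#G)$, so $\{i,j\}$ lies in the equality graph by Lemma~\ref{lemma:zero_cycles}\ref{lemma:item:zero_cycles}, and transitivity merges any two nodes joined by a path in $G$. For the reverse inclusion, suppose $i\neq\ell$ lie in a common block of $E(W\#G)$; by Lemma~\ref{lemma:zero_cycles}\ref{lemma:item:zero_cycles} they lie on a common directed cycle of weight zero in $\Gamma(W\#G)$. Since $G$ is a covector graph it satisfies condition~\ref{item:point_env} of Proposition~\ref{prop:char_type_graph} — a relative interior point of $F_G(W)$ attains equality exactly along the arcs of $G$, by maximality of the covector graph together with Proposition~\ref{prop:projection_face_sector} — hence it also satisfies condition~\ref{item:cycles}. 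Consequently every forward arc on that zero-weight cycle lies in $G$, and every backward arc is a reversed arc of $G$, so all nodes of the cycle, and in particular $i$ and $\ell$, lie in a single weak component of $G$. This establishes the claim, and therefore $\dim F_G(W)=|E(W\#G)|$ equals the number of weak components of $G$ (isolated nodes of $G$, all of which lie in $[d]$, being counted, since they contribute singleton blocks to $E(W\#G)$).

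To finish, note that a covector graph has no isolated node in $[n]$ — every column index appears in the covector, since each complex $\fan(v^{(j)})$ covers $\RR^d$. Thus Proposition~\ref{prop:projection_face_sector} applies and $\pi_{[n]}$ restricts to an affine isomorphism $F_G(W)\to X_G(V)$ (both nonempty, $X_G(V)$ being the cell in question), whence $\dim X_G(V)=\dim F_G(W)$ is the number of weak components of $G$.

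The step I expect to be the obstacle is the reverse inclusion, i.e.\ that each block of $E(W\#G)$ lies inside a single weak component of $G$: for an arbitrary subgraph the operation $W\mapsto W\#G$ may create cycles of weight zero that are not supported on $G$, in which case $F_G(W)$ collapses to a dimension strictly below the number of weak components of $G$. Ruling this out is precisely the content of Proposition~\ref{prop:char_type_graph}\ref{item:cycles}(b), and this is the only place where the hypothesis that $G$ be a covector graph enters in an essential way.
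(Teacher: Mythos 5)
Your proposal is correct and follows essentially the same route as the paper: it uses Proposition~\ref{prop:char_type_graph}\ref{item:cycles} to show that path-connectivity in $G$ coincides with membership in a zero-weight cycle of $\Gamma(W\#G)$, identifies this with the equality partition via Lemma~\ref{lemma:zero_cycles}\ref{lemma:item:zero_cycles}, reads off the dimension with Lemma~\ref{lemma:dim equality graph}, and transfers it to $X_G(V)$ by Proposition~\ref{prop:projection_face_sector}. You simply spell out more explicitly than the paper why a covector graph satisfies the hypotheses of Proposition~\ref{prop:char_type_graph} and why isolated nodes can only occur in $[d]$, which is a welcome clarification rather than a deviation.
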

\begin{proof}
  By property \ref{item:cycles} of Proposition~\ref{prop:char_type_graph} two nodes in $[d] \dcup [n]$ are connected by a
  path in $G$ if and only if they are in a cycle of weight zero in $\Gamma(W\#G)$. By Lemma~\ref{lemma:zero_cycles}\ref{lemma:item:zero_cycles}
  these cycles exactly define the equality partition of $F_G(W)$. Finally, Lemma~\ref{lemma:dim equality graph} connects this to the
  dimension. Furthermore, Proposition~\ref{prop:projection_face_sector} shows the equality for $F_G(W)$ and $X_G(V)$.
\end{proof}

\begin{remark}
  The envelope of $V$ is the set of points $(y,z)$ satisfying
  \[
  y_i - z_j \ \leq \ v_{ij} \quad \text{ for } (i,j) \in \Bgraph \, .
  \]
  Substituting $z_j$ by $-z_j$ yields
  \begin{equation} \label{eq:max_cover_lp}
    y_i + z_j \ \leq \ v_{ij} \quad \text{ for } (i,j) \in \Bgraph \enspace ,
  \end{equation}
  which is the form of the envelope in \cite{DevelinSturmfels:2004}.  Maximizing the coordinate sum over the polyhedron
  defined in~\eqref{eq:max_cover_lp} is dual to finding a minimum weight matching by Egerv\'ary's Theorem \cite[Theorem
  17.1]{Schrijver:CO:A}.  This gives rise to a primal-dual algorithm for computing matchings and vertex covers; the method is
  explained in detail in \cite[Theorem 11.1]{PapadiCO}.  A partial matching of minimal weight in a subgraph can be
  expanded by growing so-called `Hungarian trees', which are shortest path trees in a modified graph.  The partial
  matchings, which encode tight inequalities in the dual description, are collected in the equality subgraphs. By
  Proposition~\ref{prop:char_type_graph} one can deduce that these equality subgraphs are exactly the covector graphs of the
  dual points $(y,z)$.
\end{remark}

% In \cite[\S2.2.4]{Butkovic:10} one can find a broader list connecting tropical objects to combinatorics and combinatorial optimization.

\subsection{Tropical half-spaces}
The sectors $S_i(u)$ with $u_i\neq\infty$ from Lemma~\ref{lem:sectors}, which are responsible for the combinatorial properties of $\min$-tropical point configurations, are precisely the (closures of the) complements of the $\max$-tropical hyperplane with apex $u$.
The same combinatorial objects also control systems of tropical linear inequalities.
To see this it is convenient to switch to $\max$ as the tropical addition now.

Let $c \in \TT_{\min}^d$ and let $I$ be a non-empty proper subset of $[d]$, i.e., $I\neq\emptyset$ and $I\neq[d]$. Then the set $\bigcup_{\ell \in I} S_{\ell}(c)$ is a
\emph{$\max$-tropical half-space} with \emph{apex} $c$.  This is exactly the set of points in $\RR^d$ which satisfies
the homogeneous $\max$-tropical linear inequality
\[
\max_{\ell \in [d] \setminus I} (-c_{\ell} + x_{\ell}) \ \leq \ \max_{\ell \in I} (-c_{\ell} + x_{\ell}) \enspace .
\]
Since here we allow for $\infty$ as a coordinate in $c$ this definition is more general than the one in \cite{MJ:2005}.
Notice that $-c$ is an element of $\TT_{\max}^d$ and that the halfspaces are defined over the $\max$-tropical semiring.
Each tropical cone is the intersection of finitely many tropical half-spaces and conversely.
This is proved in \cite[Theorem~1]{GaubertKatz:11}; note that the proof of \cite[Theorem~3.6]{MJ:2005} (which claims the same) is not valid as it rests on \cite[Proposition~3.3]{MJ:2005}, which is false.
In \cite[\S7.6]{Butkovic:10} it is shown that the solution set of any
system of max-tropical linear equalities is finitely generated.  Since $u\leq v$ holds if and only if $\max(u,v)=v$,
i.e., since in the tropical setting studying systems of linear equalities amounts to the same as studying systems of
linear inequalities, that result is essentially equivalent to \cite[Theorem~1]{GaubertKatz:11}.

\begin{remark} \label{rem:wdp_trop_ineq}
  Let $W$ be a $k{\times}k$-matrix.  Each defining inequality \eqref{eq:defining} of the weighted digraph polyhedron $Q(W)$
  can be rewritten as 
  \[
  x_i - w_{ij}\ \leq \ x_j \qquad \text{ for each arc $(i,j)$ in $\Gamma(W)$} \enspace .
  \]
  Fixing $j$ and varying $i$ then yields
  \[
  \max_{i \in [k]}(x_i - w_{ij})  \ \leq \ x_j \qquad \text{ for each } j \in [k] \enspace .
  \]
  Looking at all $j$ simultaneously we obtain the inequality
  \[
  (-\transpose{W}) \odot_{\max} x \ \leq \ x
  \]
  of column vectors.  This means that each weighted digraph polyhedron is a max-tropical cone.  In
  \cite[\S1.6.2 and \S2]{Butkovic:10} a vector $x$ satisfying the inequality above is called a `subeigenvector' of the matrix $-\transpose{W}$.
\end{remark}

We now want to introduce notation for inequality descriptions of tropical cones which is suitable for our combinatorial
approach.  Let $V=(v_{ij}) \in \TT_{\min}^{d{\times}n}$ and let $\Psi$ be a subgraph of the complete bipartite graph
$[d]\times[n]$ with arcs directed from $[d]$ to $[n]$. We define
\begin{equation} \label{eq:section_complex}
  \thalf(V,\Psi) \ = \ \bigcap_{j \in [n]} \bigcup_{(i,j) \in \Psi} S_{i}(v^{(j)}) \enspace .
\end{equation}
That is, $\thalf(V,\Psi)$ comprises those points $x \in \RR^d$ which satisfy the homogeneous $\max$-tropical linear inequalities
\[
\max_{i \in [d],\, (i,j) \not\in \Psi}(-v_{ij} + x_i) \ \leq \ \max_{i \in [d],\, (i,j) \in \Psi}(-v_{ij} + x_i)
\]
for each $j\in[n]$.
In our notation the columns of the matrix $V$ collect the apices of the tropical half-spaces, and the graph $\Psi$ lists the sectors per half-space.
In \cite[\S7]{Butkovic:10} exterior descriptions of tropical cones like
\eqref{eq:section_complex} are discussed under the name `two-sided max-linear systems'.  To phrase our results below it
is convenient to introduce two sets of subgraphs of $[d]\times[n]$, both of which depend on $\Psi$.  We let
\[
\begin{aligned}
  \cG_\Psi \ &= \ \SetOf{G \subseteq \Psi}{\text{every node in } [n] \text{ has degree } 1 \text{ in } G} \qquad \text{and}\\
  \cH_\Psi \ &= \ \SetOf{H \subseteq [d]\times[n]}{\text{every node in } [n] \text{ has degree } \geq 1 \text{ in }
    \Psi\cap H} \enspace ,
\end{aligned}
\]
which gives the following.
\begin{proposition}\label{prop:graphs_types_cells}
  For each graph $H\in\cH_\Psi$ the cell $X_H$, which may be empty, is contained in $\thalf(V,\Psi)$.
  Moreover, $\cG_\Psi \subseteq \cH_\Psi$, and we have
  \[
  \thalf(V,\Psi) % \ = \ \bigcap_{j \in [n]} \bigcup_{(i,j) \in \Psi} S_{i}(v^{(j)}) 
  \ = \ \bigcup_{G \in \cG_\Psi} \bigcap_{(i,j) \in G} S_{i}(v^{(j)}) \ = \ \bigcup_{H \in \cH_\Psi} \bigcap_{(i,j) \in H}  S_{i}(v^{(j)}) \enspace .
  \]
\end{proposition}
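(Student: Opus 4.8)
The plan is to close a short cycle of inclusions $A\subseteq B\subseteq C\subseteq A$ among the sets $A:=\bigcup_{G\in\cG_\Psi}X_G$, $B:=\bigcup_{H\in\cH_\Psi}X_H$ and $C:=\thalf(V,\Psi)$, where $X_G:=\bigcap_{(i,j)\in G}S_i(v^{(j)})$ is the cell from \eqref{eq:def_type_cell}; once this is done the three sets coincide, and both displayed equalities of the proposition follow. Throughout I identify a graph with its arc set, so that ``node $j\in[n]$ has degree $\geq 1$ in a graph $K$'' simply means ``$(i,j)\in K$ for some $i$''. I also assume, without loss of generality, that every arc $(i,j)$ of $\Psi$ has $v_{ij}\neq\infty$: otherwise one replaces $\Psi$ by the subgraph of its finite-entry arcs, which changes none of $A$, $B$, $C$ (a discarded arc drops out of the right-hand $\max$ in the inequality form of $\thalf(V,\Psi)$, and any $G$ or $H$ using it gives the empty cell). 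Granting this, $\cG_\Psi\subseteq\cH_\Psi$, whence $A\subseteq B$: for $G\in\cG_\Psi$ we have $G\subseteq\Psi$, so $\Psi\cap G=G$, in which every node of $[n]$ has degree $1$.

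The step $B\subseteq C$ is precisely the first assertion of the proposition, that $X_H\subseteq\thalf(V,\Psi)$ for all $H\in\cH_\Psi$, and it subsumes the vacuous case $X_H=\emptyset$. Take $x\in X_H$ and fix $j\in[n]$. Since $H\in\cH_\Psi$, there is an arc $(i,j)\in\Psi\cap H$; from $(i,j)\in H$ and $x\in X_H=\bigcap_{(i',j')\in H}S_{i'}(v^{(j')})$ we get $x\in S_i(v^{(j)})$, and as $(i,j)\in\Psi$ this gives $x\in\bigcup_{(i,j)\in\Psi}S_i(v^{(j)})$. Since $j$ was arbitrary, $x\in\thalf(V,\Psi)$; hence $X_H\subseteq C$ for every $H\in\cH_\Psi$, i.e.\ $B\subseteq C$.

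The remaining inclusion $C\subseteq A$ is a witness selection, and it is the main step (straightforward, but where all the content lies). Given $x\in\thalf(V,\Psi)=\bigcap_{j\in[n]}\bigcup_{(i,j)\in\Psi}S_i(v^{(j)})$, for each $j\in[n]$ choose one index $i_j\in[d]$ with $(i_j,j)\in\Psi$ and $x\in S_{i_j}(v^{(j)})$, and set $G:=\SetOf{(i_j,j)}{j\in[n]}$. Then $G\subseteq\Psi$ and every node of $[n]$ has degree exactly $1$ in $G$ (nodes of $[d]$ may occur several times, which $\cG_\Psi$ does not forbid), so $G\in\cG_\Psi$; and $x\in\bigcap_{j\in[n]}S_{i_j}(v^{(j)})=X_G\subseteq A$. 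This closes the cycle, so $A=B=C$, which is the assertion. Apart from the minor $\infty$-bookkeeping disposed of at the outset, every step is routine; I expect no genuine obstacle.
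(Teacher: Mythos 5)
Your proof is correct and takes essentially the same route as the paper's: the inclusion $C\subseteq A$ via witness selection is exactly the paper's ``reordering of intersections and unions'' (distributivity), and your $A\subseteq B\subseteq C$ steps repackage the paper's observation that $\cG_\Psi\subseteq\cH_\Psi$ and that each $X_H$ with $H\in\cH_\Psi$ is contained in some $X_G$ with $G\in\cG_\Psi$. Your explicit handling of arcs $(i,j)\in\Psi$ with $v_{ij}=\infty$ is a small point of care that the paper leaves implicit, but it does not change the substance of the argument.
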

\begin{proof}
  Here the first equality is obtained by reordering the intersections and unions in the Definition
  \eqref{eq:section_complex}.  For the second equality notice that $\cG_\Psi \subseteq \cH_\Psi$. Since for every graph
  $H \in \cH_\Psi$ there is a graph $G \in \cG_\Psi$ so that $X_H(V) \subseteq X_G(V)$ the claim follows.
\end{proof}
The preceding proposition says that a cell $X_G(V)=\bigcap_{(i,j)\in G} S_i(v^{(j)})$ in the covector decomposition
$\typedecomp{V}$ of $\RR^d$ with covector graph $G \subseteq [d]\times[n]$ is contained in the $\max$-tropical cone
$\thalf(V,\Psi)$ if and only if no node in $[n]$ is isolated in the intersection of $G$ and $\Psi$.  Moreover,
$\thalf(V,\Psi)$ is a union of cells. In this way the Proposition~\ref{prop:graphs_types_cells} can be seen as some kind
of a dual version of \cite[Theorem~15]{DevelinSturmfels:2004}, which is a key structural result in tropical convexity.
\begin{corollary}
  The covector decomposition of $\thalf(V,\Psi)$ induced by the columns of $V$ is dual to a subcomplex of the regular
  subdivision of $\Delta_{d-1}\times\Delta_{n-1}$ with weights given by $V$.
\end{corollary}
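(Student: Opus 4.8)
The plan is to realize $\thalf(V,\Psi)$ as a subcomplex of the covector decomposition $\typedecomp{V}$ and then to carry this subcomplex across the duality of Corollary~\ref{coro:dual_sub_tcone}. For the first step, note that by Proposition~\ref{prop:graphs_types_cells} and the remark following it, a cell $C$ of $\typedecomp{V}$ is contained in $\thalf(V,\Psi)$ if and only if no node of $[n]$ is isolated in $\typegraph{C}\cap\Psi$, and $\thalf(V,\Psi)$ is exactly the union of those cells. This membership condition is monotone along the face relation of $\typedecomp{V}$: if $C'$ is a face of $C$ then $C'\subseteq C\subseteq S_i(v^{(j)})$ for every arc $(i,j)$ of $\typegraph{C}$, so $\typegraph{C'}\supseteq\typegraph{C}$ by Remark~\ref{rem:type_relations}, whence $\typegraph{C'}\cap\Psi\supseteq\typegraph{C}\cap\Psi$ and $C'$ again lies in $\thalf(V,\Psi)$. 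Hence the cells contained in $\thalf(V,\Psi)$ are closed under passing to faces and form a polyhedral complex, which is by definition the covector decomposition of $\thalf(V,\Psi)$ induced by the columns of $V$.

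For the second step I would invoke the Structure Theorem: by Corollary~\ref{coro:dual_sub_tcone}, which rests on Theorem~\ref{thm:regular_subdivison} and Theorem~\ref{thm:envelope}, the full complex $\typedecomp{V}$ is dual to the regular subdivision $\Sigma$ of $\conv\{\pair{e_i}{e_j}:(i,j)\in\Bgraph(V)\}$ with weights $V$, and by the remark after Theorem~\ref{thm:regular_subdivison} this subdivision extends to one of the whole product $\Delta_{d-1}\times\Delta_{n-1}$. Restricting this duality to the subcomplex produced in the first step identifies the covector decomposition of $\thalf(V,\Psi)$ with a corresponding family of faces of $\Sigma$; that family is the subcomplex in the statement, and the analogous argument restricted to interior cells gives the corresponding statement inside $\tcone(V)$.

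The point that requires care---and the main obstacle---is exactly this last identification, because the duality underlying Corollary~\ref{coro:dual_sub_tcone} (the polar duality of Theorem~\ref{thm:regular_subdivison} composed with the piecewise linear isomorphism of Theorem~\ref{thm:envelope}) reverses inclusions. A down-closed family of cells of $\typedecomp{V}$ is therefore carried to an up-closed family of faces of $\Sigma$, which is not literally a subcomplex. The clean way to settle this is to run the monotonicity argument of the first step in the opposite direction: the cells of $\typedecomp{V}$ \emph{not} contained in $\thalf(V,\Psi)$ are closed under taking cofaces, so their dual faces form an honest subcomplex $\Sigma'$ of $\Sigma$, and the covector decomposition of $\thalf(V,\Psi)$ is dual to the complementary order filter $\Sigma\setminus\Sigma'$; this is the subcomplex, read in the appropriate dual sense, that the statement refers to. Everything else is a direct reading of Proposition~\ref{prop:graphs_types_cells} and Corollary~\ref{coro:dual_sub_tcone}.
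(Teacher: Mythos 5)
Your argument is correct and is essentially the paper's own: the corollary is stated there without proof, as an immediate consequence of Proposition~\ref{prop:graphs_types_cells} (which exhibits $\thalf(V,\Psi)$ as a union of covector cells, a family closed under passing to faces exactly by the monotonicity you derive from Remark~\ref{rem:type_relations}) combined with the duality of Corollary~\ref{coro:dual_sub_tcone}. Your closing observation about the order reversal is also well taken---the family of dual faces is literally an order filter rather than a subcomplex, which is why the paper itself writes ``dual to the \emph{poset} of interior cells'' in Corollary~\ref{coro:dual_sub_tcone}---so your reading of ``subcomplex'' in the dual sense, equivalently as the complement of the honest subcomplex dual to the cells outside $\thalf(V,\Psi)$, is the right way to make the statement precise rather than a flaw in your proof.
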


\begin{example}
  The apices $\transpose{(0,1,1)}$ and $\transpose{(0,2,1)}$ induce the cell decomposition depicted in
  Figure~\ref{figure:not_max_cell}. Every node in the bottom shore in the graph $G$ to the right has degree $1$. Hence,
  it is the kind of graph contained in $\cG_{\Psi}$ for some appropriate $\Psi$ (for example $G$
  itself). However, the corresponding cell is not full-dimensional since the apices are not in general position. Indeed,
  the covector graph of this cell is obtained from $G$ by adding the arcs $(3,1)$ and $(1,2)$.
\end{example}

\begin{figure}[ht]
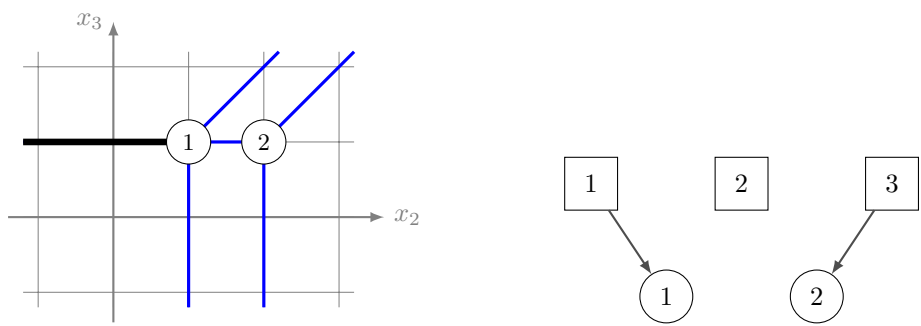

  \centering
  \inclpic{ex_not_max_cell1}
\hspace*{4em}
  \inclpic{ex_not_max_cell2}
  \caption{The left figure shows the cell decomposition induced by two apices which are not in general position. The
    right figure depicts a graph $G$ corresponding to the black marked cell $X_G$ on the left}
  \label{figure:not_max_cell}
\end{figure}

\begin{remark}\label{rem:tangent_digraph}
  The \emph{tangent digraph}, defined in \cite[\S3.1]{ABGJ-Simplex:A}, describes the local combinatorics at a cell $C$
  of $\thalf(V,\Psi)$.  This is related to the above as follows. Deleting all nodes in $[n]$ (and incident arcs) for
  which all incident arcs are contained in $\Psi$ in the covector graph $\typegraph{C}$ and forgetting about the orientation
  yields the \emph{tangent graph} $\tangentgraph(C)$ of \cite[\S3.1]{ABGJ-Simplex:A}.  By taking the orientation into
  account and reversing every arc in $\tangentgraph(C)$ which is not in $\tangentgraph(C) \cap \Psi$ from the bottom
  shore $[n]$ (corresponding to the hyperplane apices) to the top shore $[d]$ (corresponding to the coordinate
  directions) we obtain the tangent digraph.
\end{remark}

Proposition \ref{prop:graphs_types_cells} implies that the $\max$-tropical cone $\thalf(V,\Psi)$ is compatible with the
covector decomposition of $\RR^d$ induced by $V$.  Thus it makes sense to talk about the \emph{covector decomposition} of a
$\max$-tropical cone with respect to a fixed system of defining tropical half-spaces.  This is the polyhedral
decomposition formed by the cells which happen to lie in the tropical cone.  A tropical cone is \emph{pure} if each cell
in its covector decomposition which is maximal with respect to inclusion shares the same dimension.  While the covector
decomposition does depend on the choice of the defining inequalities, pureness does not.

The \emph{tropical determinant} of a square matrix $W=(w_{ij})\in\TT_{\min}^{k{\times}k}$ is 
\begin{equation}\label{eq:tdet}
  \begin{aligned}
    \tdet W \ &= \ \bigoplus_{\sigma\in\Sym(k)} \bigodot_{i\in[k]} w_{i,\sigma(i)} \\
    &= \ \min_{\sigma\in\Sym(k)} (w_{1,\sigma(1)}+w_{2,\sigma(2)}+\dots+w_{k,\sigma(k)}) \enspace ,
  \end{aligned}
\end{equation}
which is the same as the solution to a minimum weight bipartite matching problem in the complete bipartite graph
$[k]\times[k]$.  The tropical determinant \emph{vanishes} if the minimum in \eqref{eq:tdet} equals $\infty$ or if it is
attained at least twice. In \cite[\S6.2.1]{Butkovic:10} a square matrix whose tropical determinant does not vanish is
called `strongly regular'. A not necessarily square matrix is \emph{tropically generic} if the tropical determinant of
no square submatrix vanishes.  A finite set of points is in \emph{tropically general position} if 
any matrix whose columns (or rows) represent those points is tropically generic.  Develin and Yu conjectured that
a tropical cone is pure and full-dimensional if and only if it has a half-space description in which the apices of these
half-spaces are in general position \cite[Conjecture~2.11]{DevelinYu:2007}.  The next result confirms one of the two
implications.
\begin{theorem}\label{thm:pure}
  Let $V$ and $\Psi$ be as before. If $V$ is tropically generic with respect to the tropical semiring $\TT_{\min}$ then
  the $\max$-tropical cone $\thalf(V,\Psi)$ is pure and full-dimensional.
\end{theorem}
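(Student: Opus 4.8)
The plan is to show that every inclusion-maximal cell of the covector decomposition of $\thalf(V,\Psi)$ is full-dimensional, i.e.\ of dimension $d$ in $\RR^d$ (equivalently dimension $1$ in the tropical projective torus). By Proposition~\ref{prop:graphs_types_cells} such a cell has the form $X_H(V)$ for some $H\in\cH_\Psi$, and by enlarging $H$ to a covector graph we may assume $H=\typegraph{C}$ is the covector graph of that cell $C$. By Corollary~\ref{coro:dim_type} the dimension of $X_H(V)$ equals the number of weak components of $H$, so full-dimensionality is the assertion that $H$ has exactly $d$ weak components; since $H$ is a subgraph of the bipartite graph $[d]\times[n]$ in which every node of $[n]$ has positive degree (because $H\in\cH_\Psi$ forces no node of $[n]$ isolated once we pass to a covector graph refining it), each weak component contains at least one node of $[d]$, hence there are at most $d$ components, and we must rule out a component containing two or more nodes of $[d]$.

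\textbf{Key steps.} First I would observe that, by Proposition~\ref{prop:char_type_graph}\ref{item:cycles}, two nodes $i,\ell\in[d]$ lie in the same weak component of the covector graph $H$ precisely when they lie on a common cycle of weight zero in $\Gamma(W\#H)$; by the bipartite structure (Lemma~\ref{lemma: projection envelope}) such a cycle projects to a closed walk in the digraph $\Gamma(V\odot V[H])$ of weight zero, which unwinds into a sum of two-arc paths $v_{i_0 j_1}-v_{i_1 j_1}+v_{i_1 j_2}-\dots$ summing to zero. Collecting the arcs used, this produces two distinct perfect matchings $A$ and $B$ in the complete bipartite graph on some $D\dcup N$ with $|D|=|N|$ and $\sum_{(i,j)\in A}v_{ij}=\sum_{(i,j)\in B}v_{ij}$, i.e.\ the minimum in the tropical determinant $\tdet$ of the corresponding square submatrix of $V$ is attained at least twice. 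That contradicts $V$ being tropically generic. Hence no weak component of $H$ contains two nodes of $[d]$, so $H$ has exactly $d$ weak components and $X_H(V)$ is full-dimensional. Since every maximal cell is full-dimensional, their union $\thalf(V,\Psi)$ is full-dimensional, and pureness is immediate: all inclusion-maximal cells share dimension~$d$.

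\textbf{Main obstacle.} The delicate point is the bookkeeping that turns a weight-zero cycle in $\Gamma(W\#H)$ into a genuine coincidence of two distinct perfect matchings of the same square submatrix, rather than merely a non-simple closed walk (which might reuse nodes and thus not immediately give matchings on a common index set $D\dcup N$ of the right cardinality). One must argue that a \emph{shortest} such cycle, or a minimal counterexample, can be taken simple, so that its forward arcs $[d]\to[n]$ and its reversed arcs $[n]\to[d]$ each form a matching on exactly the node set traversed; then $A\neq B$ because the cycle has length at least $4$ and uses each arc once, while the weight-zero condition together with the defining sign flip $w_{ji}\mapsto -w_{ij}$ on arcs of $H$ gives the equality of matching weights. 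Granting the simple-cycle reduction, the rest is the direct application of Corollary~\ref{coro:dim_type} and the definition of tropical genericity; the argument is essentially a one-step translation once Proposition~\ref{prop:char_type_graph} is in hand.
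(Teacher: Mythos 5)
Your overall strategy coincides with the paper's: reduce full-dimensionality to the absence of a zero-weight cycle joining two top nodes, via Proposition~\ref{prop:char_type_graph} and Corollary~\ref{coro:dim_type}, and convert such a cycle into two distinct minimal perfect matchings of a square submatrix of $V$, contradicting tropical genericity. However, there is a genuine gap at exactly the point you flag as the ``main obstacle'', and your proposed fix does not work. You apply the cycle argument to the covector graph $H=\typegraph{C}$ of a maximal cell. A covector graph may have bottom nodes of degree at least $2$, and a single such node $j$ with neighbors $i\neq\ell$ already merges $i$ and $\ell$ into one weak component by way of the non-simple closed walk $i\to j\to \ell\to j\to i$ of weight zero; its forward arcs both use $j$, so no perfect matchings of a square submatrix arise. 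This cannot be repaired by passing to a shortest cycle or a minimal counterexample: for $d=2$, $n=1$ and $V=\transpose{(0,0)}$ the matrix is tropically generic (the only square minors are $1\times1$), yet the apex cell has covector graph $K_{2,1}$ with both top nodes in one weak component, and every zero-weight closed walk through both of them is forced to reuse the unique bottom node. Since your argument never actually invokes the maximality of $C$, it would prove that every cell of the covector decomposition is full-dimensional, which is false.

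The paper avoids this by running the argument on the graphs $G\in\cG_\Psi$ from Proposition~\ref{prop:graphs_types_cells} rather than on covector graphs: there every bottom node has degree exactly one, hence a unique outgoing arc in $\Gamma(W\#G)$, so a zero-weight cycle through two distinct top nodes can indeed be taken simple, and its forward and backward arcs form two distinct perfect matchings of equal, minimal weight on a common index set $D\dcup N$. One then shows that each nonempty $X_G(V)$ with $G\in\cG_\Psi$ is full-dimensional; since $\thalf(V,\Psi)=\bigcup_{G\in\cG_\Psi}X_G(V)$ and each $X_G(V)$ is a convex union of covector cells, every inclusion-maximal cell of the covector decomposition of $\thalf(V,\Psi)$ is full-dimensional, which gives both pureness and full-dimensionality. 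Your proof becomes correct if you replace ``enlarge $H$ to a covector graph'' by ``shrink $H$ to a graph in $\cG_\Psi$'' and carry out the matching argument there.
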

\begin{proof}
  As in Proposition \ref{prop:graphs_types_cells} we consider the graph class $\cG_\Psi$. If we can show that each
  ordinary polyhedron $X_G(V) = \bigcap_{(i,j) \in G} S_{i}(v^{(j)})$ for $G \in \cG_\Psi$ is either full-dimensional or
  empty then the claim follows.  Proposition \ref{prop:projection_face_sector} implies that $X_G(V)$ is the projection
  of the weighted digraph polyhedron $Q(W\#G)$, which is a face of $\envelope(V)=Q(W)$. Assume that $Q(W\#G)$ is
  feasible.  We have to show that $X_G(V)$ is full-dimensional, i.e., it suffices to show that $\dim Q(W\#G)=d$.

  In view of Proposition~\ref{prop:char_type_graph} together with Corollary~\ref{coro:dim_type} this will follow if we
  can show that no two nodes in $[n]$ are contained in a cycle of weight zero in $\Gamma(W\#G)$. Aiming at an indirect
  argument we suppose that such a cycle exists. Let $D \dcup N$ be the vertex set of the zero cycle
  $(d_1,n_1,d_2,n_2,\dots,d_1)$.  We have $|D| = |N|$. Then the arcs $(d_1,n_1),(d_2,n_2),\dots$ form a perfect matching
  $\cM$ in $D\times N$ whose weight $\sum_i v_{d_i,n_i}$ is minimal by Proposition \ref{prop:char_type_graph}. The
  complementary arcs $(n_1,d_2),(n_2,d_3),\dots$ of the cycle yield a second matching whose weight is the same as the
  weight of $\cM$ since the total weight of the cycle is zero.  This entails that the minimum
  \[
  \min_{\sigma} \sum_{i \in D} v_{i \sigma(i)}  \enspace ,
  \]
  where $\sigma$ ranges over all bijections from $D$ to $N$, is attained at least twice for the submatrix of $V$ indexed
  by $D\times N$.  Hence, the apices are not in general position, and this is the desired contradiction.
\end{proof}
Since the matrix $V$ is tropically generic it is immediate that $\tcone(V)$ has at least one full-dimensional cell;
e.g., see \cite[Theorem 6.2.18]{Butkovic:10} or \cite[Proposition~24]{DevelinSturmfels:2004}.  Yet, in general
$\tcone(V)$ is not pure; see Example~\ref{ex:type_decomposition}.
The following shows that the reverse direction of Theorem~\ref{thm:pure} does not hold.
\begin{example}\label{exmp:pure}
  For
  \[
  V \ = \ \begin{pmatrix} 0 & 0 & 0 & 0 & 0 \\ 3 & 2 & 1 & \infty & \infty \\ 2 & 2 & \infty & 1 & 3 \end{pmatrix}
  \]
  and $\Psi$ as in Figure~\ref{figure:feasible_graph} we are interested in the $\max$-tropical cone $C=\thalf(V,\Psi)$.
  Now $C$ is pure, but the first two columns, $\transpose{(0,3,2)}$ and $\transpose{(0,2,2)}$, of the matrix $V$ are not
  in general position with respect to $\min$.  Notice that each one of the apices of the three remaining tropical
  half-spaces can be moved without changing the feasible set $C$.  However, the first two tropical half-spaces are
  \emph{essential} in the sense that they occur in any exterior description of~$C$.
\end{example}

\begin{figure}[ht]
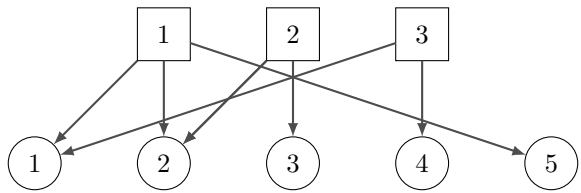

  \centering
  \inclpic{ex_feasible_graph}
  \caption{The graph $\Psi$ for the $\max$-tropical cone $C=\thalf(V,\Psi)$ from Example~\ref{exmp:pure} and Figure~\ref{figure:pure_counterexample}}
  \label{figure:feasible_graph}
\end{figure}

\begin{figure}[ht]
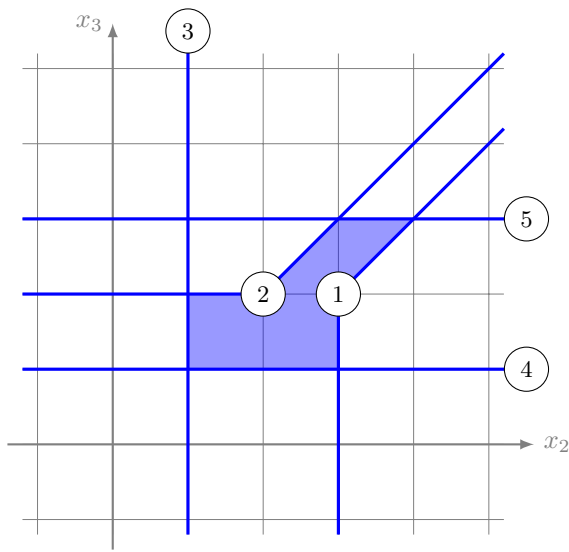

  \centering
  \inclpic{ex_pure_counter}
  \caption{The pure $\max$-tropical cone $C$ from Example~\ref{exmp:pure}.  The apices of any $\max$-tropical half-space
    description are not in general position with respect to $\min$}
  \label{figure:pure_counterexample}
\end{figure}

A related conjecture from the same paper \cite[Conjecture~2.10]{DevelinYu:2007} was recently resolved by Allamigeon and
Katz \cite{AllamigeonKatz:1408.6176}.

\subsection{Polytropes} \label{sec:polytropes}

A \emph{polytrope} is a tropical cone $P=\tcone(V)$ for $V\in\RR^{d\times n}$, i.e., with a generating matrix with
finite coefficients, which is also convex in the ordinary sense.  In that case $d$ generators suffice
\cite[Proposition~18]{DevelinSturmfels:2004} and \cite[Theorem~7]{JoswigKulas:2010}. Therefore we may assume that
$n=d$. From this we obtain $\tcone(V)=Q(V)=Q(V^*)$ in view of Remark~\ref{rem:wdp_min_cone}, and thus any polytrope is a
weighted digraph polyhedron; see also \cite[Proposition~10]{JoswigKulas:2010}.
Yet another argument for the same goes through Theorem~\ref{thm:envelope} and Lemma~\ref{lemma: face of wdp}.
This is slightly more general as it takes $\infty$ coefficients into account.
Moreover, the covector decomposition of $P$ induced by the square matrix $V$ has a single cell.
Its projection to the tropical projective torus $\RR^d/\RR\1$ is bounded, namely the polytrope $P$ itself.
The latter also gives a max-tropical exterior description.
The polytropes are exactly the `alcoved polytopes of type A' of Lam and Postnikov \cite{LamPostnikov:2007}.
The weighted digraph polyhedra form the natural generalization to polyhedra which are not necessarily bounded.
We sum up our discussion in the following statement.

\begin{proposition}
  Let $V\in\TT_{\min}^{d\times n}$ such that the $\min$-tropical cone $\tcone(V)$ is also convex in the ordinary
  sense. Then there is a $d{\times}d$-matrix $U$ such that $\tcone(V)=Q(U)$ is a weighted digraph polyhedron.
\end{proposition}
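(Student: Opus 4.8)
The statement collects the discussion immediately preceding it, and the plan is to turn the two arguments sketched there into a proof. The first, shorter route treats the case of a matrix $V$ with finite coefficients by appealing to the theory of polytropes, while the second route runs through the envelope and also accommodates $\infty$ entries.

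\emph{Finite case.} If all coefficients of $V$ are finite, then $\tcone(V)$ is a polytrope, hence by \cite[Proposition~18]{DevelinSturmfels:2004} and \cite[Theorem~7]{JoswigKulas:2010} it is already generated by some $d$ of its columns; I would take these to be the tropical vertices and collect them, suitably scaled modulo $\RR\1$, into a $d{\times}d$-matrix $U$. The key point is that $U$ then equals its own Kleene star, $U=U^*$, which is \cite[Proposition~10]{JoswigKulas:2010}. Remark~\ref{rem:wdp_min_cone} now gives $Q(U)=Q(U^*)=\tcone(U^*)=\tcone(U)=\tcone(V)$, as desired. The effort in this route goes into verifying that the tropical vertex matrix is closed under shortest paths — equivalently, that replacing a minimal generating matrix by its Kleene star does not enlarge the $\min$-tropical span — since this is exactly where ordinary convexity enters and it is false for an arbitrary square matrix.

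\emph{General case via the envelope.} By Theorem~\ref{thm:envelope} every cell of the covector decomposition $\typedecomp{V}$ of $\RR^d$ is the orthogonal projection of a face $F_G(W)$ of $\envelope(V)$, and by Lemma~\ref{lemma: projection envelope} this projection is the weighted digraph polyhedron $Q(V\odot V[G])$ of a $d{\times}d$-matrix. I would pick a point $z$ in the relative interior of $\tcone(V)$, let $G$ be the covector graph of the inclusion-minimal cell $C$ of $\typedecomp{V}$ containing $z$ (so that $C=X_G(V)=Q(V\odot V[G])\subseteq\tcone(V)$), and then prove that $C=\tcone(V)$, i.e.\ that the covector decomposition of the ordinarily convex set $\tcone(V)$ consists of a single cell. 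This is the main obstacle. The approach would be to use the duality of Corollary~\ref{coro:dual_sub_tcone}: the covector decomposition of $\tcone(V)$ is dual to the poset of interior cells of the regular subdivision of the subpolytope of $\Delta_{d-1}\times\Delta_{n-1}$ corresponding to $\Bgraph(V)$, and to argue that if this interior subdivision had more than one maximal cell, then $\tcone(V)$ would be cut by a wall separating two adjacent full-dimensional covector cells — impossible, because those cells are weighted digraph polyhedra (with facet normals $e_i-e_j$) and $\tcone(V)$ is convex in the ordinary sense, so no such wall can separate two of its points. Once the single-cell claim is established, $U:=V\odot V[G]$ satisfies $\tcone(V)=Q(U)$ and the proof is complete; this also explains why the two routes agree.
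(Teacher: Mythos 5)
Your two-route structure is exactly the paper's: the proposition is proved there by the discussion immediately preceding it, namely the polytrope argument (reduce to $n=d$ via \cite[Proposition~18]{DevelinSturmfels:2004} and \cite[Theorem~7]{JoswigKulas:2010}, then $\tcone(V)=Q(V)=Q(V^*)$ by Remark~\ref{rem:wdp_min_cone}), together with the remark that a second argument ``goes through Theorem~\ref{thm:envelope} and Lemma~\ref{lemma: face of wdp}'' and accommodates $\infty$ entries. Your finite case reproduces the paper's argument, and you correctly isolate $U=U^*$ as the point where ordinary convexity enters.

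There is, however, a genuine gap in your completion of the general case. The claim that the covector decomposition of an ordinarily convex $\tcone(V)$ consists of a single cell is false as long as $V$ may contain redundant columns: for $d=2$ and $V=\left(\begin{smallmatrix}0&0&0\\0&1&2\end{smallmatrix}\right)$ the cone $\tcone(V)$ is the segment $[0,2]$ in $\RR^2/\RR\1\cong\RR$, which is convex, yet the middle generator subdivides it into the two maximal cells $[0,1]$ and $[1,2]$; the inclusion-minimal cell $C$ containing an interior point $z$ is then a proper subset of $\tcone(V)$, so your conclusion $C=\tcone(V)$ fails. The reason the ``wall'' argument does not work is that a wall of a polyhedral subdivision does not separate points of the union in any sense that conflicts with convexity --- a convex set can perfectly well be subdivided by interior walls into several full-dimensional convex cells. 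What actually makes the single-cell statement true is the prior passage to an irredundant generating set of at most $d$ columns, i.e., precisely the polytrope result you invoke only in the finite case; for matrices with $\infty$ entries that reduction (or a substitute for it, which is what the appeal to Theorem~\ref{thm:envelope} and Lemma~\ref{lemma: face of wdp} is meant to supply) is the missing ingredient, and without it the envelope route does not close.
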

% \begin{proof}
%   The tropical cone $\tcone(V)$ is the union of the weighted digraph polyhedra which form its covector decomposition. Each
%   inequality of all these cells is of the form $x_i - x_j \leq w$ for $i,j \in [d]$ and some $w \in \TT_{\min}$. So, if the
%   union of these polyhedra $\tcone(V)$ is convex then by \cite[Theorem~3]{Fukuda:01} its inequalities have the same form
%   as the inequalities of the cells. Therefore $\tcone(V)$ is a weighted digraph polyhedron and the claim follows with
%   Remark~\ref{rem:wdp_min_cone}.
% \end{proof}

In the context of proving a hardness result on the vertex-enumeration of polyhedra given in terms of inequalities
Khachiyan and al.\ \cite{Khachiyan:2008} study the \emph{circulation polytope} of the digraph~$\Gamma$, which is the
set of all points $u\in\RR^{\Gamma}$ satisfying
\begin{align*}
  \sum_{j: (i,j) \in \Gamma}u_{ij} - \sum_{\ell: (\ell, i) \in \Gamma}u_{\ell i} \ &= \ 0 \quad  \text{ for all } i \in [k] \\
  \sum_{(i,j) \in \Gamma} u_{ij} \ &= \ 1  \\
  0 \ &\leq \ u_{ij} \quad \text{ for all } (i,j) \in \Gamma \enspace .
\end{align*}
The support set $\smallSetOf{(i,j) \in \Gamma}{u_{ij} \neq 0}$ of a vertex of the circulation polytope defines a cycle in $\Gamma$.  Hence, by
Lemma~\ref{lemma:feasible}, minimizing the weight function $\gamma(W)$ over the circulation polytope yields a
certificate for the feasibility of $Q(W)$.  Tran uses this approach to characterize the feasibility of polytropes in
terms of ordinary inequalities \cite[\S3]{Tran:2013}.

\subsection{Covector decompositions of tropical projective spaces} \label{sec:projective}

The \emph{tropical projective space} $\TPmin^{d-1}$ is defined as the quotient of
$\TT_{\min}^d\setminus\{\transpose{(\infty,\infty,\dots,\infty)}\}$ modulo $\RR\1$.  That is, its points are equivalence
classes of vectors with coefficients in $\TT_{\min}=\RR\cup\{\infty\}$ with at least one finite entry, up to differences
by a real constant; see \cite[Example~3.10]{Mikhalkin:2006}.  The tropical projective space $\TPmin^{d-1}$ is a
natural compactification of the tropical projective torus $\RR^d/\RR\1$.
It is easy to see that the pair $(\TPmin^{d-1},\RR^d/\RR\1)$ is homeomorphic to the pair of a $(d{-}1)$-simplex and its interior.

We assume that $V\in\TT_{\min}^{d\times n}$ has no column identically $\infty$. Then $V$ gives rise to a configuration of $n$
labeled points in $\TPmin^{d-1}$.  The covector decomposition $\typedecomp{V}$ of $\RR^d$ does not change if we add a
real constant to the entries in any column.  So it is an invariant of that point configuration, and, moreover,
$\typedecomp{V}$ induces a covector decomposition of the tropical projective torus $\RR^d/\RR\1$.  Yet it makes sense to
study tropical convexity and tropical cones also within the compactification $\TPmin^{d-1}$.   Our goal is to describe a
decomposition of the tropical projective space into cells.  Let $Z$ be a proper subset of $[d]$.  We consider the matrix
obtained by removing from $V$ all columns $j$ for which there is an $i \in Z$ with
$v_{ij}\not=\infty$.  Each row of the resulting matrix with a label in $Z$ has only $\infty$ as coefficients.
Removing these rows yields yet another matrix, which we denote as $V(Z)$.  Now this matrix induces a covector
decomposition of the \emph{boundary stratum}
\[
\TPmin^{d-1}(Z) \ = \ \SetOf{(p_1,p_2,\dots,p_d)\in\TPmin^{d-1}}{p_i=\infty \text{ if and only if } i\in Z} \enspace,
\]
which is a copy of the tropical projective torus of dimension $d-1-|Z|$.  In particular, we have
$\TPmin^{d-1}(\emptyset)=\RR^d/\RR\1$.  Notice that for the induced covector decomposition we keep the original labels
of the columns and the rows. 

For $K \subseteq [d]$ let $b^{(K)}$ be the vector in $\TT_{\min}^d$ with  
\[
b_i^{(K)} \ = \ 
\begin{cases}
  0 & \text{ for } i \in [d] \setminus K \\
  \infty & \text{ for } i \in K 
\end{cases} \enspace .
\]
%That is, $b^{(K)}$ is the $0/\infty$-vector of length $d$ whose support equals $[d]\setminus K$.

Consider $u \in \TT_{\min}^d$ and let $\supp(u)=\smallSetOf{i\in[d]}{u_i\neq\infty}$ be the support of $u$. Then the recession cone of the weighted digraph polyhedron $S_i(u)$ is given by the graph on $[d]$ where the nodes in $[d] \setminus \supp(u)$ are isolated and there are arcs from the nodes in $\supp(u) \setminus \{ i \}$ to~$i$, see Equation~\eqref{eq:sector}.
The supports of the rays of $S_i(u)$ are given by the sets in 
\[
\cK \ = \ \SetOf{A \cup B}{A \in \cA, B \in \cB} 
\]
where
\[
\cA = \emptyset \cup \SetOf{M \cup \{i\}}{M \subseteq \supp(u)} \quad \text{and} \quad \cB = \bigl\{M \subseteq [d]\setminus \supp(u)\bigr\} \enspace .
\]
Here, the sets in $\cA$ correspond to the faces of the pointed part of the recession cone of $S_i(u)$ described by Theorem~\ref{thm:partition}.
The set $\cB$ encodes rays arising from the lineality space of $S_i(u)$ which was characterized in Lemma~\ref{lemma:recession_cone}.

So, it is natural to define
\[
 \csec{i}{u} \ = \ S_i(u) \cup \bigcup_{K \in \cK} \left(b^{(K)} + S_i(u)\right) \enspace
\]
where the `$+$'-operator denotes elementwise ordinary addition of $b^{(K)}$ and the set $S_i(u)$.

In the following we will frequently identify subsets of $(\RR\cup\{\infty\})^d$ with their images modulo $\RR/\1$.  In particular, we will typically view $\csec{i}{u}$ with $u_i\neq\infty$ as a subset of $\TPmin^{d-1}$.

\begin{lemma}\label{lem:sectors_boundary}
  The set $\csec{i}{u}$ for $u_i\neq\infty$ is the compactification of the sector $S_i(u)$ in $\TPmin^{d-1}$.
\end{lemma}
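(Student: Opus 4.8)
The plan is to verify the two inclusions between $\csec{i}{u}$ and the closure $\operatorname{cl}(S_i(u))$ of the sector $S_i(u)\subseteq\RR^d/\RR\1=\TPmin^{d-1}(\emptyset)$ taken inside the compact space $\TPmin^{d-1}$. The one preliminary I would record is a comparison of the definition of $\cK$ with the description \eqref{eq:sector}: the recession cone of $S_i(u)$ is the digraph cone on $[d]$ with an arc $\ell\to i$ for each $\ell\in\supp(u)\setminus\{i\}$, so $\chi(K)$ lies in it precisely when $\chi(K)_\ell\leq\chi(K)_i$ for all $\ell\in\supp(u)$, i.e.\ when $i\in K$ or $K\cap\supp(u)=\emptyset$; and this last condition is exactly membership of $K$ in $\cK$ (write $K=(K\cap\supp(u))\cup(K\setminus\supp(u))$ and match the two pieces against $\cA$ and $\cB$). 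Thus $K\in\cK\iff\chi(K)\in\operatorname{rec}S_i(u)$.

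For $\csec{i}{u}\subseteq\operatorname{cl}(S_i(u))$ I would fix $K\in\cK$ with $K\neq[d]$ (when $K=[d]$ the set $b^{(K)}+S_i(u)$ is the all-$\infty$ point, hence empty as a subset of $\TPmin^{d-1}$) and any $z\in S_i(u)$. Since $\chi(K)$ is a recession direction, $z+\lambda\chi(K)\in S_i(u)$ for all $\lambda\geq0$; as $\lambda\to\infty$ the coordinates in $K$ run to $+\infty$ while those outside $K$ stay fixed, so $[z+\lambda\chi(K)]$ converges in $\TPmin^{d-1}$ to the point with value $\infty$ on $K$ and $\pi_K(z)$ off $K$, which is $b^{(K)}+z$. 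Hence $b^{(K)}+z\in\operatorname{cl}(S_i(u))$ for every such $z$ and $K$, and since $S_i(u)\subseteq\operatorname{cl}(S_i(u))$ trivially, $\csec{i}{u}\subseteq\operatorname{cl}(S_i(u))$.

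For the reverse inclusion I would take $p\in\operatorname{cl}(S_i(u))$ and a sequence $z^{(m)}\in S_i(u)$ with $[z^{(m)}]\to p$. If $p$ lies in $\RR^d/\RR\1$ it lies in $S_i(u)$, since $S_i(u)$ is closed there; so assume $p$ lies in the boundary stratum $\TPmin^{d-1}(K)$ with $\emptyset\neq K\subsetneq[d]$. First I would show $K\in\cK$: for $\ell\in\supp(u)$ the scaling-invariant difference $z^{(m)}_\ell-z^{(m)}_i$ is bounded above by $u_\ell-u_i<\infty$ by the defining inequality of $S_i(u)$, whereas if $\ell\in K$ and $i\notin K$ convergence to the stratum forces it to $+\infty$; hence $K\cap\supp(u)\neq\emptyset$ implies $i\in K$, i.e.\ $K\in\cK$. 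Finally, convergence $[z^{(m)}]\to p$ restricted to the coordinates outside $K$ says exactly that $\pi_K(z^{(m)})\to\pi_K(p)$ in $\RR^{[d]\setminus K}/\RR\1$; since $\pi_K(S_i(u))$ is again a polyhedron (a weighted digraph polyhedron, by Lemma~\ref{lemma:projection_wdp}) and hence closed, $\pi_K(p)\in\pi_K(S_i(u))$, so there is $z^\ast\in S_i(u)$ with $\pi_K(z^\ast)=\pi_K(p)$, giving $p=b^{(K)}+z^\ast\in b^{(K)}+S_i(u)\subseteq\csec{i}{u}$.

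The step I expect to be the crux is the one just sketched, namely that every boundary stratum $\TPmin^{d-1}(K)$ touched by $\operatorname{cl}(S_i(u))$ has index set $K$ in $\cK$: this is where the combinatorial description of $\cK$ (via the recession cone) is genuinely used, and it is also where one must be careful about how convergence in $\TPmin^{d-1}$ interacts with the choice of representatives, so that no coordinate spuriously escapes to $-\infty$. The other ingredients — that $S_i(u)$ and its coordinate projections are closed polyhedra, and the elementary limit $z+\lambda\chi(K)\to b^{(K)}+z$ — I expect to be routine.
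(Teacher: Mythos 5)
The paper states this lemma without proof, treating it as a consequence of the preceding identification of $\cK$ with the supports of the unbounded directions of $S_i(u)$; your argument supplies exactly the missing details along that intended route (matching $\cK$ with the recession cone via \eqref{eq:sector}, then verifying both inclusions against the topological closure in $\TPmin^{d-1}$), and it is correct. The one point worth keeping explicit, as you do, is the exclusion of $K=[d]$ and the scaling-invariant bound $z_\ell-z_i\leq u_\ell-u_i$ that rules out boundary strata with $K\cap\supp(u)\neq\emptyset$ but $i\notin K$.
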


Consider a cell $X_G$ in $\typedecomp{V}$ which contains a ray with support $Z$. Let $M$ be the index set of the columns of $V$ with $v_{ij} = \infty$ for all $i \in Z$ and $j \in M$. Construct the submatrix $Y$ of $V$ indexed by $([d] \setminus Z) \times M$ and the graph $H$ as the restriction of $G$ to the node set $([d] \setminus Z) \dcup M$. 
\begin{lemma}\label{lemma:cell_covector_boundary}
The cell decomposition of $\TPmin(Z)^{d-1}$ induced by $Y$ contains the cell $X_G(V) + b^{(Z)}$ which is given by the covector graph $H$ in the decomposition of $\RR^{([d] \setminus Z)}$ by $Y$.
Furthermore, we obtain the alternative description
\[
  b^{(Z)} + X_G(V) \ = \ b^{(Z)} + \bigcap_{(i,j)\in G} \csec{i}{v^{(j)}} \enspace .
\]
\end{lemma}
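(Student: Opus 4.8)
\emph{Overview.} The plan is to reduce both assertions to the identification of cells of a covector decomposition with orthogonal projections of faces of the envelope (Proposition~\ref{prop:projection_face_sector}) and to the behaviour of such projections under forgetting coordinates.

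\emph{The first assertion.} Passing from $X_G(V)\subseteq\RR^d$ to $b^{(Z)}+X_G(V)$, read inside $\TPmin^{d-1}(Z)\cong\RR^{[d]\setminus Z}/\RR\1$, simply forgets the coordinates indexed by $Z$, so it amounts to computing the projection $\pi_Z\bigl(X_G(V)\bigr)$; I would do this sector by sector. For an arc $(i,j)\in G$ with $i\in Z$ one gets $\pi_Z\bigl(S_i(v^{(j)})\bigr)=\RR^{[d]\setminus Z}$, because forgetting the ``centre'' coordinate $i$ turns every defining inequality of the sector into a vacuous lower bound. For an arc $(i,j)\in G$ with $i\notin Z$ the presence of a ray of $X_G(V)$ with support $Z$ is decisive: its defining vector $r$ lies in the recession cone of $X_G(V)$, so $r_\ell\le r_i$ whenever $\ell\in\supp(v^{(j)})\setminus\{i\}$, and since $r_\ell>0$ exactly for $\ell\in Z$ this forces $\supp(v^{(j)})\cap Z=\emptyset$, i.e.\ $j\in M$; in this case $\pi_Z\bigl(S_i(v^{(j)})\bigr)=S_i(y^{(j)})$ word for word. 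Intersecting over $(i,j)\in G$ yields $\pi_Z\bigl(X_G(V)\bigr)\subseteq\bigcap_{(i,j)\in H}S_i(y^{(j)})=X_H(Y)$. For the reverse inclusion I would lift a given $z\in X_H(Y)$ by keeping its $([d]\setminus Z)$-coordinates and choosing the $Z$-coordinates large: those coordinates are constrained only by inequalities among themselves, which are realized by two-arc paths in $\Gamma(W\#G)$; the latter has no negative cycle because $Q(W\#G)=F_G(W)$ is nonempty, so the corresponding weighted digraph has no negative cycle and Lemma~\ref{lemma:feasible} gives a feasible assignment, which a large enough common shift makes satisfy the remaining lower bounds as well. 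This proves $b^{(Z)}+X_G(V)=X_H(Y)$, and tracking the construction shows that $H$ is the covector graph of this cell in $\typedecomp{Y}$. (Alternatively one writes $X_G(V)=\pi_{[n]}(F_G(W))$, computes $\pi_{Z\cup[n]}$ through the Kleene star as in Lemma~\ref{lemma:projection_wdp}, and invokes Corollary~\ref{coro:inf_env} to see that the relevant shortest paths in $\Gamma(W\#G)$ never leave the subgraph induced on $([d]\setminus Z)\dcup M$, which coincides with the graph $\Gamma(W')$ of the matrix $W'$ built from $Y$ as in~\eqref{eq:W}.)

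\emph{The second assertion.} The inclusion $b^{(Z)}+X_G(V)\subseteq b^{(Z)}+\bigcap_{(i,j)\in G}\csec{i}{v^{(j)}}$ is immediate from $S_i(v^{(j)})\subseteq\csec{i}{v^{(j)}}$ together with $X_G(V)=\bigcap_{(i,j)\in G}S_i(v^{(j)})$. For the reverse inclusion, take $p\in\bigcap_{(i,j)\in G}\csec{i}{v^{(j)}}$ and let $T=\{i : p_i=\infty\}$. The crucial point is that $T\subseteq Z$: by Lemma~\ref{lem:sectors_boundary} the membership $p\in\csec{i}{v^{(j)}}$ forces $T$ to belong to the family $\cK$ attached to $S_i(v^{(j)})$, that is, $T\cap\supp(v^{(j)})=\emptyset$ or $i\in T$, and this must hold for \emph{every} arc $(i,j)\in G$; since $Z$ is the support of a maximal ray of $X_G(V)$ it is exactly the largest proper subset of $[d]$ satisfying this condition for all of $G$, whence $T\subseteq Z$. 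Once this is known, writing $p=b^{(T)}+x$ with $x\in S_i(v^{(j)})$ for each $(i,j)\in G$ shows that the finite part $p|_{[d]\setminus Z}$ lies in $\pi_Z\bigl(S_i(v^{(j)})\bigr)$, and the sector-by-sector computation of the first part then places $p|_{[d]\setminus Z}$ in $\bigcap_{(i,j)\in H}S_i(y^{(j)})=X_H(Y)=b^{(Z)}+X_G(V)$, as desired.

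\emph{The main obstacle.} The step I expect to be genuinely delicate is the claim $T\subseteq Z$: this is precisely where one must use that $Z$ is the support not of an arbitrary recession direction of $X_G(V)$ but of a maximal one (equivalently, that $\chi(Z)$ generates the deepest direction to infinity compatible with the graph $G$), since a smaller $Z$ would leave room for points of $\bigcap_{(i,j)\in G}\csec{i}{v^{(j)}}$ lying in still deeper boundary strata that $b^{(Z)}+X_G(V)$ cannot reach. Everything else — the projection formula for a single sector, the feasibility of the lifting, and (on the alternative route) the identification of the induced subgraph of $\Gamma(W\#G)$ with the graph attached to $Y$ — is routine once Proposition~\ref{prop:projection_face_sector} and Corollary~\ref{coro:inf_env} are in hand.
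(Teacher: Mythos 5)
For the first claim your main argument is correct but takes a different route from the paper's. You project sector by sector and then close the gap between $\pi_Z\bigl(\bigcap_{(i,j)\in G}S_i(v^{(j)})\bigr)$ and $\bigcap_{(i,j)\in G}\pi_Z\bigl(S_i(v^{(j)})\bigr)$ by an explicit lifting (solving the difference constraints among the $Z$-coordinates, which is possible since $X_G(V)\neq\emptyset$, and then shifting them up to meet the remaining lower bounds). The paper instead stays entirely inside the weighted-digraph formalism: it writes $X_G(V)=\pi_{[n]}(F_G(W))$, reduces to minimal generators of the pointed part and lineality generators, applies Corollary~\ref{coro:inf_env} with $D''=Z$ to see that the relevant shortest paths never leave $([d]\setminus Z)\dcup M$, and concludes with Lemma~\ref{lemma:projection_wdp}; your parenthetical ``alternative route'' is exactly this. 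Your key combinatorial observation --- that a ray of support $Z$ forces $j\in M$ for every arc $(i,j)\in G$ with $i\notin Z$, so that $H$ carries all surviving constraints --- is correct and underlies both routes. Your version is more elementary; the paper's is shorter because the projection lemma has already done the lifting once and for all.

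For the second claim there is a genuine gap, and you have located it precisely but resolved it by changing the hypothesis. The lemma assumes only that $X_G$ contains \emph{a} ray with support $Z$; your argument for $T\subseteq Z$ uses that $Z$ is the support of a \emph{maximal} recession direction. Without that, $T\subseteq Z$ fails: if $X_G$ has another ray with support $Z'\not\subseteq Z$, then $Z'$ belongs to every family $\cK$ attached to a sector $S_i(v^{(j)})$ with $(i,j)\in G$ (its characteristic vector lies in each recession cone), so $b^{(Z')}+x$ with $x\in X_G(V)$ lies in $\bigcap_{(i,j)\in G}\csec{i}{v^{(j)}}$, and adding $b^{(Z)}$ sends it into the stratum indexed by $Z\cup Z'\supsetneq Z$, outside $b^{(Z)}+X_G(V)$. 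The paper dismisses this claim as ``merely a reformulation'' of the definition of $\csec{i}{u}$, which is coherent only if one reads the right-hand side as the portion of $\bigcap_{(i,j)\in G}\csec{i}{v^{(j)}}$ lying in the stratum $\TPmin^{d-1}(Z)$, i.e.\ as the assertion that a point with infinity set exactly $Z$ lies in the intersection of the compactified sectors if and only if its finite part lies in $X_H(Y)$. Under that reading no maximality is needed: $Z$ lies in each $\cK$ because it supports a recession vector of each $S_i(v^{(j)})$, $b^{(Z)}+S_i(v^{(j)})$ depends only on $\pi_Z(S_i(v^{(j)}))$, and your first-part computation finishes the argument. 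You should either prove the claim in that form or state the maximality assumption explicitly; as written, your proof establishes a statement different from the one in the lemma.
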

\begin{proof}
The second claim is merely a reformulation with the definition of $\csec{i}{u}$.

The first claim follows if we show that
\begin{equation}\label{eq:proj_boundary}
  \pi_Z(X_G(V)) \ = \ X_{H}(Y)
\end{equation}
where $\pi_Z$ is the projection onto the coordinates in $[d] \setminus Z$.
 Since any ray is generated by the minimal generators of the pointed part of the recession cone and the generators of the lineality space, at first we assume that $Z$ is the support of a minimal generator of the pointed part of the recession cone. Setting $D'' = Z$ in Corollary~\ref{coro:inf_env} yields that every shortest path is already defined on $([d]\setminus Z) \dcup M$. Furthermore, the support of a generator of the lineality space is given by a weak component by Lemma~\ref{lemma:recession_cone} what implies the same statement about the shortest paths for those generators. 

Summarizing, equation~\eqref{eq:proj_boundary} follows with Lemma~\ref{lemma:projection_wdp}.
\end{proof}

\begin{theorem}\label{thm:covector_decomposition}
  The union of the covector decompositions induced by the matrices $V(Z)$ where $Z$ ranges over all proper
  subsets of $[d]$ yields a piecewise linear decomposition of $\TPmin^{d-1}$.
\end{theorem}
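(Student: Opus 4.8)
The plan is to check the two defining properties of a decomposition — that the cells cover $\TPmin^{d-1}$, and that distinct cells meet face-to-face (equivalently, have pairwise disjoint relative interiors) — together with piecewise linearity. Coverage is the easy half. The tropical projective space is the disjoint union $\TPmin^{d-1}=\bigsqcup_{Z\subsetneq[d]}\TPmin^{d-1}(Z)$ of its boundary strata, and a point $p\in\TPmin^{d-1}$ lies in the stratum indexed by its set $Z=\{i : p_i=\infty\}$ of infinite coordinates. By construction $\typedecomp{V(Z)}$ is an honest polyhedral complex on $\TPmin^{d-1}(Z)\cong\RR^{d-1-|Z|}$, so it covers this stratum, and within one stratum its cells already meet face-to-face. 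Hence everything reduces to compatibility of cells coming from different strata.

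First I would record that, since a coordinate can only pass to $\infty$ and never return in a limit, the closure in $\TPmin^{d-1}$ of a cell of $\typedecomp{V(Z)}$ can only meet strata $\TPmin^{d-1}(Z')$ with $Z\subseteq Z'$. So the task is: for $Z\subseteq Z'\subsetneq[d]$ and a cell $C$ of $\typedecomp{V(Z)}$, show that $\overline C\cap\TPmin^{d-1}(Z')$ is a cell of $\typedecomp{V(Z')}$. Because the closure of the stratum $\TPmin^{d-1}(Z)$ in $\TPmin^{d-1}$ is itself a copy of a tropical projective space, it suffices to treat $Z=\emptyset$, i.e.\ to understand the compactification of a cell $X_G(V)\subseteq\RR^d/\RR\1$. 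Here $X_G(V)=\bigcap_{(i,j)\in G}S_i(v^{(j)})$, and Lemma~\ref{lem:sectors_boundary} identifies the closure of each sector as $\csec{i}{v^{(j)}}$, which already gives $\overline{X_G(V)}\subseteq\bigcap_{(i,j)\in G}\csec{i}{v^{(j)}}$. The substance — the reverse inclusion and the identification of each stratum piece — is exactly Lemma~\ref{lemma:cell_covector_boundary}: its proof uses Corollary~\ref{coro:inf_env} to see that, in $\Gamma(W\#G)$, no shortest path between two of the surviving finite nodes runs through a node being sent to infinity, so that the orthogonal projection $\pi_{Z'}(X_G(V))$ is precisely the weighted digraph polyhedron $X_H(V(Z'))$ by Lemma~\ref{lemma:projection_wdp}, with a combinatorially explicit covector graph $H$. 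Thus each boundary piece $b^{(Z')}+(\text{a face of }X_G(V))$ of $\overline{X_G(V)}$ coincides with a cell of $\typedecomp{V(Z')}$.

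Granting this, face-to-faceness across strata follows from face-to-faceness inside each stratum: for cells $X_G(V),X_{G'}(V)$ of $\typedecomp{V}$ we have $X_G(V)\cap X_{G'}(V)=X_{G\cup G'}(V)$ by Remark~\ref{rem:type_relations}, and intersecting closures and restricting to a stratum $\TPmin^{d-1}(Z')$ turns this — using the identification above together with a short limiting argument showing that restriction to a stratum commutes with finite intersections of these closed cells — into the analogous identity $X_H(V(Z'))\cap X_{H'}(V(Z'))=X_{H\cup H'}(V(Z'))$ inside that stratum. The same reasoning handles cells coming from two nested strata $Z_1\subseteq Z_2$, once one checks the (easy) identity $V(Z_1)(Z_2\setminus Z_1)=V(Z_2)$ up to relabeling. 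Piecewise linearity is then routine: each cell of each $\typedecomp{V(Z)}$, and each of its boundary pieces, is a polyhedron in every standard affine chart on the simplex modelling $\TPmin^{d-1}$, since the defining inequalities $x_\ell-x_i\leq u_\ell-u_i$ of the sectors stay affine there and the $b^{(K)}$-translates only delete and freeze coordinates.

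I expect the main obstacle to be the bookkeeping that glues the lemmas together: verifying that passing to the closure in $\TPmin^{d-1}$ commutes with finite intersections of the cells $X_G(V)$ and with restriction to a boundary stratum, and that the induced matrices compose correctly so that the reduction to $Z=\emptyset$ is legitimate. All of the geometric content is already packaged in Lemmas~\ref{lem:sectors_boundary} and~\ref{lemma:cell_covector_boundary} and Corollary~\ref{coro:inf_env}; what remains is the combinatorial assembly into the assertion that the union of the $\typedecomp{V(Z)}$ is a single cell complex on $\TPmin^{d-1}$.
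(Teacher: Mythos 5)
Your proposal is correct and follows essentially the same route as the paper: both arguments reduce, via the stratification of $\TPmin^{d-1}$ and the lower-dimensional strata being tropical projective spaces themselves (the paper phrases this as an induction on $d$), to understanding the closure of a single cell $X_G(V)$, with Lemma~\ref{lemma:cell_covector_boundary} (resting on Corollary~\ref{coro:inf_env} and Lemma~\ref{lemma:projection_wdp}) supplying the identification of each boundary piece with a cell of $\typedecomp{V(Z')}$. The remaining bookkeeping you flag (compatibility of intersections across strata) is handled in the paper with the same brevity.
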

If the graph $\Bgraph(V)$ is weakly connected, then by Lemma~\ref{lemma:recession_cone} the intersection poset generated by the sets $\smallSetOf{\csec{i}{u}}{u_i\neq\infty}$ contains a $0$-dimensional cell, whence that piecewise linear decomposition of $\TPmin^{d-1}$ is a cell complex.
\begin{proof}
  By definition as the common refinement of polyhedral complexes the covector decomposition of $\RR^d/\RR\1$ induced by
  $V$ is a polyhedral complex.  The bounded cells are polytopes and therefore homeomorphic to closed balls.  We need to
  check that the topology works out right for those cells which are unbounded in $\RR^d/\RR\1$.  This is gotten from an
  induction on~$d$ as follows.  In the base case $d=1$ there is nothing to show since the tropical projective torus
  $\RR^1/\RR\1$ is a single point.  For $d\geq 2$, by induction, we may assume that the covector decomposition induced
  on the closure
  \[
  \SetOf{(p_1,p_2,\dots,p_d)\in\TPmin^{d-1}}{p_i=\infty \text{ if } i\in Z} \enspace,
  \]
  of $\TPmin^{d-1}(Z)$ yields a cell decomposition if $Z$ is not empty.  Now consider $Z = \emptyset$ and let $X_G(V)$ be an unbounded cell with covector $G=(G_1,G_2,\dots,G_d)$.
By Lemma~\ref{lemma:cell_covector_boundary}, the closure of $X_G(V)$ in $\TPmin^{d-1}$ is the union of $X_G(V)$ with all the
  cells $X_H(V(Z'))$ where $Z'$ ranges over the supports of the rays contained in $X_G(V)$.
  Here $H$ is the covector which $G$ induces on $\TPmin^{d-1}(Z')$ by omitting those $G_i$ with $i \in Z'$; this union is homeomorphic with a ball.
 The same argument also shows that intersections of cells are unions of cells.
\end{proof}
By construction one can apply Lemma~\ref{lemma:sectors_tropical_hull} also to the cells in the boundary of the tropical projective space to check for containment in $\tcone(V)$. Consider $z \in \TT^d$ and let $\supp(z) = \smallSetOf{i \in [d]}{z_i \neq \infty}$ be its support.

\begin{corollary}\label{coro:sectors_projective_hull}
  The point $z$ is contained in $\tcone(V)$ if and only if for every $i \in \supp(z)$ there is an index $s \in [n]$ with $z \in \csec{i}{v^{(s)}}$ and $\supp(v^{(s)})\subseteq\supp(z)$.
  A point $z \in \TT^d$ is contained in $\tcone(V)$ if and only if for every $i \in [d]$ there is an index $s \in [n]$ with $z \in \csec{i}{v^{(s)}}$.
\end{corollary}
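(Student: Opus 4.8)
The plan is to reduce the statement to the Tropical Farkas Lemma for finite vectors, Lemma~\ref{lemma:sectors_tropical_hull}, applied inside a boundary stratum. Put $Z=[d]\setminus\supp(z)$ and $M=\SetOf{j\in[n]}{\supp(v^{(j)})\subseteq\supp(z)}$, so that $V(Z)$ is the submatrix of $V$ with rows $\supp(z)$ and columns $M$. First I would check that $z\in\tcone(V)$ if and only if the finite vector $z|_{\supp(z)}\in\RR^{\supp(z)}$ lies in $\tcone(V(Z))$. In a representation $z=\bigoplus_j\lambda_j\odot v^{(j)}$ the coordinate equation for $i\in Z$ reads $\min_j(\lambda_j+v_{ij})=\infty$, forcing $v_{ij}=\infty$ whenever $\lambda_j<\infty$; hence $\lambda_j<\infty$ implies $j\in M$, and restricting the equations for $i\in\supp(z)$ to the columns in $M$ yields a representation of $z|_{\supp(z)}$ over $V(Z)$. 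Conversely, extending any such representation by $\lambda_j=\infty$ for $j\notin M$ recovers $z$, since each $v^{(j)}$ with $j\in M$ is $\infty$ on every row in $Z$. As $V$ has no all-$\infty$ column, neither does $V(Z)$, so Lemma~\ref{lemma:sectors_tropical_hull} applied to $V(Z)$ gives: $z|_{\supp(z)}\in\tcone(V(Z))$ iff for every $i\in\supp(z)$ there is a column $s\in M$ with $v^{(s)}_i\neq\infty$ and $z|_{\supp(z)}\in S_i(v^{(s)}|_{\supp(z)})$, the sector being taken inside $\RR^{\supp(z)}$.

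The technical step is to identify, for $s\in M$ and $i\in\supp(z)$ with $v^{(s)}_i\neq\infty$, the membership $z|_{\supp(z)}\in S_i(v^{(s)}|_{\supp(z)})$ in $\RR^{\supp(z)}$ with $z\in\csec{i}{v^{(s)}}$ in $\TPmin^{d-1}$. Since $\supp(v^{(s)})\subseteq\supp(z)$, the inequalities \eqref{eq:sector} defining $S_i(v^{(s)})$ involve only coordinates in $\supp(z)$, so $S_i(v^{(s)})\subseteq\RR^d$ is the cylinder over $S_i(v^{(s)}|_{\supp(z)})$ in the $Z$-directions; and because $Z\subseteq[d]\setminus\supp(v^{(s)})$, the set $Z$ belongs to $\cB$ for $u=v^{(s)}$, so $Z\in\cK$ and $b^{(Z)}+S_i(v^{(s)})$ is one of the pieces making up $\csec{i}{v^{(s)}}$. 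A point whose support is exactly $\supp(z)$ can meet the piece $b^{(K)}+S_i(v^{(s)})$ only for $[d]\setminus K=\supp(z)$, that is $K=Z$, and it lies in $b^{(Z)}+S_i(v^{(s)})$ precisely when its restriction to $\supp(z)$ extends to a point of $S_i(v^{(s)})$, i.e. precisely when $z|_{\supp(z)}\in S_i(v^{(s)}|_{\supp(z)})$. This is also the content of Lemma~\ref{lem:sectors_boundary} together with the projection formula \eqref{eq:proj_boundary} of Lemma~\ref{lemma:cell_covector_boundary}. Combined with the first step, this gives the first assertion of the corollary.

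For the second assertion I would argue that enlarging the quantifier to all $i\in[d]$, while dropping the constraint $\supp(v^{(s)})\subseteq\supp(z)$, changes nothing. For $i\in\supp(z)$, the bare membership $z\in\csec{i}{v^{(s)}}$ already forces $Z\in\cK$ for $v^{(s)}$; writing $Z=A\cup B$ with $A\in\cA$, $B\in\cB$, we must have $A=\emptyset$ because $i\notin Z\supseteq A$, so $Z=B\in\cB$, i.e. $\supp(v^{(s)})\subseteq\supp(z)$ — hence the two clauses agree. For $i\notin\supp(z)$ the clause is satisfied by any column with $v^{(s)}_i\neq\infty$: then $i\in\supp(v^{(s)})$, the splitting $Z=\bigl((Z\cap\supp(v^{(s)}))\cup\{i\}\bigr)\cup\bigl(Z\setminus\supp(v^{(s)})\bigr)$ certifies $Z\in\cK$, and $z\in b^{(Z)}+S_i(v^{(s)})$ because in \eqref{eq:sector} the coordinate $x_i$ occurs only with lower bounds, so any finite lift of $z|_{\supp(z)}$ completes to a point of $S_i(v^{(s)})$ once $x_i$ is chosen large enough. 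Thus, provided $V$ has no all-$\infty$ row (so that the sectors $\csec{i}{v^{(s)}}$ occurring are defined), the clauses for $i\notin\supp(z)$ are vacuous and the second assertion follows from the first.

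The main obstacle I expect is the bookkeeping in the technical step: deciding which boundary piece $b^{(K)}+S_i(u)$ of $\csec{i}{u}$ a point of prescribed support can lie in, and matching this with the projection $\pi_Z$ of the sector, where the families $\cA$, $\cB$, $\cK$ have to be tracked with care. Once this is set up, everything else is the short computation of the first step plus one application of Lemma~\ref{lemma:sectors_tropical_hull}.
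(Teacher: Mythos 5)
Your proof is correct and follows essentially the same route as the paper, which justifies the corollary with a single sentence — that Lemma~\ref{lemma:sectors_tropical_hull} applies ``by construction'' to the boundary strata via the matrices $V(Z)$ — and your argument is precisely that reduction carried out in detail, including the bookkeeping with $\cA$, $\cB$, $\cK$ that identifies sector membership in the stratum with membership in $\csec{i}{v^{(s)}}$. Your proviso that $V$ have no all-$\infty$ row for the second assertion is a legitimate (and, strictly speaking, needed) refinement of the statement rather than a defect of your argument.
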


\begin{example}
  Let
  \[
  V' \ = \ \begin{pmatrix} 0 & 0 & 0 & 0 & \infty & \infty \\ 1 & 0 & \infty & \infty & 0 & \infty \\ 2 & -1 &
    \infty & \infty & \infty & 0 \end{pmatrix} \enspace ,
  \]
  where $d=3$ and $n=6$.
  The third and fourth columns of $V'$ are the same.
  Notice that the first three columns correspond to the matrix $V$ from Example~\ref{ex:type_decomposition}.
  With $Z=\{1\}$ we obtain the matrix
  \[\BAtablenotesfalse
  \begin{blockarray}{*3{>{\scriptstyle}{c}}}
    \begin{block}{\Left{$V'(Z)=$}{(}cc)>{\scriptstyle}c}
      0 & \infty & 2 \\
      \infty & 0 & 3 \\
    \end{block}
    5 & 6 &
  \end{blockarray} \enspace ,
  \]
  where we keep the original row and column labels.  
  The one-dimensional tropical projective torus $B=\TPmin^2(\{1\})$ is trivially subdivided; its covector reads $(\bullet,5,6)$. To denote cells in the boundary we use the symbol $\bullet$ at the component corresponding to an apex to mark if the cell is in a common boundary stratum with this apex.
  The union of the $1$-dimensional ball $B$ and the unbounded cell in $\RR^3/\RR\1$ with covector $(1234,-,-)$ yields the $2$-dimensional cell with covector $(1234,5,6)$ in the covector decomposition of $\TPmin^2$ induced by $V'$; see Figure~\ref{fig:signed_cells} and compare with Figure~\ref{figure:tropical_types}.
\end{example}

Notice that, while the tropical projective torus works for min and max alike, the definition of the tropical projective space does
depend on the choice of the tropical addition.

\begin{figure}[pth]
  \centering
  \resizebox{0.6\textwidth}{!}{\inclpic{ex_tropical_halfspace_decompo1}}

  \vspace*{1cm}

  \resizebox{0.6\textwidth}{!}{\inclpic{ex_tropical_halfspace_decompo2}}
  \caption{Covector decomposition (top) and signed cell decomposition (bottom) in the tropical projective plane}
  \label{fig:signed_cells}
\end{figure}

\subsection{Arrangements of  tropical halfspaces}

So far we associated with a matrix $V\in\TT_{\min}^{d\times n}$ the covector decompositions of $\RR^d$ and
$\TPmin^{d-1}$, respectively, and Theorem~\ref{thm:envelope} describes the min-tropical cone $\tcone(V)$ as a union
of their cells.  Choose a subgraph $\Psi$ of the complete bipartite graph $[d]\times[n]$ (with arcs directed from $[d]$
to $[n]$) as in \eqref{eq:section_complex}. This gives rise to the max-tropical cone $\thalf(V,\Psi) = \bigcap_{j \in
  [n]} \bigcup_{(i,j) \in \Psi} S_{i}(v^{(j)})$, which again is a union of cells from the same covector decomposition.  Here
we want to describe yet another cell decomposition of $\RR^d$ (or $\TPmin^{d-1}$), which was introduced in
\cite[\S3.2]{ABGJ-Simplex:A}.

For this, we introduce the max-tropical cone with boundary 
\[
\olthalf(V,\Psi) \ = \ \bigcap_{j \in [n]} \bigcup_{(i,j) \in \Psi} \csec{i}{v^{(j)}} \enspace .
\]

For a vector $\epsilon \in \{\pm\}^n$ of $n$ signs we consider the directed bipartite graph
\[
\signedcell{\Psi}{\epsilon} \ = \ \SetOf{(i,j)\in[d]\times[n]}{\bigl((i,j)\in\Psi \text{ and } \epsilon_j=+\bigr) \text{ or }
  \bigl((i,j)\not\in\Psi \text{ and } \epsilon_j=-\bigr)} \enspace .
\]

The construction of $\signedcell{\Psi}{\epsilon}$ from $\Psi$ amounts to taking the complementary arcs incident to each
node $j \in [n]$ with $\epsilon_j = -$.  We call the max-tropical cone $\olthalf(V,\signedcell{\Psi}{\epsilon})$ the
\emph{inversion} of $\olthalf(V,\Psi)$ with respect to $\epsilon$.  As a subset of $\TPmin^{d-1}$ the inversion may be empty
or not.  In the latter case $\olthalf(V,\Psi_\epsilon)$ is the \emph{signed cell} with respect to $V$, $\Psi$ and
$\epsilon$.  Each generic point, i.e., a point which does not lie on any of the max-tropical hyperplanes whose apices
are columns of $V$, is contained in a unique signed cell.  The trivial inversion with respect to
$\epsilon={+}{+}\cdots{+}$ is the tropical cone $\olthalf(V,\Psi)$ itself.  
Each signed cell is a union of cells of the
covector decomposition.  So Theorem~\ref{thm:covector_decomposition} together with Proposition~\ref{prop:graphs_types_cells} entails the following.

\begin{corollary}\label{cor:signed_cell_decomposition}
  The signed cells $\olthalf(V,\signedcell{\Psi}{\epsilon}) / \RR\1$, where $\epsilon$ ranges over all choices of sign vectors,
  generate a piecewise linear decomposition of $\TPmin^{d-1}$.

  Furthermore, a cell with graph $G$ in the covector decomposition of $\TPmin^{d-1}$ by $V$ is contained in a cell $\olthalf(V,\signedcell{\Psi}{\epsilon})$ if and only if $\Psi_{\epsilon} \cap G$ has no isolated node.
\end{corollary}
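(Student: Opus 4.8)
The plan is to obtain both statements from Theorem~\ref{thm:covector_decomposition}, which provides the covector decomposition of $\TPmin^{d-1}$, together with the matching-theoretic bookkeeping of Proposition~\ref{prop:graphs_types_cells}, transported from the sectors $S_i(u)$ in $\RR^d$ to their compactifications $\csec{i}{u}$ in $\TPmin^{d-1}$. For the equivalence in the second assertion I would argue as in the remark following Proposition~\ref{prop:graphs_types_cells}: by Lemma~\ref{lem:sectors_boundary} each $\csec{i}{v^{(j)}}$ is the closure in $\TPmin^{d-1}$ of $S_i(v^{(j)})$, and by Lemma~\ref{lemma:cell_covector_boundary} a cell of the covector decomposition of $\TPmin^{d-1}$ with covector graph $G$ equals $\bigcap_{(i,j)\in G}\csec{i}{v^{(j)}}$. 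Reordering the intersection over $j\in[n]$ and the union over the arcs of $\signedcell{\Psi}{\epsilon}$ in the definition of $\olthalf(V,\signedcell{\Psi}{\epsilon})$, exactly as in the proof of Proposition~\ref{prop:graphs_types_cells}, and using $X_H\subseteq X_G$ whenever $G\subseteq H$, gives
\[
\olthalf(V,\signedcell{\Psi}{\epsilon}) \ = \ \bigcup_{H}\ \bigcap_{(i,j)\in H}\csec{i}{v^{(j)}}\ ,
\]
where $H$ runs over all subgraphs of $[d]\times[n]$ in which no node of $[n]$ is isolated in $\signedcell{\Psi}{\epsilon}\cap H$. Hence $X_G\subseteq\olthalf(V,\signedcell{\Psi}{\epsilon})$ precisely when no node of $[n]$ is isolated in $\signedcell{\Psi}{\epsilon}\cap G$: forward containment is the displayed union, and the converse follows by inspecting a generic point of $X_G$, which by Corollary~\ref{coro:sectors_projective_hull} lies in exactly one $\csec{i}{v^{(j)}}$ per column $j$, so if that arc is missing from $\signedcell{\Psi}{\epsilon}$ then the $j$-th factor of $\olthalf(V,\signedcell{\Psi}{\epsilon})$ is missed.

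For the first assertion I would first observe that in the covector graph $G$ of any cell of the covector decomposition of $\TPmin^{d-1}$ every node of $[n]$ has degree at least one, since every point of $\RR^d$ (respectively of a boundary stratum) lies in some sector of each column by Lemma~\ref{lemma:sectors_tropical_hull} (respectively Corollary~\ref{coro:sectors_projective_hull}). Choosing for each $j\in[n]$ an arc $(i_j,j)\in G$ and setting $\epsilon_j=+$ if $(i_j,j)\in\Psi$ and $\epsilon_j=-$ otherwise, the intersection $\signedcell{\Psi}{\epsilon}\cap G$ contains $\{(i_j,j) : j\in[n]\}$, so $X_G\subseteq\olthalf(V,\signedcell{\Psi}{\epsilon})$ by the previous paragraph, and this inversion is nonempty, hence a signed cell. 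Thus the signed cells cover $\TPmin^{d-1}$, and their disjointness on generic points is the fact already recorded above: a generic point determines its per-column sector indices $(\sigma_1,\dots,\sigma_n)$ uniquely, after which the only admissible sign is $\epsilon_j=+$ when $(\sigma_j,j)\in\Psi$ and $\epsilon_j=-$ otherwise.

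Finally, each signed cell is a union of cells of the covector decomposition of $\TPmin^{d-1}$; that decomposition is piecewise linear by Theorem~\ref{thm:covector_decomposition}, and the intersection of two signed cells is again a union of covector cells because the covector decomposition is a polyhedral complex. Together with the coverage just established this shows that the signed cells generate a piecewise linear decomposition of $\TPmin^{d-1}$, completing the first assertion. I expect the one delicate point to be the transport of Proposition~\ref{prop:graphs_types_cells} to the compactified sectors in the first paragraph: one must verify that passing to a boundary stratum neither creates nor destroys incidences between cells and sectors, which is exactly what Lemma~\ref{lemma:cell_covector_boundary}, together with Corollary~\ref{coro:inf_env}, provides.
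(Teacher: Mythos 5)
Your proposal is correct and follows essentially the same route as the paper, which derives the corollary directly from Theorem~\ref{thm:covector_decomposition} together with Proposition~\ref{prop:graphs_types_cells} after observing that each signed cell is a union of covector cells; you have simply spelled out the reordering and coverage arguments that the paper leaves implicit. One small imprecision: a relative-interior point of a lower-dimensional cell $X_G$ lies in $\csec{i}{v^{(j)}}$ for \emph{every} $i$ with $(i,j)\in G$, not in exactly one per column, but this does not affect your conclusion that the $j$-th factor is missed whenever $j$ is isolated in $\signedcell{\Psi}{\epsilon}\cap G$.
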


The decomposition into signed cells is a tropical analogue of the decomposition into polyhedral cells defined by an
ordinary affine hyperplane arrangement.
As in Theorem~\ref{thm:covector_decomposition} that piecewise linear decomposition is a cell complex, provided that $\Bgraph(V)$ is weakly connected.

\begin{example}
  Figure~\ref{fig:signed_cells} shows the signed cell decomposition of $\TPmin^2$ induced by the matrix $V$ from
  Example~\ref{ex:type_decomposition} with the extra columns $\transpose{(\infty,0,\infty)}$ and $\transpose{(\infty,\infty,0)}$ and the directed bipartite graph $\Psi\subset\{1,2,3\}\times\{1,2,3,4,5\}$ with the six directed
  edges $(1,1),(2,1),(3,2),(1,3),(2,4),(3,5)$. 
The six signed cells correspond to the sign vectors ${+}{+}{+}{+}{+}{+}$, ${-}{+}{+}{+}{+}{+}$ and
  ${+}{-}{+}{+}{+}{+}$.  The remaining $29$ inversions are empty. Finally, the three inversions ${-}{+}{\bullet}{\bullet}{\bullet}{+}$, ${+}{-}{\bullet}{\bullet}{+}{\bullet}$ and ${+}{-}{+}{+}{\bullet}{\bullet}$ form a decomposition of the boundary of the tropical projective plane. 
\end{example}

\section{Concluding remarks}
\noindent
Tropical point configurations, or rather the dual tropical hyperplane arrangements, were generalized to `tropical
oriented matroids' by Ardila and Develin \cite{ArdilaDevelin:2009}.  Horn showed that the latter are equivalent to
subdivisions of a product of simplices which are not necessarily regular~\cite{Horn:2012}.  The tangent digraph discussed
in Remark~\ref{rem:tangent_digraph} also makes sense in the tropical oriented matroid setting.  That graph is the
crucial combinatorial device for the pivoting operation in the tropical simplex algorithm~\cite{ABGJ-Simplex:A}.
\begin{problem}
  Give an oriented matroid version of the tropical simplex algorithm.
\end{problem}

It is worth noting that the axioms for tropical oriented matroids given in \cite{ArdilaDevelin:2009} generalize the
combinatorics of tropical convexity with finite coordinates only.
\begin{problem}
  Generalize the axioms of tropical oriented matroids to cover point configurations or hyperplane arrangements in the
  tropical projective space.
\end{problem}
In view of Theorem~\ref{thm:envelope} and the results in~\cite{Horn:2012} this might be related not necessarily regular
subdivisions of subpolytopes of products of simplices.

\begin{problem}
  How are the signed cell decompositions related to tropical oriented matroids?
\end{problem}

\section*{Acknowledgment}
We are indebted to Xavier Allamigeon, Federico Ardila, Peter Butkovi\v{c}, Veit Wiechert and two anonymous referees for several valuable hints.

\goodbreak

%\section*{References}
\bibliographystyle{amsplain}
\bibliography{main}

\providecommand{\bysame}{\leavevmode\hbox to3em{\hrulefill}\thinspace}
\providecommand{\MR}{\relax\ifhmode\unskip\space\fi MR }
% \MRhref is called by the amsart/book/proc definition of \MR.
\providecommand{\MRhref}[2]{%
  \href{http://www.ams.org/mathscinet-getitem?mr=#1}{#2}
}
\providecommand{\href}[2]{#2}
\begin{thebibliography}{10}

\bibitem{AkianGaubertGutermann12}
Marianne Akian, St{\'e}phane Gaubert, and Alexander Guterman, \emph{Tropical
  polyhedra are equivalent to mean payoff games}, Internat. J. Algebra Comput.
  \textbf{22} (2012), no.~1, 1250001, 43. \MR{2900854}

\bibitem{ABGJ-Simplex:A}
Xavier Allamigeon, Pascal Benchimol, St\'ephane Gaubert, and Michael Joswig,
  \emph{Tropicalizing the simplex algorithm}, SIAM J. Discrete Math.
  \textbf{29} (2015), no.~2, 751--795.

\bibitem{AGG:2013}
Xavier Allamigeon, St{\'e}phane Gaubert, and {\'E}ric Goubault, \emph{Computing
  the vertices of tropical polyhedra using directed hypergraphs}, Discrete
  Comput. Geom. \textbf{49} (2013), no.~2, 247--279. \MR{3017909}

\bibitem{AllamigeonKatz:1408.6176}
Xavier Allamigeon and Ricardo~D. Katz, \emph{Tropicalization of facets of
  polytopes}, 2014, preprint
  \href{http://arxiv.org/abs/1408.6176}{arXiv:1408.6176}.

\bibitem{ArdilaDevelin:2009}
Federico Ardila and Mike Develin, \emph{Tropical hyperplane arrangements and
  oriented matroids}, Math. Z. \textbf{262} (2009), no.~4, 795--816.
  \MR{2511751 (2010i:52031)}

\bibitem{Butkovic:10}
Peter Butkovi{\v{c}}, \emph{Max-linear systems: theory and algorithms},
  Springer Monographs in Mathematics, Springer-Verlag London, Ltd., London,
  2010. \MR{2681232 (2011e:15049)}

\bibitem{CohenGaubertQuadrat:2004}
Guy Cohen, St{\'e}phane Gaubert, and Jean-Pierre Quadrat, \emph{Duality and
  separation theorems in idempotent semimodules}, Linear Algebra Appl.
  \textbf{379} (2004), 395--422, Tenth Conference of the International Linear
  Algebra Society. \MR{2039751 (2005e:46007)}

\bibitem{Triangulations}
Jes{\'u}s~A. De~Loera, J{\"o}rg Rambau, and Francisco Santos,
  \emph{Triangulations}, Algorithms and Computation in Mathematics, vol.~25,
  Springer-Verlag, Berlin, 2010, Structures for algorithms and applications.
  \MR{2743368 (2011j:52037)}

\bibitem{DevelinSturmfels:2004}
Mike Develin and Bernd Sturmfels, \emph{Tropical convexity}, Doc. Math.
  \textbf{9} (2004), 1--27 (electronic), erratum \textit{ibid.}, pp.~205--206.
  \MR{2054977 (2005i:52010)}

\bibitem{DevelinYu:2007}
Mike Develin and Josephine Yu, \emph{Tropical polytopes and cellular
  resolutions}, Experiment. Math. \textbf{16} (2007), no.~3, 277--291.
  \MR{2367318 (2009j:52009)}

\bibitem{FinkRincon:1305.6329}
Alex Fink and Felipe Rinc{\'o}n, \emph{Stiefel tropical linear spaces}, J.
  Combin. Theory Ser. A \textbf{135} (2015), 291--331. \MR{3366480}

\bibitem{Gallai:58}
Tibor Gallai, \emph{Maximum-minimum {S}\"atze \"uber {G}raphen}, Acta Math.
  Acad. Sci. Hungar. \textbf{9} (1958), 395--434. \MR{0124238 (23 \#A1552b)}

\bibitem{GaubertKatz:11}
St{\'e}phane Gaubert and Ricardo~D. Katz, \emph{Minimal half-spaces and
  external representation of tropical polyhedra}, J. Algebraic Combin.
  \textbf{33} (2011), no.~3, 325--348. \MR{2772536 (2012c:52007)}

\bibitem{HerrmannJoswig:2008}
Sven Herrmann and Michael Joswig, \emph{Splitting polytopes}, M\"unster J.
  Math. \textbf{1} (2008), 109--141. \MR{2502496 (2010h:52015)}

\bibitem{HerrmannJoswigSpeyer14}
Sven Herrmann, Michael Joswig, and David~E. Speyer, \emph{Dressians, tropical
  {G}rassmannians, and their rays}, Forum Math. \textbf{26} (2014), no.~6,
  1853--1881. \MR{3334049}

\bibitem{Horn:2012}
Silke Horn, \emph{A topological representation theorem for tropical oriented
  matroids}, 24th {I}nternational {C}onference on {F}ormal {P}ower {S}eries and
  {A}lgebraic {C}ombinatorics ({FPSAC} 2012), Discrete Math. Theor. Comput.
  Sci. Proc., AR, Assoc. Discrete Math. Theor. Comput. Sci., Nancy, 2012,
  pp.~135--146. \MR{2957992}

\bibitem{MJ:2005}
Michael Joswig, \emph{Tropical halfspaces}, Combinatorial and computational
  geometry, Math. Sci. Res. Inst. Publ., vol.~52, Cambridge Univ. Press,
  Cambridge, 2005, pp.~409--431. \MR{2178330 (2006g:52012)}

\bibitem{JoswigKulas:2010}
Michael Joswig and Katja Kulas, \emph{Tropical and ordinary convexity
  combined}, Adv. Geom. \textbf{10} (2010), no.~2, 333--352. \MR{2629819
  (2011c:14162)}

\bibitem{Khachiyan:2008}
Leonid Khachiyan, Endre Boros, Konrad Borys, Khaled Elbassioni, and Vladimir
  Gurvich, \emph{Generating all vertices of a polyhedron is hard}, Discrete
  Comput. Geom. \textbf{39} (2008), no.~1-3, 174--190. \MR{2383757
  (2008m:05281)}

\bibitem{LamPostnikov:2007}
Thomas Lam and Alexander Postnikov, \emph{Alcoved polytopes. {I}}, Discrete
  Comput. Geom. \textbf{38} (2007), no.~3, 453--478. \MR{2352704 (2008m:52031)}

\bibitem{LitvinovMaslovShpiz:2001}
Grigory~L. Litvinov, Viktor~P. Maslov, and Grigory~B. Shpiz, \emph{Idempotent
  functional analysis. {A}n algebraic approach}, Mat. Zametki \textbf{69}
  (2001), no.~5, 758--797. \MR{1846814 (2002m:46113)}

\bibitem{TropicalBook}
Diane Maclagan and Bernd Sturmfels, \emph{Introduction to tropical geometry},
  AMS, 2015.

\bibitem{Mikhalkin:2006}
Grigory Mikhalkin, \emph{Tropical geometry and its applications}, International
  {C}ongress of {M}athematicians. {V}ol. {II}, Eur. Math. Soc., Z\"urich, 2006,
  pp.~827--852. \MR{2275625 (2008c:14077)}

\bibitem{PapadiCO}
Christos~H. Papadimitriou and Kenneth Steiglitz, \emph{Combinatorial
  optimization: algorithms and complexity}, Prentice-Hall, Inc., Englewood
  Cliffs, N.J., 1982. \MR{663728 (84k:90036)}

\bibitem{PostnikovReinerWilliams08}
Alex Postnikov, Victor Reiner, and Lauren Williams, \emph{Faces of generalized
  permutohedra}, Doc. Math. \textbf{13} (2008), 207--273. \MR{2520477
  (2010j:05425)}

\bibitem{Schrijver:CO:A}
Alexander Schrijver, \emph{Combinatorial optimization. {P}olyhedra and
  efficiency. {V}ol. {A}}, Algorithms and Combinatorics, vol.~24,
  Springer-Verlag, Berlin, 2003, Paths, flows, matchings, Chapters 1--38.
  \MR{1956924 (2004b:90004a)}

\bibitem{Sharir:81}
Micha Sharir, \emph{A strong-connectivity algorithm and its applications in
  data flow analysis}, Comput. Math. Appl. \textbf{7} (1981), no.~1, 67--72.
  \MR{593556 (82b:68058)}

\bibitem{SpeyerSturmfels04}
David Speyer and Bernd Sturmfels, \emph{The tropical {G}rassmannian}, Adv.
  Geom. \textbf{4} (2004), no.~3, 389--411. \MR{MR2071813 (2005d:14089)}

\bibitem{Stanley:1986}
Richard~P. Stanley, \emph{Two poset polytopes}, Discrete Comput. Geom.
  \textbf{1} (1986), no.~1, 9--23. \MR{824105 (87e:52012)}

\bibitem{Tran:2013}
Ngoc~Mai Tran, \emph{Enumerating polytropes}, 2014, preprint
  \href{http://arxiv.org/abs/1310.2012}{arXiv:1310.2012v3}.

\bibitem{Vorobyev67}
Nikolai~N. Vorobyev, \emph{Extremal algebra of positive matrices}, Elektron.
  Informationsverarb. und Kybernetik \textbf{3} (1967), 39--71, (in Russian).

\bibitem{Ziegler95}
G{\"u}nter~M. Ziegler, \emph{Lectures on polytopes}, Graduate Texts in
  Mathematics, vol. 152, Springer-Verlag, New York, 1995. \MR{1311028
  (96a:52011)}

\end{thebibliography}

\end{document}